\documentclass[12pt]{article}

\usepackage{amsmath, amssymb}
\usepackage{mathrsfs}   
\usepackage{geometry,graphicx}
\usepackage{tikz}
\usetikzlibrary{calc}
\usepackage{url}
\usepackage{enumerate}

\usepackage{amsthm}

\theoremstyle{plain}
\newtheorem{theorem}{Theorem}[section]
\newtheorem{conjecture}[theorem]{Conjecture}
\newtheorem{lemma}[theorem]{Lemma}
\newtheorem{proposition}[theorem]{Proposition}
\newtheorem{corollary}[theorem]{Corollary}

\newtheorem{question}[theorem]{Question}
\newtheorem{notation}[theorem]{Notation}

\theoremstyle{definition}
\newtheorem{definition}[theorem]{Definition}

\theoremstyle{remark}
\newtheorem{remark}[theorem]{Remark}
\newtheorem{example}[theorem]{Example}

\usepackage{mathtools}

\newcommand{\R}{\mathbb{R}}
\newcommand{\Fc}{\mathcal{F}}
\newcommand{\conv}{\operatorname{conv}}

\newcommand{\joincs}{\mathbin{\#}}
\newcommand{\pface}{\mathsf{\Pi}}
\newcommand{\cs}{\#_{\pface}}
\newcommand{\uset}{\operatorname{set}}

\DeclareMathOperator{\ri}{rint} 
\DeclareMathOperator{\aff}{aff}

\DeclarePairedDelimiter\floor{\lfloor}{\rfloor}
\DeclarePairedDelimiter\set{\{}{\}}
\DeclarePairedDelimiter\paren{(}{)}

\newcommand{\f}{\varphi}

\usepackage{xcolor}

\title{Equifacetted polytopes whose facets are products of simplices: The simplicial prism case}
\author{Federico Castillo, Guillermo Pineda-Villavicencio, José Alejandro Samper}
\date{\today}

\begin{document}

\maketitle

\begin{abstract}
We study \emph{prismatical} polytopes, those all of whose facets are combinatorially isomorphic to the simplicial prism $\Delta^{d-2}\times\Delta^1$.
In dimension three these are exactly the cubical polytopes, so the class sits naturally between simplicial and cubical polytopes.
In dimension four, $\Delta^2\times\Delta^2$ is the simplest prismatical polytope and we can construct infinitely many by using connected sums.
By fibreing a triangle over a triangulated polygon we construct infinitely many connected-sum-prime prismatical $4$-polytopes.
Finally, we show that there are no prismatical $d$-polytopes for $d\ge5$, answering a question of Perles and Shephard.
\end{abstract}

\section{Introduction}

A recurring theme in polytope theory is the study of  \emph{equifacetted} polytopes, that is, polytopes whose facets are all combinatorially isomorphic.
Two classical and well-studied classes are \emph{simplicial polytopes} \cite[\S4.5]{Gru03}, in which every facet is a simplex, and \emph{cubical polytopes} \cite[\S4.6]{Gru03}, in which every facet is a cube. 
In both cases, the strong combinatorial constraints imposed at the level of facets lead to rich structural theories and deep connections with topology, geometry, and combinatorics.

In this note we investigate a class of polytopes that naturally interpolates between these two settings.
Namely, we study polytopes whose facets are all combinatorially isomorphic to prisms over simplices.
We call these \textbf{prismatical polytopes}.
We prove their nonexistence in dimensions greater than four, describe the cases with few facets in dimension four,
and construct infinite families of connected-sum-prime examples in dimension four.

Perles and Shephard~\cite{PerShe67} asked which polytopes occur as the common facet type of an equifacetted polytope one dimension higher, calling them \emph{facets} and the remaining ones \emph{nonfacets}, and asked whether simple nonfacets exist in every dimension at least four~\cite[Question (iv), \S 8]{PerShe67}.
Shortly after, Barnette~\cite{Bar69} constructed an infinite family of simple $4$-dimensional nonfacets.
To our knowledge, no simple nonfacets in dimension five or higher have previously been reported.
In this language, Theorem~\ref{thm:product-facet-1-characterization}(viii) says that the simplicial prism $\Delta^{d-1}\times\Delta^1$ is a nonfacet for every $d\ge4$, so it gives a simple nonfacet in every dimension at least four.
Another consequence of our results is that, for every $d\ge4$, the smallest number of facets of a $d$-dimensional nonfacet is $d+2$: the simplicial prism attains this bound, whereas  a $d$-polytope with $d+1$ facets is a simplex and hence a facet. Unlike the connected-sum construction of Perles and Shephard~\cite[Theorem~(9)]{PerShe67}, ours gives infinitely many connected-sum-prime 4-dimensional examples with simplicial prisms as facets.

We now describe our main contributions.
Two (convex) polytopes are combinatorially isomorphic if their face lattices are isomorphic.
In this paper, we do not distinguish between combinatorially isomorphic polytopes. 

Throughout, a \emph{$d$-polytope} means a convex polytope of dimension $d$.
We write $\Delta^k$ for the $k$-simplex.

\begin{definition}[Simplicial prisms and prismatical polytopes]
Let $d\geq2$.
A \emph{simplicial prism of dimension $d-1$} is a polytope combinatorially isomorphic to $\Delta^{d-2}\times\Delta^1$.
We usually write \emph{prism} for \emph{simplicial prism}.

A $d$-polytope $\mathsf P$ is \emph{prismatical} if every facet of $\mathsf P$ is a simplicial prism.
We denote the class of prismatical $d$-polytopes by $\mathscr P_d$.
\end{definition}

\begin{remark}
For $d=3$, the polytope $\Delta^{d-2}\times \Delta^1$ is the square, hence $\mathscr{P}_3$ coincides with the class of \emph{cubical} $3$-polytopes (all facets are squares).
For $d=2$ the family $\mathscr{P}_2$ equals the set of all polygons.
\end{remark}

The 4-polytope $\Delta^2\times\Delta ^2$ is prismatical.
Further examples can be obtained by taking connected sums.

\begin{definition}[Connected sum along facets]
\label{def:connected-sum}
Let $\mathsf{P}$ and $\mathsf{P}'$ be $d$-polytopes, and let $\mathsf F\subset\mathsf P$ and $\mathsf F'\subset\mathsf P'$ be projectively isomorphic facets.
Apply suitable projective transformations to the two polytopes so that $\mathsf F=\mathsf F'$, the two polytopes lie in opposite closed halfspaces bounded by $\aff\mathsf F$, and their union is convex, with facets precisely the facets of $\mathsf P$ and $\mathsf P'$ other than $\mathsf F$ and $\mathsf F'$.
The resulting convex $d$-polytope is called a \emph{connected sum} of $\mathsf P$ and $\mathsf P'$ along $\mathsf F$ and $\mathsf F'$, and is denoted by $ \mathsf P\mathbin{\#_{\mathsf F}}\mathsf P'$.
\end{definition}

A polytope $\mathsf{P}$ is \emph{projectively unique} if every polytope combinatorially isomorphic to $\mathsf{P}$ is projectively isomorphic to $\mathsf{P}$.  
Every $d$-polytope with $d+2$ vertices or facets is projectively unique \cite[Ex.~2.15.19]{Pin24}.
Since a  simplicial prism of dimension $d$ has $d+2$ facets, it is projectively unique, so the gluing facets can always be identified projectively.

A prismatical $4$-polytope is \textbf{connected-sum-prime} if it admits no decomposition as a connected sum (along a triangular-prism facet).
The polytope $\Delta^2\times \Delta^2$ is connected-sum-prime.

Since every prismatical $4$-polytope has an even number of facets (Lemma~\ref{lem:dim4-basic}), every such polytope has at least six facets.
This bound is attained by $\Delta^2\times\Delta^2$, which is remarkable in several respects.
We prove the following structural result.

\begin{theorem}[Structure and constructions of prismatical polytopes]\label{thm:product-facet-1-characterization}
Let $d\ge 2$ and let $\mathscr{P}_d$ be the class of prismatical $d$-polytopes.
\begin{enumerate}[{\rm (i)}]
  \item If $d=2$, then $\mathscr{P}_2$ is the class of all polygons.
  \item If $d=3$, then $\mathscr{P}_3$ is exactly the class of cubical $3$-polytopes, for which a substantial theory is known, see \cite{BliBli98, JosZie00, SchZie04}.
  \item Up to combinatorial isomorphism, $\Delta^2\times\Delta^2$ is the unique simple prismatical polytope of dimension at least $4$.
  It is also the unique prismatical $4$-polytope with six facets.
  
\item There is no prismatical  $4$-polytope with eight facets. 

\item Every prismatical 4-polytope with ten facets is combinatorially isomorphic to $(\Delta^2\times \Delta^2)\#(\Delta^2\times \Delta^2)$.
In particular, there is no connected-sum-prime example.

\item Every prismatical 4-polytope with fourteen facets is combinatorially isomorphic to a connected sum of three copies of  $(\Delta^2\times \Delta^2)$.
In particular, there is no connected-sum-prime example.

  
\item For each even number $n\in \mathbb{N} \setminus \left\{ 2,4,6,8,10,14 \right\}$, there is a connected-sum-prime, prismatical $4$-polytope with $n$ facets. 

\item If $d>4$, then $\mathscr{P}_d$ is empty.
\end{enumerate}
\end{theorem}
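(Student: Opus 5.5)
The theorem collects eight separate assertions, so I would prove them in turn, and spend most of the effort on (iii), (iv) and (viii).

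Parts (i) and (ii) follow directly from the Remark, since $\Delta^0\times\Delta^1$ is an edge and $\Delta^1\times\Delta^1$ is a square.

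For the structural parts I would first study how the facets of a prismatical $d$-polytope $\mathsf P$ meet along ridges. Each ridge is a facet of a prism $\Delta^{d-2}\times\Delta^1$. There are two kinds: the two simplices $\Delta^{d-2}\times\{pt\}$, and the $d-1$ smaller prisms $\Delta^{d-3}\times\Delta^1$. A ridge lies in exactly two facets, so every simplex ridge of one facet must be a simplex ridge of its neighbour. For $d\ge4$ the simplex $\Delta^{d-2}$ is not a prism, and a prism of dimension $d-2$ is not a simplex. Hence simplex ridges are glued to simplex ridges, and prism ridges to prism ridges.

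It follows that the facets, joined through their simplex ridges, form a perfect matching-like structure of disjoint cycles. Counting prism ridges gives $(d-1)f_{d-1}/2$, an integer. In dimension 4 the paper's Lemma~\ref{lem:dim4-basic} gives that $f_3$ is even, and I would reprove it this way.

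Next I would analyse the vertex figures. A vertex $v$ of $\mathsf P$ lies in some facets, and in each of them it is a simple vertex of degree $d-1$. So the vertex figure $\mathsf P/v$ is a $(d-1)$-polytope whose facets are the vertex figures of the prisms at $v$. Each of these is a $(d-2)$-simplex, so $\mathsf P/v$ is simplicial. Moreover, the edges of $\mathsf P/v$ separate into two types: "vertical" edges (in the $\Delta^1$ direction) and "horizontal" ones. In each facet of $\mathsf P/v$, exactly one vertex, the vertical edge, is distinguished, and the matching rule above must be consistent at each ridge of $\mathsf P/v$.

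For (viii) the plan is to show this labelling is impossible when $d\ge5$. In each $(d-2)$-simplex facet of $\mathsf P/v$, the ridge opposite the vertical vertex is a horizontal simplex, and the other ridges contain the vertical vertex. Through a ridge containing the vertical vertex $u$, the neighbouring facet must have the same vertical vertex $u$. Now consider the facets of $\mathsf P/v$ containing $u$. They form the star of $u$, which is a ball, and every facet in that ball has $u$ as its vertical vertex. Ridges not containing $u$ are then horizontal, and across such a ridge the neighbouring facet also has it as a horizontal simplex ridge. The horizontal ridges are exactly $\Delta^{d-2}\times\{pt\}$ faces, so the neighbouring facet's vertical vertex must be the vertex opposite that ridge. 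In $\mathsf P/v$, which is a simplicial $(d-1)$-polytope, this forces $\mathsf P/v$ to be the bipyramid-like complex $\{u,u'\}*\partial(\text{link})$, where the link of $u$ is a simplicial $(d-2)$-sphere each of whose facets is a horizontal $(d-3)$-simplex.

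Thus every vertex of $\mathsf P$ has exactly two vertical edges, and the horizontal link is a simplicial $(d-3)$-sphere. The horizontal edges at $v$ are exactly the vertices of this sphere. I would then project along vertical edges: collapsing vertical edges should give a simplicial $(d-2)$-dimensional structure. The aim is to show that $\mathsf P$ is combinatorially a "fibred" object, consisting of a cycle of vertical edges times a simplicial sphere. The vertical edges form disjoint cycles (polygons). Since the horizontal faces containing $v$ are the simplices of this link, the horizontal skeleton forms a simplicial complex $K$ of dimension $d-2$ that is a pseudomanifold. Each vertex lies on one vertical cycle, and the horizontal parts form $d-2$ simplices.

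The main obstacle is turning this into a contradiction for $d\ge5$. What I would try first is a convexity or 2-face argument. The 2-faces of $\mathsf P$ are triangles (horizontal) and quadrilaterals (vertical $\times$ horizontal edge). Consider a 3-face of type $\Delta^1\times\Delta^1\times$?, which is impossible since facets only contain $\Delta^k\times\Delta^1$ faces. A vertex figure argument on 3-faces such as $\Delta^2\times\Delta^1$ containing a given horizontal triangle should show that the horizontal link at $v$ must be the boundary of a simplex with all vertical cycles of length 3. That would force $\mathsf P$ to be simple, and then (iii) applies. So I would prove (iii) first: a simple prismatical polytope has vertex figure a simplex, hence exactly two vertical edges and $d-2$ horizontal edges at each vertex. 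The horizontal edges then span a simplex fibre, and a standard product recognition (a simple polytope whose facets split into two families $\Delta\times\{\cdot\}$ type) yields $\Delta^{a}\times\Delta^{b}$ with both factors forced to have prism facets, giving $d=4$, $\Delta^2\times\Delta^2$.

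For (iv)–(vi) I would enumerate the possible horizontal complexes $K$ (a triangulated 2-sphere in the 4-dimensional case) with the vertical cycle data, using the $f$-vector, Euler relation and the evenness of $f_3$. For (vii) I would use the paper's announced construction: fibre a triangle over a triangulated polygon, and check primeness by showing that no prism ridge separates $\mathsf P$ into two parts.
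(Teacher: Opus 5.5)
Your parts (i) and (ii) are fine, but your argument for (viii) has a genuine gap exactly where the contradiction has to come from. Your propagation rules at a vertex $v$ only show that the set $V$ of vertical vertices of $\partial(\mathsf P/v)$ meets every facet exactly once, so that $\partial(\mathsf P/v)$ is the union of the stars of the $u\in V$. (The first rule needs $d\ge5$, where prismatical ridges carry intrinsic bridge edges.) Nothing forces $|V|=2$: the boundary of the Kleetope of a $(d-1)$-simplex, with its $d$ apices declared vertical, obeys both rules.

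More seriously, no argument using only such local face-lattice data can reach a contradiction. The polytopal complex $\partial\Delta^{d-1}\times C_n$, with $C_n$ an $n$-cycle, has only cells $\Delta^{d-2}\times\Delta^1$, each ridge in two cells, and bipyramidal vertex links with two vertical edges; yet it is the manifold $S^{d-2}\times S^1$. So you need global input, namely convexity, or at least the sphere topology of $\partial\mathsf P$. Your end-game, an unspecified ``convexity or 2-face argument'' forcing $\mathsf P$ to be simple, is not an argument. It even contradicts your own bipyramid claim, since a bipyramidal vertex figure has $d+1$ vertices.

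The missing idea, supplied in the paper, is geometric:
\begin{enumerate}
\item The bridge edges of a facet are pairwise coplanar (via the quadrilaterals) but not all coplanar, hence concurrent at a projective point $v_{\mathsf F}$ (Lemma~\ref{lem:bridge-point}).
\item For $d\ge5$ this point is constant on each component $U$ of the prismatical dual graph.
\item Coning $\mathsf P$ from it shows that the facets outside $U$ split into nonempty visible and nonvisible classes sharing no ridge (Corollary~\ref{cor:separation}).
\item Hence every vertex of the connected graph of components would be a cut vertex, which is impossible.
\end{enumerate}

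In dimension four your framework does not exist at all, because a quadrilateral ridge carries two prism structures. In $\Delta^2\times\Delta^2$ the edge from $(a_1,b_1)$ to $(a_2,b_1)$ is the bridge edge of the facet $[a_1,a_2]\times\Delta^2$. But it lies in the triangle $\Delta^2\times\{b_1\}$ of the facet $\Delta^2\times[b_1,b_2]$. In fact every edge is vertical in some facets and horizontal in others. So your claim that a simple prismatical polytope has two vertical edges at each vertex already fails for $\Delta^2\times\Delta^2$. Your product recognition for (iii) therefore has nothing to start from, and there is no horizontal complex $K$ with vertical-cycle data to enumerate for (iv)--(vi).

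The paper instead works with the dual and with counting:
\begin{enumerate}
\item[(iii)] Vertices of $\mathsf P^*$ have degree $d+1$, so Barnette's lower bound theorem gives $f_{d-1}(\mathsf P)\le d+2$. McMullen's classification of $d$-polytopes with $d+2$ facets then leaves only $\Delta^2\times\Delta^2$. This also yields the six-facet statement, which you do not address.
\item[(iv)] The facets of $\mathsf P^*$ are simple $3$-polytopes with four or six vertices. A six-vertex facet contradicts degree five, so $\mathsf P$ would be simple, hence $\Delta^2\times\Delta^2$.
\item[(v)] Vertex--facet counting gives at least six simple vertices, hence a triangular ridge with two simple vertices. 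Lemma~\ref{lem:split} then splits off a copy of $\Delta^2\times\Delta^2$.
\item[(vi)] This requires the appendix's key-edge matching and parity analysis.
\end{enumerate}
For (vii), citing the fibre-bundle construction is fine, but you still need the facet counts $4k$ and $4k+6$. Also, primeness concerns a hyperplane section through six vertices of $\mathsf P$, not a separating ridge.
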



The construction in Theorem~\ref{thm:product-facet-1-characterization}(vii) is a polytopal fibre bundle, which allows us to express the combinatorics of a four dimensional polytope using a two dimensional picture.
This is equivalent to a construction of Pfeifle, Pilaud and Santos~\cite[Section~2.2]{PfePilSan12}.
Alternatively, after the projective preprocessing as described in \cite[Sec.~2.5]{Pin24}, it may be viewed as the image under a linear projection of a $5$-polytope $\widetilde{\mathsf P}(\mathsf X)$ combinatorially isomorphic to $\Delta^2\times\mathsf X$, where $\mathsf X$ is an auxiliary 3-polytope.
The Projection Lemma of Sanyal and Ziegler~\cite[Lem.~2.5]{SanZie10} shows that this projection strictly preserves the  $1$-skeleton of $\widetilde{\mathsf P}(\mathsf X)$. 

We end the introduction with a bold conjecture, which would generalise Theorem~\ref{thm:product-facet-1-characterization}(viii).

\begin{conjecture} Let $p,q$ be integers satisfying $p\ge q\ge 1$ and $p+q=d-1$.
If $p>q+1$, then there is no  $d$-polytope whose facets are all combinatorially equivalent to $\Delta^p \times \Delta^q$.
    \label{conj:product-facet-nonexistence}
\end{conjecture}

If the sizes of the factors are not fixed, there are many polytopes whose facets are all products of simplices.
Let $\mathsf P$ be a simplicial $d$-polytope and let $H$ be a hyperplane that meets the interior of $\mathsf P$ but contains none of its vertices.
Every facet of the $(d-1)$-polytope $\mathsf P\cap H$ has the form $\mathsf F\cap H$, where $\mathsf F$ is a facet of $\mathsf P$ whose vertices are not all on the same side of $H$.
If $a$ vertices of the $(d-1)$-simplex $\mathsf F$ lie on one side of $H$ and $b=d-a$ on the other, then $\mathsf F\cap H$ is combinatorially isomorphic to $\Delta^{a-1}\times\Delta^{b-1}$, the product of the two simplices spanned by these vertex sets.
Thus every facet of $\mathsf P\cap H$ is a product of two simplices, but in general the pair $(a,b)$ changes from facet to facet.

\section{Preliminaries}
\label{sec:preliminaries}

We begin by recording the combinatorial data used to specify a connected sum.
We denote by $\mathcal L(\mathsf P)$ the \emph{face lattice}  of a polytope $\mathsf P$.

\begin{definition}
\label{def:gluing-map}
Let $\mathsf{P}$ and $\mathsf{P'}$ be polytopes with facets $\mathsf{F}$ and $\mathsf{F'}$ that are projectively isomorphic. The projective identification of $\mathsf F$ with $\mathsf F'$ induces a combinatorial isomorphism \[
\f\colon\mathcal L(\mathsf F)\longrightarrow\mathcal L(\mathsf F'),
\]
called the \emph{gluing map}.
Thus, for every proper face $\mathsf G\subsetneq\mathsf F$, the face $\mathsf G$ is identified with $\f(\mathsf G)\subsetneq\mathsf F'$ in the connected sum.
\end{definition}

Definitions~\ref{def:connected-sum} and~\ref{def:gluing-map}
immediately give the face numbers of a connected sum.
As usual, $f_k(\mathsf P)$ denotes the number of $k$-faces of $\mathsf P$.

\begin{remark}
    \label{rem:connected-sum-faces} Let $\mathsf{P}$ and $\mathsf{P}'$ be $d$-polytopes with two respective facets  $\mathsf F$ and $\mathsf F'$ projectively isomorphic.
The number of proper faces of $\mathsf P\joincs_{\mathsf F} \mathsf P'$ is as follows.
\begin{align*}
 f_k(\mathsf P\joincs_{\mathsf F} \mathsf P')
   &=f_k(\mathsf P)+f_k(\mathsf P')-f_k(\mathsf F),
       &&0\leq k\leq d-2\\
 f_{d-1}(\mathsf P\joincs_{\mathsf F} \mathsf P')
   &=f_{d-1}(\mathsf P)+f_{d-1}(\mathsf P')-2.
\end{align*}
\end{remark}

\begin{proposition}[Closure under connected sum]
\label{prop:connected-sum-closure}
The class $\mathscr P_d$ is closed under connected sums.
\end{proposition}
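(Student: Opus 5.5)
The plan is to read the closure property straight off Definition~\ref{def:connected-sum}. Let $\mathsf P,\mathsf P'\in\mathscr P_d$, and let $\mathsf F\subset\mathsf P$ and $\mathsf F'\subset\mathsf P'$ be facets. Since both are simplicial prisms of dimension $d-1$, they are combinatorially isomorphic. A simplicial prism of dimension $d-1$ is a $(d-1)$-polytope with $d+1=(d-1)+2$ facets, so it is projectively unique \cite[Ex.~2.15.19]{Pin24}. Hence $\mathsf F$ and $\mathsf F'$ are projectively isomorphic, and the connected sum $\mathsf P\joincs_{\mathsf F}\mathsf P'$ is defined.

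Next I would identify the facets of the sum. By Definition~\ref{def:connected-sum}, they are precisely the facets of $\mathsf P$ other than $\mathsf F$ together with the facets of $\mathsf P'$ other than $\mathsf F'$. I would argue that each such facet keeps its combinatorial type, because it equals the image of the original facet under the projective transformation applied to $\mathsf P$ or to $\mathsf P'$. The key point is that projective transformations admissible for a polytope preserve its face lattice. So each facet of the sum is a simplicial prism, and $\mathsf P\joincs_{\mathsf F}\mathsf P'\in\mathscr P_d$.

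The main obstacle is to make sure that "facets are precisely those facets" means facets as polytopes, and not merely the same supporting hyperplanes. A facet $\mathsf G$ of $\mathsf P$ could a priori be merged with a facet of $\mathsf P'$ lying in the same hyperplane, which would enlarge or change $\mathsf G$. This possibility is excluded by the requirement that the facet set of the union consists of the old facets unchanged. Concretely, the union is convex and each $\mathsf G\neq\mathsf F$ lies in a supporting hyperplane of the union. The face of the union cut out by that hyperplane equals $\mathsf G$, since no facet of $\mathsf P'$ is coplanar with it by the definition's assumption.

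Finally, I would note that the facet count from Remark~\ref{rem:connected-sum-faces} is consistent with this: the sum has $f_{d-1}(\mathsf P)+f_{d-1}(\mathsf P')-2$ facets.
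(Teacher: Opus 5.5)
Your proposal is correct and follows the paper's argument. Projective uniqueness of the simplicial prism lets the gluing be performed, and every facet of the sum is an unchanged (projectively transformed) facet of one of the summands, hence still a simplicial prism; your extra care about the facets of the union is already supplied by the clause in Definition~\ref{def:connected-sum}, which is all the paper uses. One small nit: for $d=2$ the prism is a segment with two facets rather than $d+1$, but projective uniqueness is trivial there anyway.
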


\begin{proof}
Any two simplicial prisms of the same dimension are projectively isomorphic.
All facets of $\mathsf{P}\#\mathsf{Q}$ are inherited from $\mathsf{P}$ or $\mathsf{Q}$
except for the two facets used in the gluing, which are removed.
Hence every facet of $\mathsf{P}\#\mathsf{Q}$ is still combinatorially isomorphic to $\Delta^{d-2}\times \Delta^1$.
\end{proof}

For a face $\mathsf F$ of  $\mathsf P$, set 
 \begin{equation*}
   \Fc_k(\mathsf F,\mathsf P):=\set{\mathsf G\in\mathcal L(\mathsf P): \mathsf F\subseteq\mathsf G,\ \dim\mathsf G=k}, 
	 \qquad \text{and} \qquad f_k(\mathsf F,\mathsf P):=\#\Fc_k(\mathsf F,\mathsf P). 
\end{equation*}
Thus $\Fc_k(\mathsf F,\mathsf P)$ is the set of $k$-faces of $\mathsf P$ containing $\mathsf F$, and $f_k(\mathsf F,\mathsf P)$ is its cardinality.
In particular, $\Fc_k(\mathsf P):=\Fc_k(\varnothing,\mathsf P)$ and $ f_k(\mathsf P):=f_k(\varnothing,\mathsf P)$ denote, respectively, the set and the number of $k$-faces of $\mathsf P$.
When $\mathsf P$ is fixed, we abbreviate $f_k(\mathsf P)$ to $f_k$.

For a polytope $\mathsf P$, let $\mathcal B(\mathsf P)$ denote its \emph{boundary complex}, consisting of all proper faces of $\mathsf P$.
For a finite pure $d$-dimensional polytopal complex $\mathcal C$, its \emph{boundary complex} $\mathcal B(\mathcal C)$ is the subcomplex generated by the $(d-1)$-faces contained in exactly one facet of $\mathcal C$.
The \emph{underlying set} of  $\mathcal C$, denoted $\uset  \mathcal C$, is the set of points in  the ambient space  that belongs to at least one polytope in $\mathcal C$.
The $k$-skeleton of $\mathsf P$ is the subcomplex formed by its faces of dimension at most $k$.
The \emph{graph} $G(\mathsf P)$ of $\mathsf P$ is its $1$-skeleton.
The \emph{dual graph} $G^*(\mathsf P)$ has the facets of $\mathsf P$ as its vertices, with two facets adjacent precisely when they intersect in a ridge.
See \cite[Sec.~2.11]{Pin24} for definitions not provided here.

In a $d$-polytope, a vertex is \emph{simple} if $f_{d-1}(u,\mathsf P)=d$; otherwise it is \emph{nonsimple}.
Similarly, an edge $\mathsf e$ is \emph{simple} if $f_{d-1}(\mathsf e,\mathsf P)=d-1$ and \emph{nonsimple} if $f_{d-1}(\mathsf e,\mathsf P)\ge d$.

We continue with some basic properties of the dual of a prismatical polytope.

\begin{lemma} Let $\mathsf P$ be a prismatical $d$-polytope, where $d\ge4$, and let $\mathsf  P^*$ be the dual polytope of $\mathsf P$.
The following hold. 
\begin{enumerate}[\rm (i)]
    \item  Every vertex figure of $\mathsf P^*$  is a bipyramid over a $(d-2)$-simplex.
    
   \item  Every vertex of $\mathsf P^*$ has degree $d+1$.  
   \item Every facet of $\mathsf P^*$ is a simple polytope. 
\end{enumerate}
        \label{lem:prismatical-dual-basics}
\end{lemma}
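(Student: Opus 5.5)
The plan is to use the standard duality between facets of $\mathsf P$ and vertices of $\mathsf P^*$. Under the inclusion-reversing bijection $\mathcal L(\mathsf P)\to\mathcal L(\mathsf P^*)$, a facet $\mathsf F$ of $\mathsf P$ corresponds to a vertex $v_{\mathsf F}$ of $\mathsf P^*$. The vertex figure $\mathsf P^*/v_{\mathsf F}$ has face lattice isomorphic to the interval $[v_{\mathsf F},\mathsf P^*]$ in $\mathcal L(\mathsf P^*)$. That interval is anti-isomorphic to $[\varnothing,\mathsf F]=\mathcal L(\mathsf F)$. Hence $\mathsf P^*/v_{\mathsf F}$ is combinatorially the dual polytope $\mathsf F^*$.

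For (i), it therefore suffices to compute $(\Delta^{d-2}\times\Delta^1)^*$. Dualising a product gives a free sum: $(\mathsf A\times\mathsf B)^*\cong \mathsf A^*\oplus\mathsf B^*$. Here $(\Delta^{d-2})^*\cong\Delta^{d-2}$, and $(\Delta^1)^*$ is a segment. The free sum of a $(d-2)$-simplex with a segment is exactly the bipyramid over $\Delta^{d-2}$. Alternatively, one can check this directly: the facets of the prism are $d-1$ "vertical" prisms $\Delta^{d-3}\times\Delta^1$ plus two "bases" $\Delta^{d-2}$, and these become the $d-1$ equatorial vertices and the two apices. This gives (i).

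For (ii), the degree of $v_{\mathsf F}$ in $\mathsf P^*$ equals the number of vertices of the vertex figure. A bipyramid over $\Delta^{d-2}$ has $(d-1)+2=d+1$ vertices, so every vertex of $\mathsf P^*$ has degree $d+1$. Dually, this says each facet of $\mathsf P$ has $d+1$ ridges, since the prism $\Delta^{d-2}\times\Delta^1$ has $(d-1)+2$ facets.

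For (iii), which is the only step needing a little care, a facet $\mathsf G^*$ of $\mathsf P^*$ corresponds to a vertex $u$ of $\mathsf P$. The dual of $\mathsf G^*$ is then combinatorially the vertex figure $\mathsf P/u$. So $\mathsf G^*$ is simple exactly when $\mathsf P/u$ is simplicial, i.e. when every facet of $\mathsf P/u$ is a simplex. The facets of $\mathsf P/u$ are the vertex figures $\mathsf F/u$, for facets $\mathsf F\ni u$ of $\mathsf P$. Each such $\mathsf F$ is a prism $\Delta^{d-2}\times\Delta^1$, which is a simple polytope, so its vertex figure at $u$ is a $(d-2)$-simplex. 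Hence $\mathsf P/u$ is simplicial and $\mathsf G^*$ is simple. The hypothesis $d\ge4$ is not really needed for these arguments; it only matters for the later uses of the lemma, and for (i) to describe a genuine bipyramid over a simplex of dimension at least $2$.
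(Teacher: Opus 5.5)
Your proposal is correct and matches the paper's proof: for (i) and (ii) you identify $\mathsf P^*/v$ with the dual of the prism facet, which is a bipyramid over a $(d-2)$-simplex with $d+1$ vertices. For (iii) you argue on the primal side: the dual of a facet of $\mathsf P^*$ is $\mathsf P/u$, whose facets $\mathsf F/u$ are simplices because prisms are simple. The paper instead stays in $\mathsf P^*$ and notes that $\mathsf F/v$ is a facet of the simplicial bipyramid $\mathsf P^*/v$; the two are dual formulations of the same fact.
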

\begin{proof}
(i)-(ii)     For each vertex $v$ of the dual polytope $\mathsf P^*$, the vertex figure $\mathsf  P^*/v$ is dual to the facet of $\mathsf  P$ associated with $v$, which is a simplicial $(d-1)$-prism.
Thus $\mathsf  P^*/v$ is combinatorially isomorphic to a bipyramid over a $(d-2)$-simplex.
It follows that $\mathsf  P^*/v$ has $d+1$ vertices.
Hence every vertex of $\mathsf P^*$ has degree $d+1$.
This settles (i) and (ii).

(iii)   If $\mathsf F$ is a facet of $\mathsf  P^*$ and $v$ is a vertex of $\mathsf F$ , then the vertex figure $\mathsf  F/v$ is a facet of $\mathsf  P^*/v$; it is therefore a simplex.
Thus every facet  of $\mathsf P^*$ is a simple polytope. \end{proof}

We next characterise the  simple prismatical polytopes. 
For this we need the following classical structural results from McMullen and Barnette.

\begin{enumerate}
    \item Let  $\mathsf P$ be a $d$-polytope with $d+2$ facets, where $d\ge 2$.  Then there exist integers $2\le a\le d$ and $1\le m\le \floor{a/2}$ such that $\mathsf P$ is a $(d-a)$-fold pyramid over $\Delta ^m\times \Delta^{a-m}$, see \cite[Sec.~3]{McM71}.  \label{lem:dplus2facets}
    
    \item Let $d\ge 4$  and let $\mathsf P$ be a simplicial $d$-polytope with $n$ vertices. Then $f_2(P) \geq \binom{d}{2}n-\binom{d+1}{3}2$. This is part of the more general Barnette's lower bound Theorem, see \cite{Bar71,Bar73}. \label{thm:simple-lbt} 
    
\end{enumerate}



\begin{proposition}\label{prop:smallest-prismatical}
Let $\mathsf P$ be a prismatical $d$-polytope, where $d\ge4$. 
If $\mathsf P$ is simple or $\mathsf{P}$ has six facets, then $d=4$ and \[
\mathsf P\cong\Delta^2\times\Delta^2.\]
\end{proposition}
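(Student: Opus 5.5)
Both cases reduce to an analysis of the dual polytope $\mathsf P^*$, using Lemma~\ref{lem:prismatical-dual-basics} together with the two classical results recalled above.

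\emph{Simple case.} If $\mathsf P$ is simple, then $\mathsf P^*$ is simplicial. By Lemma~\ref{lem:prismatical-dual-basics}(i), every vertex figure of $\mathsf P^*$ is a bipyramid over a $(d-2)$-simplex. The vertex figure of a simplicial polytope is itself simplicial, so this bipyramid must be simplicial. That is automatic, since a bipyramid over a simplex is always simplicial, so it gives no restriction by itself. Instead I will count. Lemma~\ref{lem:prismatical-dual-basics}(ii) says every vertex of $\mathsf P^*$ has degree $d+1$. Hence, with $n=f_0(\mathsf P^*)$, we get $f_1(\mathsf P^*)=n(d+1)/2$.

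Now count $2$-faces. Each $2$-face of the simplicial polytope $\mathsf P^*$ containing a vertex $v$ corresponds to an edge of the vertex figure. A bipyramid over $\Delta^{d-2}$ has $\binom{d-1}{2}+2(d-1)$ edges. Each triangle has three vertices, so
\[
f_2(\mathsf P^*)=\frac{n}{3}\Bigl(\binom{d-1}{2}+2(d-1)\Bigr).
\]
Barnette's lower bound gives $f_2\ge\binom d2 n-2\binom{d+1}{3}$. For $d\ge4$, compare the coefficients of $n$: $\frac13\bigl(\binom{d-1}{2}+2(d-1)\bigr)=\frac{(d-1)(d+2)}{6}$, against $\binom d2=\frac{d(d-1)}2$. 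The second exceeds the first by $\frac{(d-1)(2d-2)}{6}$. Therefore
\[
\frac{(d-1)^2}{3}\,n\le 2\binom{d+1}{3},
\qquad\text{that is,}\qquad
n\le \frac{d(d+1)}{d-1}.
\]
For $d=4$ this gives $n\le 6$, and for $d\ge5$ it gives $n\le d+2$ (after a quick check). A simplicial polytope with $n=d+1$ is a simplex, whose vertex degree is $d$, not $d+1$. So $n=d+2$, which means $\mathsf P$ has $d+2$ facets.

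\emph{Reduction to $d+2$ facets.} The statement's other hypothesis, six facets, also lands here, provided $d=4$. If $d\ge5$ then $d+1\ge6$, and the facet count is at least $d+1$, so six facets forces $d=5$ and $\mathsf P$ a simplex, which is not prismatical. So in both cases we are reduced to a prismatical $\mathsf P$ with $d+2$ facets.

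By McMullen's classification, $\mathsf P$ is a $(d-a)$-fold pyramid over $\Delta^m\times\Delta^{a-m}$. If $d-a\ge1$, the base of the pyramid is a facet, and the other facets are pyramids. These pyramids are not prisms for $d\ge4$, since a prism $\Delta^{d-2}\times\Delta^1$ with $d\ge3$ is never a pyramid. Hence $a=d$ and $\mathsf P=\Delta^m\times\Delta^{d-m}$.

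The facets of this product are $\Delta^{m-1}\times\Delta^{d-m}$ and $\Delta^m\times\Delta^{d-m-1}$. Each must be $\Delta^{d-2}\times\Delta^1$, meaning one factor is an edge and the other has dimension $d-2$. For the first type this needs $m-1=1$ or $d-m=1$. For the second it needs $m=1$ or $d-m-1=1$, with the complementary dimensions matching. Take $m\le d-m$. Then $m=1$ gives facets $\Delta^0\times\Delta^{d-1}$, a simplex, which is not a prism. So $m=2$, and then the second type forces $d-m-1=1$, i.e.\ $d=4$. This yields $\mathsf P\cong\Delta^2\times\Delta^2$.

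The main obstacle is the simple case: making sure the Barnette inequality is applied with the correct constant and that the arithmetic really forces $n=d+2$ in all dimensions, including small edge cases such as $d=4$ with $n=5$. If the numerics turned out weaker than expected, I would fall back on parity arguments, such as $n(d+1)$ being even, to close the gap.
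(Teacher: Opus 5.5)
Your proof is correct and follows essentially the paper's route: dualize, bound $f_0(\mathsf P^*)$ by Barnette's lower bound to get at most $d+2$ facets, then use McMullen's classification to rule out pyramids and force $\Delta^2\times\Delta^2$ with $d=4$. The one variation is that you apply the $f_2$ form of Barnette's inequality, counting $2$-faces through the edges of the bipyramid vertex figures, whereas the paper applies the edge form using vertex degree $d+1$; both yield the same bound $n\le d(d+1)/(d-1)$.
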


\begin{proof}
Let  $\mathsf{P}$ be a simple prismatical $d$-polytope.  
By Lemma~\ref{lem:dim4-basic}, a prismatical $4$-polytope has an even number of facets, so it has at least six. 

By Lemma~\ref{lem:prismatical-dual-basics}, every vertex of  the dual $\mathsf P^*$ has degree $d+1$. 
Let $n=f_0(\mathsf P^*)$ and $e=f_1(\mathsf P^*)$. 
Then $e=(d+1)n/2$. 
By Barnette's Lower Bound Theorem for edges of simplicial $d$-polytopes, $e\ge dn-\binom{d+1}{2}$.
Hence $d(d+1)\ge n(d-1)$.
In other words, 
\begin{equation}
    n\le d+2+2/(d-1)
\end{equation}
This gives $d+2 \geq n$ for $d\ge 4$.
Thus  $\mathsf P$ has at most $d+2$ facets. 
A $d$-polytope with $d+1$ facets is a simplex and hence is not prismatical. 
The simple $d$-polytopes with $d+2$ facets are Cartesian products of two simplices $\Delta^m\times \Delta^{d-m}$, where $1\le m\le d/2$.
If $m=1$, then $\mathsf P$ has a simplex facet, which is impossible.
If $m\geq3$, then the facet $\Delta^{m-1}\times\Delta^{d-m}$ is not a simplicial prism. 
Hence $m=2$. 

The facet $\Delta^2\times\Delta^{d-3}$ is then a simplicial prism only when $d=4$. 
This settles the uniqueness of $\Delta ^2\times\Delta ^2$ among the simple prismatical polytopes.
If $ \mathsf{P} $ has six facets, then its dimension is four or five.
Five cannot be as it would be a simplex which is not prismatical.
Thus $d=4$ and $\mathsf P$ has six facets. The pyramid cases in McMullen's classification are excluded. Hence $\mathsf P$ is simple, and the preceding argument gives $\mathsf P\cong\Delta^2\times\Delta^2$.
\end{proof}

\section{Dimension four}
\label{sec:prismatical-dim-4}
In dimension four we have the central example of the product of two triangles.
\begin{notation}
We write $\mathsf B:=\Delta^2\times\Delta^2$ and $\pface:=\Delta^2\times\Delta^1$.
The polytope $\mathsf B$ will be called the \emph{bitriangle}.
\end{notation}

By Proposition~\ref{prop:connected-sum-closure}, infinitely many prismatical 4-polytopes can be obtained via repeated connected sums having $\Delta^2\times \Delta^2$ as a summand.
As we will see, prismatical polytopes with few facets are often connected sums of copies of $ \mathsf{B} $.
However, we will construct infinitely many prismatical polytopes that are prime, in the sense that they are not the connected sum of two other prismatical polytopes.

Our first result deals with basic counts in a prismatical 4-polytope. 
For a vertex $u$ in a  polytope $\mathsf P$, we write $\deg_{\mathsf P}(u):=\deg_{G(\mathsf P)}(u)$.

\begin{lemma}\label{lem:dim4-basic}
Suppose $\mathsf P$ is a prismatical 4-polytope. 
Then $f_3$ is even,
\begin{equation}\label{eq:counts}
  f_2=\frac{5f_3}2,\qquad f_1=f_0+\frac{3f_3}2,
  \qquad \sum_{u\in \Fc_0(\mathsf P)} f_3(u,\mathsf P)=6f_3.
\end{equation}
For every vertex $u$, $f_3(u,\mathsf P)$ is even, with \begin{equation}\label{eq:loc}
  \deg_{\mathsf P}(u)=\frac12 f_3(u,\mathsf P)+2,
  \qquad 3f_3(u,\mathsf P)\le2f_0-6 .
\end{equation}

Additionally,  if $f_0\le16$, then $f_3(u,\mathsf P)\in\{4,6,8\}$, and the possible vertex figures $\mathsf P/u$, with the corresponding values of $f_3(\mathsf e,\mathsf P)$ on their incident edges, are \begin{equation}\label{eq:figs}
\begin{array}{llll}
 f_3(u,\mathsf P)=4:&\text{tetrahedron},&(3,3,3,3);&\\
 f_3(u,\mathsf P)=6:&\text{triangular bipyramid},&(4,4,4,3,3);&\\
 f_3(u,\mathsf P)=8:&\text{octahedron},&(4,4,4,4,4,4);&\\
 f_3(u,\mathsf P)=8:&\text{stacked type},&(5,5,4,4,3,3).&
\end{array}
\end{equation}
\end{lemma}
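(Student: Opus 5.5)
The plan is to obtain the global identities by double counting and Euler's relation, and the local statements from the vertex figure $\mathsf P/u$.

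\emph{Global counts.} A triangular prism $\pface$ has $6$ vertices, $9$ edges and $5$ facets (two triangles and three squares). Every $2$-face of the $4$-polytope $\mathsf P$ lies in exactly two facets. Counting incidences between facets and $2$-faces therefore gives $2f_2=5f_3$. Hence $f_2=\tfrac52 f_3$, and $f_3$ is even. Euler's relation $f_0-f_1+f_2-f_3=0$ then yields $f_1=f_0+f_2-f_3=f_0+\tfrac32 f_3$. Counting vertex--facet incidences, with each facet having $6$ vertices, gives $\sum_u f_3(u,\mathsf P)=6f_3$.

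\emph{Local structure at a vertex $u$.} The facets of the $3$-polytope $\mathsf P/u$ are the vertex figures $\mathsf F/u$ for facets $\mathsf F\ni u$. Since $\pface$ is simple, each $\mathsf F/u$ is a triangle, so $\mathsf P/u$ is a simplicial $3$-polytope. It has $\deg_{\mathsf P}(u)$ vertices and $f_3(u,\mathsf P)$ facets. For simplicial $3$-polytopes, facets $=2\cdot$vertices$-4$, so $f_3(u,\mathsf P)=2\deg(u)-4$ is even, and $\deg(u)=\tfrac12 f_3(u,\mathsf P)+2$. More generally, $f_3(\mathsf e,\mathsf P)$ for an edge $\mathsf e\ni u$ equals the degree of the vertex of $\mathsf P/u$ corresponding to $\mathsf e$.

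For the inequality I will count vertices of $\mathsf P$ lying opposite $u$ in squares.
\begin{enumerate}[(a)]
\item The $2$-faces through $u$ correspond to edges of $\mathsf P/u$, so there are $\tfrac32 f_3(u,\mathsf P)$ of them.
\item Each facet through $u$ contains exactly one triangle through $u$, and each triangle lies in two facets. So there are $\tfrac12 f_3(u,\mathsf P)$ triangles through $u$, and hence $f_3(u,\mathsf P)$ squares through $u$.
\item Each such square has a vertex $w$ diagonally opposite $u$, and $w$ is not adjacent to $u$.
\end{enumerate}
The key point, which I expect to be the only non-routine step, is that these opposite vertices are pairwise distinct. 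Suppose two squares through $u$ had the same opposite vertex $w$. Their intersection would be a face containing both $u$ and $w$, of dimension at most $1$. It cannot be an edge, since $u$ and $w$ are not adjacent, and it cannot be a vertex. This is a contradiction. So $u$, its $\deg(u)$ neighbours, and these $f_3(u,\mathsf P)$ non-neighbours are distinct, giving
\[
f_0\ge 1+\tfrac12 f_3(u,\mathsf P)+2+f_3(u,\mathsf P).
\]
This is exactly $3f_3(u,\mathsf P)\le 2f_0-6$.

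\emph{Small cases.} If $f_0\le16$, the inequality gives $3f_3(u,\mathsf P)\le26$. Since $f_3(u,\mathsf P)$ is even and at least $4$ (a simplicial $3$-polytope has at least four facets), we get $f_3(u,\mathsf P)\in\{4,6,8\}$. Equivalently, $\mathsf P/u$ is a simplicial $3$-polytope with $4$, $5$ or $6$ vertices. By the classical enumeration, these are:
\begin{enumerate}[(1)]
\item the tetrahedron;
\item the triangular bipyramid;
\item for six vertices, exactly two polytopes: the octahedron and the stacked one obtained by stacking on a facet of the bipyramid.
\end{enumerate}
Reading off their vertex degrees, which by the remark above are the values $f_3(\mathsf e,\mathsf P)$ on the edges at $u$, gives the list \eqref{eq:figs}.
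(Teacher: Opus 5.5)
Your proposal is correct and follows essentially the same route as the paper: double counting and the Euler relation give the global identities, and the simplicial vertex figure $\mathsf P/u$ gives the parity and degree statements. For $3f_3(u,\mathsf P)\le 2f_0-6$ you use the same key fact, that a nonneighbour $w$ lying opposite $u$ in a quadrilateral ridge determines that ridge uniquely; the paper counts the $5f_3(u,\mathsf P)$ pairs $(\mathsf F,w)$ and removes the $3f_3(u,\mathsf P)$ adjacent ones, whereas you count the $f_3(u,\mathsf P)$ squares through $u$ directly and explicitly prove the uniqueness step that the paper only asserts.
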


\begin{proof}
Each prism has two triangular and three quadrilateral ridges, and each ridge lies in exactly two facets. 
By counting the facet-ridge incidences in $\mathsf P$, we get  $2f_2=5f_3$. 
Thus $f_3$ is even. 
Euler--Poincar\'e--Schl\"afli's equation for 4-polytopes (see \cite[Thm.~2.12.17]{Pin24}) then gives $f_1=f_0+f_2-f_3=f_0+3f_3/2.$

To finish (\ref{eq:counts}), note that each prism has six vertices, and each vertex $u$ in $\mathsf P$ is contained in exactly $f_3(u,\mathsf P)$ facets, so counting facet-vertex incidences yields 
\[
\sum_{u\in \Fc_0(\mathsf P)} f_3(u,\mathsf P)=6f_3.
\] 

To prove (\ref{eq:loc}), fix a vertex $u$ and put $p:=f_3(u,\mathsf P)$. 
Since a prism is a simple polytope, every facet of $\mathsf P/u$ is a triangle, so $\mathsf P/u$ is a simplicial $3$-polytope. 
Put $\widehat f_i=f_i(\mathsf P/u)$. 
Then $\widehat f_2=p$, and facet--edge incidences in $\mathsf P/u$ give  $3\widehat f_2=2\widehat f_1$. 
Consequently,  $p$ is even, and by Euler--Poincar\'e--Schl\"afli's equation for $\mathsf P/u$, namely $\widehat f_0-\widehat f_1+\widehat f_2=2$,  
 \[\deg_{\mathsf P}(u)=\widehat f_0=p/2+2.\]
 
The $p$ prism facets containing $u$ give $5p$ pairs 
\[
 (\mathsf F,w),
 \qquad \mathsf F\in\Fc_3(\mathsf P),\quad u,w\in\mathsf F,\quad w\ne u.
\]
Write $\bar v\in \mathsf P/u$ for the vertex  corresponding to an edge $[u,v]\in \mathsf P$.  
Since $f_3([u,v],\mathsf P)=\deg_{\mathsf P/u}(\bar v)$, the pairs for which $w$ is adjacent to $u$ account for 
\begin{equation*}
\sum_{\substack{v\in\Fc_0(\mathsf P)\\
                  [u,v]\in\Fc_1(\mathsf P)}}
       f_3([u,v],\mathsf P)=\sum_{\bar v\in \Fc_0(\mathsf P/u)}\deg_{\mathsf P/u}(\bar v)= 2\widehat f_1=3p.
\end{equation*}
The remaining $2p$ pairs have $w$ nonadjacent to $u$. 
For every such vertex $w$, the vertices $u$ and $w$ are opposite vertices of a quadrilateral ridge. 
This ridge is unique and is contained in exactly two facets of $\mathsf P$. 
Consequently, each cofacial nonneighbour $w$ of $u$ occurs in exactly two pairs. 
Let $c(u)$ be the number of nonneighbours of $u$ that lie with $u$ in a facet. 
It follows that $2p=2c(u)$.
The total number of nonneighbours of $u$ is $f_0-1-\deg_{\mathsf P}(u) =f_0-3-p/2$. 
Therefore $ p=c(u)\le f_0-3- p/2$, or equivalently 
\[
 3p\le 2f_0-6.
\]

If $f_0\le16$, then $3p\le26$, so $p\in\{4,6,8\}$; the first two figures are forced. 
For $p=8$, up to combinatorial equivalence,  there are exactly two simplicial $3$-polytopes $\mathsf P/u$, both with six vertices:
the octahedron, whose vertices all have degree four, and the connected sum of a tetrahedron and a triangular bipyramid along triangular facets, whose degree sequence is $(5,5,4,4,3,3)$.  
The numbers in \eqref{eq:figs} are precisely the vertex degrees of $\mathsf P/u$.
\end{proof}

We shall use the following lemma which guarantees that if we have two simple vertices in a triangle face, then the polytope can be decomposed.

\begin{lemma}\label{lem:split}
Let $\mathsf P$ be a prismatical 4-polytope with $f_3(\mathsf P)\ge10$.
If a triangular ridge of $\mathsf P$ has at least two simple vertices, then $\mathsf P\cong(\Delta^2\times\Delta^2)\cs\mathsf Q$, where $\mathsf Q$ is a prismatical 4-polytope satisfying \[
 f_3(\mathsf Q)=f_3(\mathsf P)-4,
 \qquad f_0(\mathsf P)=f_0(\mathsf Q)+3.
\]
\end{lemma}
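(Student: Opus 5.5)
Let $\mathsf T=\conv\{a,b,c\}$ be a triangular ridge with $a,b$ simple (so $f_3(a,\mathsf P)=f_3(b,\mathsf P)=4$ and $\deg a=\deg b=4$). The plan is to show that the facets around $a$ and $b$ form a copy of $\mathsf B$ with one facet removed, so that cutting along a suitable hyperplane exhibits $\mathsf P$ as $\mathsf B\cs\mathsf Q$.

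\textbf{Local structure at a simple vertex.} By \eqref{eq:figs}, $\mathsf P/a$ is a tetrahedron. Hence $a$ lies in exactly four prisms and every edge at $a$ lies in three of them. In each prism containing $a$, the vertex $a$ lies on one triangle and one "vertical" edge. The two facets containing $\mathsf T$ are prisms $\mathsf F_1,\mathsf F_2$ with $\mathsf T$ as a base. Let $a',b',c'$ and $a'',b'',c''$ be the opposite bases in $\mathsf F_1$ and $\mathsf F_2$. Then $a$ has neighbours $b,c,a',a''$. The other two facets at $a$ contain the quadrilateral ridges $aa'b'b$, $aa'c'c$, $aa''b''b$, $aa''c''c$ in some pairing. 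Since the tetrahedral link at $a$ makes every triple of edges at $a$ span a 2-face, there is a triangular ridge $\conv\{a,a',a''\}$. The same holds for $b$: there is a triangle $\conv\{b,b',b''\}$, and the facet $\mathsf G$ containing $a,a',a'',b,b',b''$ is a prism with bases $aa'a''$ and $bb'b''$.

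\textbf{Identify the $\mathsf B$-piece.} The facets containing $a$ or $b$ are $\mathsf F_1$, $\mathsf F_2$, $\mathsf G$, the prism on $aa'a''$ and $cc'c''$, and possibly others. Simplicity of $a$ and $b$ should force exactly five facets: $\mathsf F_1,\mathsf F_2,\mathsf G$, a prism $\mathsf H_1$ through $a,a',a'',c,c',c''$, and a prism $\mathsf H_2$ through $b,b',b'',c,c',c''$. The combinatorial step is to check that these five facets are glued exactly like five of the six facets of $\mathsf B$. Here $\mathsf B$ has vertex set $\{a,b,c\}\times\{\cdot,',''\}$, and the missing sixth facet is the prism on $\{c,c',c''\}\times\{$the three rows$\}$ restricted appropriately. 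Concretely, the missing facet should be $\conv\{a',b',c',a'',b'',c''\}$, the prism over $a'b'c'$ and $a''b''c''$.

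\textbf{Construct $\mathsf Q$.} Let $H$ be the hyperplane through $a',b',c',a'',b'',c''$. These six points are affinely dependent in the right way: they lie in the 3-dimensional affine span of a combinatorial prism. The main obstacle is proving that they really lie on a common hyperplane and that this hyperplane separates $\{a,b,c\}$ from the remaining vertices. I would first try projective uniqueness. The five facets near $\mathsf T$ form a subcomplex whose realisation is determined up to projective transformation, as in $\mathsf B$, which is projectively unique with six facets. Combined with the fact that each of $\mathsf F_1,\mathsf F_2,\mathsf G$ meets $H$ along a ridge, this gives the required hyperplane. Truncating $\mathsf P$ by $H$ then gives $\mathsf Q:=\mathsf P\cap H^-$. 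Its facets are the facets of $\mathsf P$ not among the four containing $a$ or $b$, together with the new facet $\mathsf P\cap H$, which is a prism. Thus $\mathsf Q$ is prismatical and $\mathsf P=\mathsf B\cs\mathsf Q$.

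\textbf{Counting and nondegeneracy.} The hypothesis $f_3\ge10$ guarantees that $c$, or the primed vertices, are not all in $\mathsf B$ itself. Otherwise $\mathsf P=\mathsf B$ with six facets, and eight facets are excluded by (iv). So $\mathsf Q$ is a genuine $4$-polytope. The counts then follow from Remark~\ref{rem:connected-sum-faces}: $f_3(\mathsf P)=6+f_3(\mathsf Q)-2$, and $f_0(\mathsf P)=9+f_0(\mathsf Q)-6$.
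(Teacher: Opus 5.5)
Your overall plan matches the paper's: local structure at the simple vertices, five prisms, a hyperplane through the six outer vertices, then truncation. However, the step you yourself call the main obstacle is not actually proved. Projective uniqueness of $\mathsf B$ says that any \emph{polytope} combinatorially equivalent to $\mathsf B$ is projectively equivalent to it. Here you only have five prisms forming a $3$-ball in $\partial\mathsf P$, and the claim that this ball is closed up by a flat sixth prism $\conv\{a',b',c',a'',b'',c''\}$ is exactly what has to be shown. So the appeal is circular, and your next sentence (``each of $\mathsf F_1,\mathsf F_2,\mathsf G$ meets $H$ along a ridge'') already presupposes $H$.

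The missing idea is elementary, and it is how the paper obtains $H$. The quadrilateral $2$-faces $[a',a'',b'',b']\subset\mathsf G$, $[a',a'',c'',c']\subset\mathsf H_1$ and $[b',b'',c'',c']\subset\mathsf H_2$ make the lines $a'a''$, $b'b''$, $c'c''$ pairwise coplanar. Hence they are coplanar, concurrent or parallel, and in every case the six points lie in a $3$-dimensional affine subspace $H$.

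The separation step also has an unaddressed ingredient: you never show that $c$ is simple. Without this, $c$ could have further neighbours or facets, so $H$ need not cut off exactly $\{a,b,c\}$, the cut-off piece need not be $\mathsf B$, and the facet count for $\mathsf Q$ would fail. It is quick to supply. $\mathsf H_1$ and $\mathsf H_2$ both contain the triangle $[c,c',c'']$, so in $\mathsf P/c$ the four triangles coming from $\mathsf F_1,\mathsf F_2,\mathsf H_1,\mathsf H_2$ use all four triples of the vertices corresponding to $ca,cb,cc',cc''$. They already form a closed $2$-sphere, so $\mathsf P/c$ is that tetrahedron.

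With $c$ simple, every neighbour of $\mathsf T$ outside $\mathsf T$ lies in $H$. Next, $\mathsf P\cap H$ is the prism on the six outer vertices: its relative boundary consists of faces of $\mathsf P$, and $H$ cannot support $\mathsf P$ because $f_3(\mathsf P)>6$. Since the vertices strictly on one side of a hyperplane induce a connected subgraph of $G(\mathsf P)$, the only vertices on the side of $\mathsf T$ are $a,b,c$. You state none of this.

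There are also smaller slips:
\begin{itemize}
\item A tetrahedral vertex figure means every \emph{pair} of edges at $a$ spans a $2$-face and every \emph{triple} lies in a facet, not that every triple spans a $2$-face.
\item The triangle $[a,a',a'']$ comes from the facet through $aa',aa'',ab$, whose $2$-faces at $a$ containing $ab$ are the quadrilaterals $[a,a',b',b]$ and $[a,a'',b'',b]$. The same argument forces the five facets at $a$ or $b$ outright, so they are not merely what simplicity ``should'' give.
\item $\mathsf Q$ drops five facets of $\mathsf P$, not four.
\end{itemize}
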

\begin{proof}
We use the labels of Figure~\ref{fig:B-BcsB-subgraphK}(a).
Let $\mathsf T=[b_1,b_2,b_3]$ be a triangular ridge of $\mathsf P$ such that $b_1$ and $b_2$ are simple vertices, and let $\mathsf F^{+}$ and $\mathsf F^{-}$ be the two facets of $\mathsf P$ containing $\mathsf T$.
Since $\mathsf F^{+}$ is a prism with $\mathsf T$ as one of its triangles, its other triangle has vertices $a_1,a_2,a_3$, labeled so that $a_ib_i$ is an edge for each $i$.
Similarly, the other triangle of $\mathsf F^{-}$ has vertices $c_1,c_2,c_3$, with $c_ib_i$ an edge for each $i$.

Since $b_1$ is simple, it has exactly four neighbours, namely $a_1$, $c_1$, $b_2$ and $b_3$, and any three of the edges at $b_1$ lie in a common facet.
Let $\mathsf G$ be the facet containing the edges $a_1b_1$, $b_1c_1$ and $b_1b_2$; it is a prism by assumption.
The quadrilaterals $[a_1,a_2,b_2,b_1]$ of $\mathsf F^{+}$ and $[b_1,b_2,c_2,c_1]$ of $\mathsf F^{-}$ are the $2$-faces at $b_1$ spanned by two of these edges, so they are faces of $\mathsf G$ as well.
A prism is determined by two of its quadrilaterals sharing an edge, so the triangles of $\mathsf G$ are $[a_1,b_1,c_1]$ and $[a_2,b_2,c_2]$.
The same argument applied at $b_1$ with $b_3$ in place of $b_2$, and at $b_2$ with $b_3$ in place of $b_1$, yields three facets $\mathsf G_1$, $\mathsf G_2$ and $\mathsf G_3$ with vertex sets $\Fc_0(\mathsf G_1)=\set{a_1,a_2,b_1,b_2,c_1,c_2}$, $\Fc_0(\mathsf G_2)=\set{a_1,a_3,b_1,b_3,c_1,c_3}$ and $\Fc_0(\mathsf G_3)=\set{a_2,a_3,b_2,b_3,c_2,c_3}$.

The vertex $b_3$ is simple as well.
It lies in the four prisms $\mathsf F^{+}$, $\mathsf F^{-}$, $\mathsf G_2$ and $\mathsf G_3$, in each of which it has exactly three neighbours, and each three of the edges $b_3a_3$, $b_3c_3$, $b_3b_1$, $b_3b_2$ lie in one of these prisms.
Hence the four corresponding triangles of the vertex figure of $\mathsf P$ at $b_3$ form the boundary of a tetrahedron, so the vertex figure is a tetrahedron.
In particular, each $b_i$ lies in exactly four facets, and the facets of $\mathsf P$ containing some $b_i$ are precisely $\mathsf F^{+}$, $\mathsf F^{-}$, $\mathsf G_1$, $\mathsf G_2$ and $\mathsf G_3$.

Finally we show that $\mathsf P$ is a connected sum of $\mathsf B'=\operatorname{conv}\set{a_i,b_i,c_i : i=1,2,3}$ and $\mathsf Q=\operatorname{conv}\big(\Fc_0(\mathsf P)\setminus\set{b_1,b_2,b_3}\big)$ along $\mathsf R=\operatorname{conv}\set{a_1,a_2,a_3,c_1,c_2,c_3}$.
The triangles $[a_1,a_2,a_3]$ and $[c_1,c_2,c_3]$ are faces of $\mathsf F^{+}$ and $\mathsf F^{-}$, and for $i<j$ the quadrilateral $[a_i,a_j,c_j,c_i]$ is a face of the prism $\mathsf G_k$ containing both $b_i$ and $b_j$.
In particular, any two of the lines $a_1c_1$, $a_2c_2$, $a_3c_3$ are coplanar, so the three lines are coplanar, concurrent or parallel, and in every case the six points lie in a $3$-dimensional affine subspace.
Hence $\mathsf R$ is a triangular prism, and its boundary is the union of these five faces of $\mathsf P$.

Put $H:=\aff\mathsf R$. 
Then $\mathsf P\cap H=\mathsf R$. 
Since $\mathsf T$ is  disjoint from $\mathsf R$, it lies in one open halfspace bounded by $H$. 
Every neighbour of $\mathsf T$ outside $\mathsf T$ lies in $H$, so every vertex outside $\mathsf T\cup\mathsf R$ lies in the opposite open halfspace. 
Hence $H$ supports $\mathsf Q$ at $\mathsf R$. 
Since $\mathsf P$ has at least ten vertices, $\mathsf Q$ has a vertex outside $H$; thus $\mathsf Q$ is $4$-dimensional and $\mathsf R$ is a facet of it.

Then $\mathsf B'$ is bounded by $\mathsf F^{+}$, $\mathsf F^{-}$, $\mathsf G_1$, $\mathsf G_2$, $\mathsf G_3$ and $\mathsf R$, so $\mathsf B'\cong\Delta^2\times\Delta^2$.
The facets of $\mathsf Q$ are $\mathsf R$ together with the facets of $\mathsf P$ that contain no $b_i$, so $\mathsf Q$ is prismatical, $f_3(\mathsf Q)=f_3(\mathsf P)-5+1=f_3(\mathsf P)-4$, and $f_0(\mathsf Q)=f_0(\mathsf P)-3$.
\end{proof}

\subsection{Small number of facets}

We shall see that when the number of facets is small, a prismatical 4-polytope is a connected sum of copies of $ \mathsf{B} $.
However first we show a nonexistence result.

\begin{proposition}
    \label{prop:no-eight}There is no prismatical $4$-polytope with exactly eight facets.
\end{proposition}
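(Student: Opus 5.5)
The plan is to locate a triangular ridge with two simple vertices and then rerun the splitting argument of Lemma~\ref{lem:split}. Lemma~\ref{lem:split} itself requires $f_3\ge10$, so it cannot be quoted directly, but in the case $f_3=8$ its construction leads to a contradiction.

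\textbf{Counting.} Suppose $\mathsf P$ is prismatical with $f_3=8$. By Lemma~\ref{lem:dim4-basic}, $\sum_u f_3(u,\mathsf P)=48$, and each $f_3(u,\mathsf P)$ is even and at least $4$. Hence $f_0\le 12$.

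Each prism has two triangular ridges and each ridge lies in two facets, so $\mathsf P$ has exactly $8$ triangular ridges. At a vertex $u$, each facet through $u$ contains exactly one triangle through $u$, and each triangle lies in two facets. Therefore $u$ lies in exactly $f_3(u,\mathsf P)/2$ triangular ridges; in particular a simple vertex lies in exactly two.

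\textbf{Finding the ridge.} Assume no triangular ridge contains two simple vertices, and let $s$ be the number of simple vertices. Then $2s\le 8$, so $s\le4$. The remaining $f_0-s$ vertices have $f_3(u,\mathsf P)\ge6$, so
\[
48\ge 4s+6(f_0-s),
\]
which gives $f_0\le (48+2s)/6\le 56/6$, i.e.\ $f_0\le 9$.

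On the other hand, the inequality $3f_3(u,\mathsf P)\le 2f_0-6$ from \eqref{eq:loc} shows that a nonsimple vertex, with $f_3(u,\mathsf P)\ge6$, forces $f_0\ge12$. So with $f_0\le9$ every vertex is simple. Then $4f_0=48$ gives $f_0=12$, contradicting $f_0\le 9$. (Alternatively, a simple $\mathsf P$ has six facets by Proposition~\ref{prop:smallest-prismatical}.) Hence some triangular ridge $\mathsf T=[b_1,b_2,b_3]$ has two simple vertices.

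\textbf{Splitting.} Now repeat the first part of the proof of Lemma~\ref{lem:split}, which does not use $f_3\ge10$. It shows that $b_3$ is also simple and that the facets containing some $b_i$ are exactly $\mathsf F^{\pm},\mathsf G_1,\mathsf G_2,\mathsf G_3$. It also shows that the six vertices $a_i,c_i$ span a triangular prism $\mathsf R$ lying in a hyperplane $H$, whose boundary is a union of faces of $\mathsf P$. Moreover, all vertices outside $\mathsf T\cup\mathsf R$ lie strictly on the side of $H$ opposite $\mathsf T$.

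Only $8-5=3$ facets of $\mathsf P$ avoid $\mathsf T$. We distinguish two cases.
\begin{enumerate}[(a)]
\item If $\mathsf P$ has no vertex outside $\mathsf T\cup\mathsf R$, then $\mathsf P=\conv(\mathsf T\cup\mathsf R)\cong\Delta^2\times\Delta^2$, which has $6$ facets, not $8$.
\item Otherwise $\mathsf Q=\conv(\Fc_0(\mathsf P)\setminus\mathsf T)$ is a $4$-polytope with facet $\mathsf R$. Its facets are $\mathsf R$ together with the three facets of $\mathsf P$ avoiding $\mathsf T$, so $\mathsf Q$ has only four facets. A $4$-polytope has at least five facets, which is a contradiction.
\end{enumerate}

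The step I expect to need the most care is this final one. One must check that the facets of $\mathsf Q$ really are $\mathsf R$ plus the facets of $\mathsf P$ missing $\mathsf T$. This should follow exactly as in Lemma~\ref{lem:split} from $\mathsf P\cap H=\mathsf R$ and the strict separation of vertices by $H$.
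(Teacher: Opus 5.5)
Your proof is correct, but it takes a different route from the paper.

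\textbf{The paper's route.} The paper passes to the dual. $\mathsf P^*$ has eight vertices, each of degree five (Lemma~\ref{lem:prismatical-dual-basics}), so every facet of $\mathsf P^*$ is a simple $3$-polytope with four or six vertices. A six-vertex facet would force each of the two remaining vertices to be adjacent to all six of its vertices, which is impossible. Hence $\mathsf P^*$ is simplicial and $\mathsf P$ is simple. Proposition~\ref{prop:smallest-prismatical} then gives $\Delta^2\times\Delta^2$, which has only six facets; that proposition rests on Barnette's lower bound theorem and McMullen's classification of $d$-polytopes with $d+2$ facets.

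\textbf{Your route.} You stay in the primal. The local bound \eqref{eq:loc} and the vertex--facet count produce a triangular ridge with two simple vertices. You then rerun the cutting argument of Lemma~\ref{lem:split}. You correctly see that the hypothesis $f_3\ge10$ is used there only to make $\mathsf Q$ four-dimensional, and you treat the nine-vertex case separately. The contradiction is that $\mathsf Q$ would have only four facets.

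\textbf{Trade-off.} Your argument avoids the lower bound theorem and McMullen's result. The price is reusing the geometric splitting construction outside its stated hypothesis. That construction includes the implicit step that the vertices of $\mathsf P$ in an open halfspace induce a connected subgraph.

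\textbf{A shortcut.} Your counting proves more than you use. With no assumption on ridges, you already have $f_0\le 12$. A nonsimple vertex forces $f_0\ge12$, and then $\sum_u f_3(u,\mathsf P)\ge 4\cdot 11+6=50>48$. So $\mathsf P$ is simple outright; this is the same reasoning the paper uses in Theorem~\ref{thm:no-prime-ten}. You could therefore finish immediately with Proposition~\ref{prop:smallest-prismatical}. Alternatively, you could keep your splitting ending and apply it to any triangular ridge, since all its vertices are simple, and skip the pigeonhole step.
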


\begin{proof} Assume that such a polytope $\mathsf P$ exists, and let $\mathsf Q=\mathsf P^\ast$ be its dual polytope.  
Every facet of $\mathsf Q$ is a simple $3$-polytope (Lemma~\ref{lem:prismatical-dual-basics}), and thus has an even number of vertices.
A facet of the eight-vertex polytope $\mathsf Q$ has at most seven vertices.
Consequently, every facet of $\mathsf Q$ has either four or six vertices.

We claim that a six-vertex facet is impossible.
Suppose that $\mathsf F$
has six vertices, leaving exactly two vertices $x,y$ outside $\mathsf F$.
Each vertex of $\mathsf F$ has three neighbours in $\mathsf F$ and degree five in $\mathsf Q$ (Lemma~\ref{lem:prismatical-dual-basics}), and hence must be adjacent to both $x$ and $y$.
It follows that $x$ is adjacent to all six vertices of $\mathsf F$,
contradicting $\deg_{\mathsf Q}(x)=5$.

Every facet of $\mathsf Q$ therefore has four vertices, so $\mathsf Q$ is a simplicial $4$-polytope and $\mathsf P$ is a simple polytope.
Proposition~\ref{prop:smallest-prismatical} gives that $\mathsf P$ is $\Delta^2 \times \Delta^2$.
This contradiction shows that  $\mathsf P$ cannot exist.
\end{proof}

Next, we deal with prismatical 4-polytopes with 10 facets. 

\begin{theorem} \label{thm:no-prime-ten}
Every prismatical 4-polytope with ten facets is combinatorially isomorphic to a connected sum of two copies of $\Delta^2\times\Delta^2$. 
In particular, there is no connected-sum-prime example.
\end{theorem}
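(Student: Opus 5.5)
The plan is to reduce to Lemma~\ref{lem:split}. If some triangular ridge of $\mathsf P$ has at least two simple vertices, then Lemma~\ref{lem:split} gives $\mathsf P\cong\mathsf B\cs\mathsf Q$ with $\mathsf Q$ prismatical and $f_3(\mathsf Q)=6$. By Proposition~\ref{prop:smallest-prismatical}, $\mathsf Q\cong\mathsf B$, which settles the theorem. Moreover, $\mathsf B$ is facet-transitive and its symmetries act transitively on the combinatorial automorphisms of a facet, so all gluings give the same combinatorial type. The real work is therefore to show that, when $f_3=10$, such a triangle must exist.

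\textbf{Step 1: bound $f_0$.} Count with Lemma~\ref{lem:dim4-basic}: $\sum_u f_3(u,\mathsf P)=60$, and each $f_3(u,\mathsf P)\ge4$ is even, so $f_0\le15$. For a lower bound, every vertex has $3f_3(u,\mathsf P)\le 2f_0-6$, so $f_0\ge9$. A facet count should also give one: $10$ facets with six vertices each, with each vertex in at most $\lfloor (2f_0-6)/3\rfloor$ facets. Since $f_0\le16$, every vertex has $f_3(u,\mathsf P)\in\{4,6,8\}$, with the vertex figures listed in \eqref{eq:figs}. Writing $n_4,n_6,n_8$ for the numbers of vertices of each type, we have $4n_4+6n_6+8n_8=60$ and $n_4+n_6+n_8=f_0$. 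Nonsimple vertices contribute at least $2$ extra each, so $2n_6+4n_8=60-4f_0$. When $f_0$ is large, most vertices are simple: for $f_0=15$ all vertices are simple, contradicting Proposition~\ref{prop:smallest-prismatical}, so $f_0\le14$. For $f_0=12$ the excess is $12$.

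\textbf{Step 2: find a good triangle.} The polytope has $f_3=10$ facets, each with $2$ triangles, and each triangular ridge lies in $2$ facets, so there are exactly $10$ triangular ridges. A nonsimple vertex of type $6$ has a bipyramid vertex figure, hence lies in a bounded number of triangles. I would count vertex--triangle incidences: a vertex $u$ lies in as many triangles as there are facets through $u$, since in each prism containing $u$ exactly one triangle contains $u$ and each triangle lies in two facets. So $u$ lies in $f_3(u,\mathsf P)/2$ triangles, and the total number of incidences is $30$. If every triangle contains at most one simple vertex, then the simple vertices lie in at most $10$ triangle incidences, while each lies in $2$ triangles, so $n_4\le5$. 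The nonsimple vertices then need $n_6\cdot3+n_8\cdot4\ge20$ incidences. Combining this with $4n_4+6n_6+8n_8=60$ and the bound $f_0\ge9$ (or a sharper lower bound from the nonadjacency inequality) should leave only a few numerical cases, possibly $n_4=0$ together with all-nonsimple configurations.

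\textbf{Step 3: the remaining cases (main obstacle).} These are configurations with few simple vertices. I would pass to the dual $\mathsf P^*$, which has $10$ vertices, each of degree $5$ (Lemma~\ref{lem:prismatical-dual-basics}), and simple facets. I would then argue as in Proposition~\ref{prop:no-eight}. A facet of $\mathsf P^*$ with many vertices leaves few outside vertices, and degree $5$ forces adjacencies with them, giving a contradiction. This restricts the facets of $\mathsf P^*$ to tetrahedra and triangular prisms, and perhaps cubes or pentagonal prisms, which must be excluded by the same neighbour count. Dually, these correspond to simple vertices and to vertices of type $6$. This should force $n_8=0$ and $n_4$ large enough to contradict Step 2. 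I expect the exclusion of octahedral and stacked vertex figures here to be the most delicate part of the argument.
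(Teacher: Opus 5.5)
Your reduction to Lemma~\ref{lem:split} and your Step~2 incidence count (each simple vertex lies in two of the ten triangular ridges, so if no triangular ridge has two simple vertices then $n_4\le5$) are what the paper uses. The argument is not closed, though. Step~2 ends with unspecified ``numerical cases'', and Step~3 is a plan rather than a proof.

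The missing idea is to apply $3f_3(u,\mathsf P)\le 2f_0-6$ from \eqref{eq:loc} at a \emph{nonsimple} vertex, not only at a simple one. If $f_3(u,\mathsf P)=8$, this gives $f_0\ge15$; you already observed that $f_0=15$ forces every vertex to be simple, so $n_8=0$. Since $\mathsf P$ is not simple (Proposition~\ref{prop:smallest-prismatical}), some vertex has $f_3(u,\mathsf P)=6$, so $f_0\ge12$. Then $4n_4+6n_6=60$ and $n_4+n_6=f_0$ give $n_4=3f_0-30\ge6>5$. This contradicts your bound $n_4\le5$, so some triangular ridge has two simple vertices, and Lemma~\ref{lem:split} together with Proposition~\ref{prop:smallest-prismatical} finishes. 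That is the paper's entire proof; the ``delicate'' obstacle you anticipate does not exist.

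Your Step~3 could also not supply this bound. In $\mathsf P^*$ there are ten vertices, all of degree five, and edges between vertices of a facet lie in that facet. So a facet with $p$ vertices sends $2p$ edges to the $10-p$ outside vertices, giving $2p\le5(10-p)$. This does exclude $p=8$, i.e.\ $n_8=0$. For $p=6$, however, it only gives $12\le20$, with no contradiction.

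The configurations surviving $n_8=0$ and $n_4\le5$ are $(f_0;n_4,n_6)=(10;0,10)$ and $(11;3,8)$, and nothing in your Step~3 rules them out. The claim that the dual argument ``should force $n_4$ large enough'' is unsupported. What excludes these cases is the primal count behind \eqref{eq:loc}: a vertex in $p$ facets has exactly $p$ cofacial nonneighbours, each opposite it in a quadrilateral ridge. This forces $f_0\ge3+3p/2$.
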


\begin{proof}
Here $f_3=10$, so $\sum_{u\in \Fc_0(\mathsf P)} f_3(u,\mathsf P)=60$ by \eqref{eq:counts}.
Since $f_3(u,\mathsf P)\ge4$, we have $f_0\le15$.
Lemma~\ref{lem:dim4-basic} therefore gives $f_3(u,\mathsf P)\in\{4,6,8\}$.
If some vertex were incident with 8 facets, then \eqref{eq:loc} would give $24\le2f_0-6$, and hence $f_0=15$.
But then $\sum_{u\in \Fc_0(\mathsf P)} f_3(u,\mathsf P)=60=4f_0$, so every  vertex would be simple, contradicting Proposition~\ref{prop:smallest-prismatical}.
Hence every vertex of $\mathsf P$ is incident with either $4$ or $6$ facets. 


Put $n_i:=\#\{u\in\Fc_0(\mathsf P):f_3(u,\mathsf P)=i\}$. 
By Proposition~\ref{prop:smallest-prismatical}, $\mathsf P$ is not simple, so some vertex is incident with six facets. 
Lemma~\ref{lem:dim4-basic} gives $18\le2f_0-6$, hence $f_0\ge12$. 
Since $4n_4+6n_6=60$  (see \eqref{eq:counts}) and $n_4+n_6=f_0$, we obtain $n_4=3f_0-30\ge6$. 
Each simple vertex belongs to exactly two triangular ridges. 
Likewise, the ten prism facets give exactly ten triangular ridges. 
Thus there are $2n_4\ge12>10$ incidences between simple vertices and triangular ridges. 
By the pigeonhole principle, some triangular ridge contains at least two simple vertices. 
Lemma~\ref{lem:split}, together with Proposition~\ref{prop:smallest-prismatical}, therefore shows that $\mathsf P$ is the connected sum of two copies of $\mathsf B$.

\end{proof}

\begin{remark}\label{rmk:unique-ten}
The connected sum of two copies of $\mathsf B=\Delta^2\times\Delta^2$ has a unique combinatorial type. Automorphisms of $\mathsf B$ act transitively on its facets, and every automorphism of a prism facet extends to $\mathsf B$ by permuting the vertices of the two triangle factors.
\end{remark}

It turns out that a similar statement is true for prismatical $4$-polytopes with fourteen facets. 
The proof is an involved case-by-case analysis and it is included as an appendix.

\subsection{Infinite families of prime prismatical 4-polytopes}
\label{subsec:polytope-families}

The purpose of this section is to construct connected-sum-prime prismatical 4-polytopes.
%
The idea is to consider a \emph{polytopal fibre bundle} over a polytope.
This same construction appeared in \cite{PfePilSan12} in the context of graphs of polytopes.
Here we present it from a different point of view.
We first develop this idea in general and then specialise to the four dimensional case.

\begin{definition}\label{def:fibration}
Let $\mathsf{R}\subset\mathbb{R}^{m}$ and $\mathsf{S}\subset\mathbb{R}^{n}$ be full-dimensional polytopes, called the \textbf{fibre} and the \textbf{base}, respectively.
We require that the fibre contains the origin in its interior (see Example~\ref{ex:fibre_origin} below), and for notational convenience we require the same of the base.
Let $\psi\colon\mathsf{S}\to\mathbb{R}$ be a concave piecewise linear function that is identically one on the boundary of $\mathsf{S}$; by concavity, $\psi\geq 1$ on all of $\mathsf{S}$.
The \textbf{total space} is
\begin{equation}\label{eq:total_space}
	\mathsf{T} \coloneqq \left\{ (x,y) \in \mathbb{R}^{m} \times \mathbb{R}^{n} ~:~ y\in \mathsf{S},\ x\in \psi(y) \mathsf{R} \right\},
\end{equation}
and the projection $\mathsf{T}\to\mathsf{S}$ onto the second factor is called the \textbf{fibration map}.
\end{definition}

By construction, the fibre of the fibration map over a point $y\in\mathsf{S}$ is the dilate $\psi(y)\mathsf{R}$.
In particular, if $\psi$ is identically one, the total space is the Cartesian product $\mathsf{R}\times\mathsf{S}$.

\begin{example}\label{ex:fibre_origin}
Consider fibrations of a segment over a segment, as in Figure~\ref{fig:segment_over_segment}.
When $\psi\equiv 1$ the total space is a rectangle, the product of the two segments.
If instead $\psi$ equals one at the endpoints of the base and two at its midpoint, the total space is a hexagon, provided the fibre contains the origin in its interior.
Otherwise the set defined by Equation~\eqref{eq:total_space} is not even convex.
For a three-dimensional example, Figure~\ref{fig:triangle_over_segment} shows the fibration of a triangle over a segment.
\end{example}

\begin{figure}[ht]
\centering
\begin{tikzpicture}[scale=0.8, line join=round]
  \begin{scope}
    \draw[dashed, gray] (-0.5,0) -- (2.5,0) node[right] {\small $0$};
    \filldraw[fill=blue!12, draw=black, thick]
      (0,-1) -- (2,-1) -- (2,1) -- (0,1) -- cycle;
    \foreach \t in {0, 1, 2}
      \draw[very thick, red] (\t,-1) -- (\t,1);
    \draw[thick] (0,-2.8) -- (2,-2.8);
    \foreach \t in {0, 1}
      \fill (2*\t,-2.8) circle (1.5pt) ;
    \node at (1,-4) {$\mathsf{R}=[-1,1]$};
  \end{scope}

  \begin{scope}[xshift=4.5cm]
    \draw[dashed, gray] (-0.5,0) -- (2.5,0) node[right] {\small $0$};
    \filldraw[fill=blue!12, draw=black, thick]
      (0,-1) -- (1,-2) -- (2,-1) -- (2,1) -- (1,2) -- (0,1) -- cycle;
    \foreach \t/\h in {0/1, 1/2, 2/1}
      \draw[very thick, red] (\t,-\h) -- (\t,\h);
    \draw[thick] (0,-2.8) -- (2,-2.8);
    \foreach \t/\v in {0/1, 1/2, 2/1}
      \fill (\t,-2.8) circle (1.5pt);
    \node at (1,-4) {$\mathsf{R}=[-1,1]$};
  \end{scope}

  \begin{scope}[xshift=9cm, yshift=-2cm]
    \draw[dashed, gray] (-0.5,0) -- (2.5,0) node[right] {\small $0$};
    \filldraw[fill=blue!12, draw=black, thick]
      (0,1) -- (1,2) -- (2,1) -- (2,2) -- (1,4) -- (0,2) -- cycle;
    \foreach \t/\h in {0/1, 1/2, 2/1}
      \draw[very thick, red] (\t,\h) -- (\t,2*\h);
    \fill (1,2) circle (2pt);   
    \draw[thick] (0,-0.8) -- (2,-0.8);
    \foreach \t/\v in {0/1, 1/2, 2/1}
      \fill (\t,-0.8) circle (1.5pt);
    \node at (1,-2) {$\mathsf{R}=[1,2]$};
  \end{scope}
\end{tikzpicture}
\caption{Fibrations of a segment over a segment. Left: the trivial fibration, $\psi\equiv 1$. Center and right: $\psi$ equals $1$ at the endpoints of the base and $2$ at its midpoint. In the center the fibre contains the origin in its interior; on the right it does not, and the total space fails to be convex.}
\label{fig:segment_over_segment}
\end{figure}

\begin{figure}[ht]
\centering
\begin{tikzpicture}[x={(1.6cm,0cm)}, y={(0.48cm,0.36cm)}, z={(0cm,0.8cm)},
                    line join=round]
  \coordinate (a0) at (0, 0, 1);
  \coordinate (b0) at (0,-1,-1);
  \coordinate (c0) at (0, 1,-1);
  \coordinate (a1) at (1, 0, 2);
  \coordinate (b1) at (1,-2,-2);
  \coordinate (c1) at (1, 2,-2);
  \coordinate (a2) at (2, 0, 1);
  \coordinate (b2) at (2,-1,-1);
  \coordinate (c2) at (2, 1,-1);

  \fill[blue!10] (a0)--(b0)--(b1)--(a1)--cycle;
  \fill[blue!10] (a1)--(b1)--(b2)--(a2)--cycle;
  \fill[blue!20] (a2)--(b2)--(c2)--cycle;

  \draw[dotted, gray] (0,0,0) -- (2,0,0);

  \draw[dashed] (c0)--(c1)--(c2);
  \draw[dashed, red!70] (b0)--(c0)--(a0);
  \draw[dashed, red!70] (b1)--(c1)--(a1);

  \draw[thick] (a0)--(a1)--(a2);
  \draw[thick] (b0)--(b1)--(b2);
  \draw[very thick, red] (a0)--(b0);
  \draw[very thick, red] (a1)--(b1);
  \draw[very thick, red] (a2)--(b2)--(c2)--cycle;

  \draw[thick] (0,0,-4) -- (2,0,-4);
  \foreach \t/\v in {0/1, 1/2, 2/1}
    \fill (\t,0,-4) circle (1.5pt) node[below] {\small $\psi=\v$};
\end{tikzpicture}
\caption{A triangle fibreed over a segment, with $\psi$ equal to $1$ at the
endpoints of the base and $2$ at its midpoint. The fibres over the endpoints
and the midpoint are drawn in red. The total space has two triangular facets
and six trapezoidal facets.}
\label{fig:triangle_over_segment}
\end{figure}

We now show that under the hypotheses of Definition~\ref{def:fibration} the total space is a polytope, and we describe its vertices and facets.
The proof also gives the vertex--facet incidences, which determine the whole face lattice.

The function $\psi$ induces a polyhedral subdivision $\mathcal{P}$ of the base whose maximal cells are the maximal regions on which $\psi$ is affine.
Since $\psi$ is constant on the boundary of $\mathsf{S}$, every proper face of $\mathsf{S}$ is a face of $\mathcal{P}$.
We write $\mathcal{C}$ for the set of maximal cells of $\mathcal{P}$ and $\mathcal{V}$ for its set of vertices.

\begin{theorem}[see also Proposition 2.7 of \cite{PfePilSan12}]\label{thm:total_space}
Under the hypotheses of Definition~\ref{def:fibration}, the total space $\mathsf{T}$ is an $(m+n)$-dimensional polytope.
Its vertices are the points in the set
\begin{equation}\label{eq:total_vertices}
	\left\{  (\psi(v)r, v) ~~:~ v\in \mathcal{V},\ r\in \mathcal{F}_{0}(\mathsf{R})\right\}
\end{equation}
Its facets are of two types:
\begin{enumerate}[{\rm (i)}]
	\item for each facet $\mathsf{F}$ of $\mathsf{S}$, the product $\mathsf{R}\times\mathsf{F}$, which is the preimage of $\mathsf{F}$ under the fibration map;
	\item for each cell $\mathsf{C}\in\mathcal{C}$ and each facet $\mathsf{G}$ of $\mathsf{R}$, the set of pairs $(x,y)$ with $y\in\mathsf{C}$ and $x\in\psi(y)\mathsf{G}$, which is combinatorially equivalent to $\mathsf{G}\times\mathsf{C}$.
\end{enumerate}
\end{theorem}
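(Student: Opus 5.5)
Our plan is to describe $\mathsf T$ by an explicit finite system of linear inequalities and read off its vertices and facets from that description. Write $\mathsf R=\{x:\langle a_G,x\rangle\le 1,\ \mathsf G\in\Fc_{m-1}(\mathsf R)\}$ (possible because $0\in\ri\mathsf R$) and $\mathsf S=\{y:\langle b_F,y\rangle\le 1,\ \mathsf F\in\Fc_{n-1}(\mathsf S)\}$. Concavity of $\psi$ gives $\psi=\min_{\mathsf C\in\mathcal C}\ell_{\mathsf C}$, where $\ell_{\mathsf C}$ is the affine function agreeing with $\psi$ on the cell $\mathsf C$. Since $\psi\ge1>0$, we have $x\in\psi(y)\mathsf R$ iff $\langle a_G,x\rangle\le\psi(y)$ for all $\mathsf G$, and this holds iff $\langle a_G,x\rangle\le \ell_{\mathsf C}(y)$ for all $\mathsf G,\mathsf C$. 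Hence
\[
\mathsf T=\{(x,y):\ \langle b_F,y\rangle\le1\ \forall \mathsf F,\ \ \langle a_G,x\rangle-\ell_{\mathsf C}(y)\le0\ \forall \mathsf G,\mathsf C\}.
\]
So $\mathsf T$ is a polyhedron. It is bounded because $y\in\mathsf S$ and $x\in(\max\psi)\mathsf R$. It is full-dimensional because $\mathsf R\times\mathsf S\subseteq \mathsf T$. Thus $\mathsf T$ is an $(m+n)$-polytope. Here the hypothesis that the origin lies in the interior of $\mathsf R$ is exactly what makes each condition linear; compare Example~\ref{ex:fibre_origin}.

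Next we check that each inequality defines a facet. For a facet $\mathsf F$ of $\mathsf S$, the face $\{\langle b_F,y\rangle=1\}\cap\mathsf T$ is $\{(x,y):y\in\mathsf F,\ x\in\psi(y)\mathsf R\}$. Since $\psi\equiv1$ on $\partial\mathsf S$, this equals $\mathsf R\times\mathsf F$, which has dimension $m+n-1$. For a pair $(\mathsf C,\mathsf G)$, the face cut out by $\langle a_G,x\rangle=\ell_{\mathsf C}(y)$ consists of the points with $\ell_{\mathsf C}(y)=\psi(y)$ and $x\in\psi(y)\mathsf G$; the second condition holds because $x\in\psi(y)\mathsf R$ and $\langle a_G,x\rangle=\psi(y)$. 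The first condition forces $y\in\mathsf C$, because $\mathsf C$ is a maximal region of affinity and $\ell_{\mathsf C}>\psi$ off $\mathsf C$ for a genuinely piecewise-linear concave function. This is the one point that needs some care, and we would check it by choosing a cell adjacent across each wall. The resulting set is $\{(x,y):y\in\mathsf C,\ x\in\psi(y)\mathsf G\}$. Over $\mathsf C$ the scaling $\psi$ is affine and positive, so the map $(x,y)\mapsto(x/\psi(y),y)$ is a projective transformation that is admissible on this set. It carries the set onto $\mathsf G\times\mathsf C$, which gives the combinatorial equivalence and dimension $(m-1)+n$. Distinct inequalities give distinct facets, since their supporting hyperplanes differ: the $\ell_{\mathsf C}$ are pairwise distinct and the $a_G$ are pairwise distinct. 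So the facet list is complete and irredundant.

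For the vertices, every point $(x,y)\in\mathsf T$ lies in the set $\{y\}\times\psi(y)\mathsf R$. We write $y$ as a convex combination of the vertices of a cell $\mathsf C$ containing it, and use that $\psi$ is affine on $\mathsf C$. Writing $x=\psi(y)r$ with $r\in\mathsf R$ a convex combination of vertices of $\mathsf R$, this expresses $(x,y)$ as a convex combination of the points $(\psi(v)r',v)$ with $v\in\mathcal V$ and $r'\in\Fc_0(\mathsf R)$. Conversely, each such point is a vertex. Take $v$ a vertex of some cell $\mathsf C$ and $r\in\mathsf G_1\cap\dots\cap\mathsf G_m$ as an intersection of facets of $\mathsf R$. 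The facets of type (ii) for $(\mathsf C',\mathsf G_i)$, with $\mathsf C'$ ranging over the cells containing $v$, together with those of type (i) when $v\in\partial\mathsf S$, cut out the single point. Their intersection pins $y=v$, since $v$ is a vertex of $\mathcal P$ and the maps $\ell_{\mathsf C'}$ agree only at $v$ near $v$. Once $y=v$ is fixed, the conditions force $x=\psi(v)r$. This is the main obstacle: one must verify that the intersection of these facets is exactly one point, which amounts to the claim that $v$ is a vertex of the subdivision $\mathcal P$. Finally, the vertex--facet incidences follow from the same description: $(\psi(v)r,v)$ lies in the facet of type (ii) for $(\mathsf C,\mathsf G)$ iff $v\in\mathsf C$ and $r\in\mathsf G$, and it lies in $\mathsf R\times\mathsf F$ iff $v\in\mathsf F$.
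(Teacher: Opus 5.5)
Your proof follows the paper's: the same inequality description $\langle b_{\mathsf F},y\rangle\le1$, $\langle a_{\mathsf G},x\rangle\le\ell_{\mathsf C}(y)$, the same arguments for boundedness and full-dimensionality, and the same computation of the equality set of each inequality. The paper builds ``$\ell$ agrees with $\psi$ exactly on $\mathsf C_\ell$'' into its irredundancy assumption; you justify it with adjacent cells, which is fine.

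The vertex part takes a slightly different route. The paper reads the vertices off as the union of the vertices of the facets. You instead show that $\mathsf T$ is the convex hull of the points $(\psi(v)r,v)$, using the product coefficients $\mu_v\nu_{r'}$, and that each such point is cut out by facets. This is more self-contained. It needs the local fact $\bigcap_{\mathsf C'\ni v}\mathsf C'=\{v\}$ for interior vertices $v$ of $\mathcal P$, together with type (i) facets at boundary vertices, which are vertices of $\mathsf S$. Both facts hold, and your sketch points in the right direction.

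One step is wrong as written. The map $(x,y)\mapsto(x/\psi(y),y)$ is not a projective transformation unless $\ell_{\mathsf C}$ is constant on $\mathsf C$. Its $y$-component is not divided by the denominator, so in homogeneous coordinates it would need the quadratic expression $y\,\ell_{\mathsf C}(y)$. Concretely, it bends segments of the lift along which $\ell_{\mathsf C}$ varies into curves.

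There are two easy fixes.
\begin{itemize}
\item Divide all coordinates: $(x,y)\mapsto(x,y)/\ell_{\mathsf C}(y)$. This is projective and admissible on the lift, since $\ell_{\mathsf C}=\psi\ge1$ over $\mathsf C$. It is nondegenerate, since $\ell_{\mathsf C}(0)\ge\psi(0)\ge1$. It maps the lift onto $\mathsf G\times\mathsf C'$, where $\mathsf C'$ is the projective image of $\mathsf C$ under $y\mapsto y/\ell_{\mathsf C}(y)$.
\item Keep your bijection and note that it preserves tightness of every defining constraint. On the lift these are $\langle a_{\mathsf G},x\rangle=\ell_{\mathsf C}(y)$, $\langle a_{\mathsf G'},x\rangle\le\ell_{\mathsf C}(y)$, and the inequalities of $\mathsf C$. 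They become $\langle a_{\mathsf G},x'\rangle=1$, $\langle a_{\mathsf G'},x'\rangle\le1$, and the same inequalities of $\mathsf C$. This already gives an isomorphism of face lattices.
\end{itemize}

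The paper asserts this combinatorial equivalence without proof. With either correction your argument is complete.
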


\begin{proof}
Since the fibre and the base contain the origin in their interiors and $\psi$ is concave and piecewise linear, there are finite sets $\mathrm{A}$ and $\mathrm{B}$ of linear functionals on $\mathbb{R}^m$ and $\mathbb{R}^n$, respectively, and a finite set $\mathrm{L}$ of affine functionals on $\mathbb{R}^n$, such that
\begin{align*}
	\mathsf{R} &= \left\{ x \in \mathbb{R}^{m} ~:~ \alpha(x) \leq 1 \text{ for all } \alpha \in \mathrm{A} \right\},\\
	\mathsf{S} &= \left\{ y \in \mathbb{R}^{n} ~:~ \beta(y) \leq 1 \text{ for all } \beta \in \mathrm{B} \right\},\\
	\psi(y) &= \min \left\{ \ell(y) ~:~ \ell \in \mathrm{L} \right\}.
\end{align*}
We take these descriptions to be irredundant: each $\alpha\in\mathrm{A}$ attains equality on a facet $\mathsf{G}_\alpha$ of $\mathsf{R}$, each $\beta\in\mathrm{B}$ on a facet $\mathsf{F}_\beta$ of $\mathsf{S}$, and each $\ell\in\mathrm{L}$ agrees with $\psi$ exactly on a cell $\mathsf{C}_\ell\in\mathcal{C}$, distinct functionals giving distinct cells.

With this notation we can rewrite the conditions in Equation \eqref{eq:total_space} as follows:
\begin{equation}
	\mathsf{T} = \left\{ (x,y) \in \mathbb{R}^{m} \times \mathbb{R}^{n} ~:~ \beta(y)\leq 1,\ \alpha(x) \leq \ell(y),\ (\alpha, \beta, \ell) \in \mathrm{A} \times \mathrm{B} \times \mathrm{L}\right\}.
\end{equation}
This is a finite system of linear inequalities, and its solution set is bounded because it is contained in $(\max\psi)\mathsf{R}\times\mathsf{S}$, so $\mathsf{T}$ is a polytope.
It is full-dimensional, since $\psi\geq 1$ implies $\mathsf{R}\times\mathsf{S}\subseteq\mathsf{T}$.

We now determine the face on which each inequality holds with equality, which shows that the description is irredundant and gives the facets.
There are two types of inequalities.
\begin{enumerate}
  \item First consider $ \beta(y) = 1 $.
Since $\psi=1$ on $\mathsf{F}_\beta$, the face of $\mathsf{T}$ where $\beta(y)=1$ is $\mathsf{R}\times\mathsf{F}_\beta$, of dimension $m+n-1$.
\item Now we consider $ \alpha(x) = \ell(y) $. This forces $ \psi(y) = \ell(y) $, which happens when $ y \in \mathsf{C}_{\ell} $, and $ x \in \psi(y) \mathsf{G}_{\alpha} $.
	The set of equality is the \emph{lift} of the cell $ \mathsf{C}_{\ell} $ along the facet $ \mathsf{G}_{\alpha} $ of the fibre; this lift is combinatorially  isomorphic to $\mathsf G_\alpha\times\mathsf C_\ell$.
	This is also $ m+n-1 $ dimensional.
\end{enumerate}

The vertices of $ \mathsf{T} $ are the union of the vertices of each of its facets.
From the above description it follows that the vertices are precisely the points in Equation \eqref{eq:total_vertices}.
\end{proof}

We now specialise to dimension four.

\begin{corollary}\label{cor:prismatical}
Let the fibre $\mathsf{R}$ be a triangle and the base $\mathsf{S}$ a polygon, and suppose that every maximal region of linearity of $\psi$ is a triangle, that is $\mathcal{P}$ is a triangulation of $\mathsf{S}$ (with interior vertices).
Then the total space $\mathsf{T}$ is a prismatical $4$-polytope.
\end{corollary}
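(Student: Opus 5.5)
The plan is to read off the facets directly from Theorem~\ref{thm:total_space}, with $m=2$ and $n=2$, and check that each one is combinatorially a triangular prism $\pface=\Delta^2\times\Delta^1$. First, the hypotheses of Definition~\ref{def:fibration} are assumed, so Theorem~\ref{thm:total_space} applies. It tells us that $\mathsf T$ is a $4$-polytope whose facets fall into exactly two families.

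\textbf{Type (i) facets.} For each edge $\mathsf F$ of the polygon $\mathsf S$ we get the facet $\mathsf R\times\mathsf F$. Since $\mathsf R$ is a triangle and $\mathsf F$ is a segment, this is $\Delta^2\times\Delta^1$, a simplicial prism of dimension $3$.

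\textbf{Type (ii) facets.} For each maximal cell $\mathsf C\in\mathcal C$ and each edge $\mathsf G$ of $\mathsf R$ we get a facet combinatorially equivalent to $\mathsf G\times\mathsf C$. By hypothesis $\mathcal P$ is a triangulation, so $\mathsf C$ is a triangle, and $\mathsf G$ is a segment. Hence $\mathsf G\times\mathsf C\cong\Delta^1\times\Delta^2\cong\pface$.

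Since these two types exhaust the facets, every facet of $\mathsf T$ is a simplicial prism, and $\mathsf T\in\mathscr P_4$.

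The only step needing care is the assertion in Theorem~\ref{thm:total_space}(ii) that the lifted cell $\{(x,y): y\in\mathsf C,\ x\in\psi(y)\mathsf G\}$ is \emph{combinatorially} equivalent to $\mathsf G\times\mathsf C$, rather than merely homeomorphic to it. I would check this directly in our setting. On $\mathsf C$ the function $\psi$ equals a single affine function $\ell$ with $\ell\ge1>0$. Writing $\mathsf G=[g_1,g_2]$, the lift is $\{(\ell(y)((1-t)g_1+tg_2),y): y\in\mathsf C,\ t\in[0,1]\}$.

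The map $(t,y)\mapsto(\ell(y)((1-t)g_1+tg_2),y)$ is a homeomorphism from $[0,1]\times\mathsf C$ onto the lift. It sends each vertex $(t_0,c)$, with $t_0\in\{0,1\}$ and $c$ a vertex of $\mathsf C$, to a point of the form $(\psi(v)r,v)$. It also sends each face $\{t_0\}\times\mathsf C'$ or $[0,1]\times\mathsf C'$, with $\mathsf C'$ a face of $\mathsf C$, to a polytope. Indeed $\{(\ell(y)g_i,y):y\in\mathsf C'\}$ is the affine image of $\mathsf C'$, and the region between two such graphs over $\mathsf C'$ is convex.

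Comparing with the inequality description in the proof of Theorem~\ref{thm:total_space}, these pieces are exactly the faces of the facet. The facet is cut out by $\alpha(x)=\ell(y)$, $\alpha'(x)\le\ell(y)$ for the two other edges $\alpha'$ of $\mathsf R$, and $y\in\mathsf C$. So the face lattice matches that of $\Delta^1\times\Delta^2$: a triangular prism with $6$ vertices $(\ell(c)g_i,c)$, two triangles, and three quadrilaterals.

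I expect no real obstacle beyond this bookkeeping, since it is essentially already contained in Theorem~\ref{thm:total_space}.
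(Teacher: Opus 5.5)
Your proposal is correct and follows the same route as the paper. Both apply Theorem~\ref{thm:total_space} with $m=n=2$ and observe that the two facet types, $\mathsf R\times\mathsf F$ and the lift $\cong\mathsf G\times\mathsf C$, are triangular prisms because $\mathsf F,\mathsf G$ are segments and $\mathsf C$ is a triangle; your extra check that the lifted cell is combinatorially (not merely topologically) $\mathsf G\times\mathsf C$ is sound, whereas the paper simply takes this from Theorem~\ref{thm:total_space}(ii).
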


\begin{proof}
By Theorem~\ref{thm:total_space}, each facet of $\mathsf{T}$ is either the product $\mathsf{R}\times\mathsf{F}$ of the triangle $\mathsf{R}$ with an edge $\mathsf{F}$ of $\mathsf{S}$, or is combinatorially equivalent to the product $\mathsf{G}\times\mathsf{C}$ of an edge $\mathsf{G}$ of $\mathsf{R}$ with a cell $\mathsf{C}\in\mathcal{C}$, which is a triangle by assumption.
In both cases the facet is a triangular prism.
\end{proof}

\begin{example}
We can fibre a triangle over a hexagon with a piecewise linear function as in Figure \ref{fig:triangulated_hexagon}. 
The total space for the fibration is a prismatical polytope with $10 \times 3 + 6 = 36$ facets.
\begin{figure}[ht]
\centering
\begin{tikzpicture}[scale=1.2, line join=round]
  \foreach \i in {0,...,5}
    \coordinate (v\i) at (60*\i:2);
  \foreach \j in {0,1,2}
    \coordinate (p\j) at (120*\j:0.8);

  \fill[blue!10] (v0)--(v1)--(v2)--(v3)--(v4)--(v5)--cycle;
  \fill[blue!25] (p0)--(p1)--(p2)--cycle;

  \foreach \j in {0,1,2}
    \foreach \k in {-1,0,1} {
      \pgfmathtruncatemacro{\m}{mod(2*\j+\k+6,6)}
      \draw (p\j) -- (v\m);
    }
  \draw (p0)--(p1)--(p2)--cycle;

  \draw[thick] (v0)--(v1)--(v2)--(v3)--(v4)--(v5)--cycle;

  \foreach \i in {0,...,5} \fill (v\i) circle (1.5pt);
  \foreach \j in {0,1,2}   \fill (p\j) circle (1.5pt);
\end{tikzpicture}
\caption{A regular triangulation of a hexagon. Taking $\psi=1$ on the boundary of the
hexagon and $\psi=2$ on the inner triangle, its domains of linearity are
exactly these ten triangles.}
\label{fig:triangulated_hexagon}
\end{figure}
\end{example}

Borrowing notation from \cite{PfePilSan12} for $v\in\mathcal{V}$ and a vertex $r$ of $\mathsf{R}$ we write $\rho(r,v) \coloneqq(\psi(v)r,v)$ for the corresponding vertex of $\mathsf{T}$.

\begin{lemma}\label{lem:graph}
Let the fibre $\mathsf{R}$ be a triangle and the base $\mathsf{S}$ a polygon.
Two vertices $\rho(r, v)$ and $\rho(r', w)$ of $\mathsf{T}$ span an edge if and only if either $v=w$ and $r\neq r'$, or $r=r'$ and $vw$ is an edge of $\mathcal{P}$.
In other words, the graph of $\mathsf{T}$ is the Cartesian product of the graph of $\mathcal{P}$ with $K_3$.
\end{lemma}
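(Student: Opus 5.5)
The plan is to read the edges off the facet description in Theorem~\ref{thm:total_space}. Two vertices of a polytope span an edge if and only if they span an edge of some facet containing both. So it suffices to list the edges of each facet of $\mathsf T$ in terms of the labels $\rho(r,v)$.

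Recall that $\mathsf R$ is a triangle, so all three of its vertices are pairwise adjacent. Since $\mathsf R\times\mathsf S\subseteq\mathsf T\subseteq(\max\psi)\mathsf R\times\mathsf S$, the fibration map sends vertices to vertices of $\mathcal P$, consistent with \eqref{eq:total_vertices}.

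\textbf{Facets of type (i).} These are $\mathsf R\times\mathsf F$ with $\mathsf F=[v,w]$ an edge of $\mathsf S$. Here $\psi(v)=\psi(w)=1$, so the vertices are exactly $\rho(r,v)=(r,v)$ and $\rho(r,w)=(r,w)$. The edges of this honest product are:
\begin{itemize}
\item pairs $\rho(r,v),\rho(r',v)$ with $r\neq r'$ (a copy of $\mathsf R$ over a point);
\item pairs $\rho(r,v),\rho(r,w)$ (a copy of $\mathsf F$).
\end{itemize}
Both kinds are of the claimed form, since every boundary edge of $\mathsf S$ is an edge of $\mathcal P$.

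\textbf{Facets of type (ii).} These are the lifts $L=\{(x,y):y\in\mathsf C,\ x\in\psi(y)\mathsf G\}$, where $\mathsf G=[r,r']$ is an edge of $\mathsf R$ and $\mathsf C=[v,v',v'']$ is a triangle of $\mathcal P$. On $\mathsf C$ the function $\psi$ equals an affine $\ell>0$. The map $(x,y)\mapsto(x/\ell(y),y)$ is a projective transformation, admissible because $\ell$ is nonvanishing on $\mathsf C$. It carries $L$ onto the product $\mathsf G\times\mathsf C$ and carries $\rho(s,u)$ to $(s,u)$ for $s\in\{r,r'\}$ and $u$ a vertex of $\mathsf C$. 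Projective maps preserve the face lattice together with the vertex correspondence. Hence the edges of $L$ are:
\begin{itemize}
\item the pairs $\rho(r,u),\rho(r',u)$;
\item the pairs $\rho(s,u),\rho(s,u')$ with $uu'$ an edge of $\mathsf C$.
\end{itemize}
Again both kinds are of the claimed form. This proves the ``only if'' direction, because every edge of $\mathsf T$ lies in some facet.

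\textbf{The ``if'' direction.} We exhibit a facet containing each claimed pair as an edge.
\begin{itemize}
\item For $v=w$ and $r\neq r'$: take any triangle $\mathsf C$ of $\mathcal P$ containing $v$ and $\mathsf G=[r,r']$. Then $\rho(r,v)\rho(r',v)$ is an edge of the type-(ii) facet for $(\mathsf G,\mathsf C)$.
\item For $r=r'$ and $vw$ an edge of $\mathcal P$: pick a triangle $\mathsf C\ni v,w$ and an edge $\mathsf G$ of $\mathsf R$ containing $r$. Then $\rho(r,v)\rho(r,w)$ is an edge of the corresponding type-(ii) facet.
\end{itemize}
Edges of faces are edges of $\mathsf T$, so the pair is an edge of $\mathsf T$ in either case. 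The final sentence of the lemma is then a restatement: adjacency in the graph of $\mathsf T$ means changing exactly one coordinate along an edge of $K_3$ or an edge of $\mathcal P$, which is the Cartesian product of graphs.

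The main obstacle is justifying that the type-(ii) facet is combinatorially $\mathsf G\times\mathsf C$ with the labelled vertex correspondence. Theorem~\ref{thm:total_space} asserts only the isomorphism, not the matching of labels. The projective rescaling $x\mapsto x/\ell(y)$ above is what I would use to settle it: I would check that it is well defined on a neighbourhood of $L$ and that it restricts to a bijection $L\to\mathsf G\times\mathsf C$. The remaining steps are bookkeeping.
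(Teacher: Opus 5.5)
Your strategy is the paper's. Its entire proof is the remark that the edges of $\mathsf T$ are the union of the edges of its facets, read off from Theorem~\ref{thm:total_space}. You rightly note that this tacitly requires the isomorphism between a type~(ii) facet $L$ and $\mathsf G\times\mathsf C$ to send $\rho(s,u)$ to $(s,u)$.

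The tool you propose for that step, however, is not what you claim. The map $(x,y)\mapsto(x/\ell(y),y)$ is not a projective transformation unless $\ell$ is constant on $\mathsf C$, because only the $x$-component is divided by $\ell(y)$. In homogeneous coordinates it is $[x:y:1]\mapsto[x:y\,\ell(y):\ell(y)]$, which is quadratic in $y$. Nor does it send segments of $L$ to segments. Choose coordinates on $\mathbb R^2$ with $\aff\mathsf G=\{x_1=1\}$, so that $x_1=\ell(y)$ on $L$. Take the segment $t\mapsto\bigl((\ell(y_0+tb),k),\,y_0+tb\bigr)$ through a relative interior point of $L$, with $k\neq0$ and $\ell$ nonconstant in direction $b$. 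Its image is a curve whose second coordinate $k/\ell(y_0+tb)$ is not affine in $t$. So ``projective maps preserve the face lattice'' cannot be invoked. Checking only that your map is a bijection $L\to\mathsf G\times\mathsf C$, as you planned, would not be enough.

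The repair is to divide both components by $\ell(y)$.
\begin{itemize}
\item The map $\Pi(x,y):=(x,y)/\ell(y)$ is a genuine projective transformation: its homogeneous matrix has determinant $\ell(0)\ge1$, since $0\in\mathsf S$ and $\ell\ge\psi\ge1$ on $\mathsf S$.
\item It is admissible on $L$, because $\ell=\psi\ge1$ there.
\item It satisfies $\Pi(L)=\mathsf G\times\mathsf C'$, where $\mathsf C'=\{y/\ell(y):y\in\mathsf C\}$ is a polygon combinatorially equal to $\mathsf C$.
\item It sends $\rho(s,u)$ to $(s,u/\ell(u))$, which is exactly the labelled correspondence you need.
\end{itemize}
Alternatively, keep your bijection and argue through inequalities. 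Inside the hyperplane $\alpha(x)=\ell(y)$, the facet $L$ is cut out by $\alpha'(x)\le\ell(y)$ for the two other facet functionals $\alpha'$ of $\mathsf R$, together with the edge inequalities of $\mathsf C$. Since $\ell(y)>0$, your map preserves at every point which of these inequalities are tight, so it induces an inclusion-preserving bijection of faces. With either fix the rest of your proof is correct.

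One minor point: the lemma does not assume that $\mathcal P$ is a triangulation, so $\mathsf C$ should be an arbitrary cell. Your argument goes through verbatim, with the edges of $\mathsf G\times\mathsf C$ being $\mathsf G\times\{u\}$ for vertices $u$ of $\mathsf C$ and $\{s\}\times e$ for edges $e$ of $\mathsf C$.
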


\begin{proof}
The set of edges is the union of all edges of all facets,
so the statement follows from the description of the facets in Theorem \ref{thm:total_space}.
\end{proof}

\begin{lemma}\label{lem:prism_subgraphs}
Let $W$ be a set of six vertices of $\mathsf{T}$ inducing a subgraph isomorphic to the graph of a triangular prism.
Then $W$ is of one of the following forms:
\begin{enumerate}[{\rm (i)}]
	\item $\{\rho(r, v), \rho(r, w) : r \text{ a vertex of } \mathsf{R}\}$ for an edge $vw$ of $\mathcal{P}$;
	\item $\{\rho(r, u), \rho(r, v), \rho(r, w), \rho(r', u), \rho(r', v), \rho(r', w)\}$ for a $3$-cycle $uvw$ in the graph of $\mathcal{P}$ and two distinct vertices $r, r'$ of $\mathsf{R}$.
	\item $\{\rho(r, u) : u\in U\}$ for a fixed vertex $r$ of $\mathsf{R}$ and a set $U$ of six vertices of $\mathcal{P}$ which induce the graph of a triangular prism;
\end{enumerate}
\end{lemma}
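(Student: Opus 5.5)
We work entirely in the graph $G(\mathsf T)=G(\mathcal P)\,\square\,K_3$ given by Lemma~\ref{lem:graph}. The plan is a case analysis on how the six vertices of $W$ are distributed over the three ``sheets'' $\{\rho(r,\cdot)\}$, one for each vertex $r$ of $\mathsf R$. Write $\pi(W)\subseteq\mathcal V$ for the set of base vertices occurring in $W$, and for each $r$ put $W_r:=\{u:\rho(r,u)\in W\}$.

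Two structural facts about edges in a Cartesian product drive the argument. Every edge of $G(\mathsf T)$ is either vertical, meaning it keeps $v$ fixed and changes $r$, or horizontal, meaning it keeps $r$ fixed and follows an edge of $\mathcal P$. Moreover, every triangle of $G(\mathsf T)$ is either a vertical triangle $\{\rho(r,v):r\}$ or a horizontal triangle coming from a $3$-cycle of $G(\mathcal P)$ in a single sheet. A mixed triangle would need two edges of one type and one of the other, which is impossible because changing $r$ twice and then $v$ once cannot close up.

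The graph of a triangular prism has exactly two triangles, $\{a_1,a_2,a_3\}$ and $\{c_1,c_2,c_3\}$, joined by the perfect matching $a_ic_i$. Each $4$-cycle $a_ia_jc_jc_i$ in it is induced. In a Cartesian product, an induced $4$-cycle consists either of edges of a single type or of two parallel pairs forming a ``square'' $\{x,y\}\times\{r,r'\}$. We split into cases according to the types of the two triangles of $W$.

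In the first case both triangles are vertical, over base vertices $v$ and $w$. The matching edges then cannot be vertical, so they are horizontal, and all three are copies of the single edge $vw$ of $\mathcal P$ in the three sheets. This gives form (i).

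In the second case one triangle is vertical over $v$ and the other is horizontal in sheet $r$. The vertical triangle meets sheet $r$ only in $\rho(r,v)$. For the other two vertices $\rho(r',v)$ and $\rho(r'',v)$, their matching partners lie in sheet $r$, so these matching edges must be vertical and land on $\rho(r,v)$. Both would then be matched to the same vertex, which is a contradiction.

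In the third case both triangles are horizontal, in sheets $r$ and $r'$. If $r\neq r'$, every matching edge is vertical, so the two triangles project to the same $3$-cycle $uvw$ of $G(\mathcal P)$. This gives form (ii). If $r=r'$, then all six vertices lie in one sheet. Since each sheet is an induced copy of $G(\mathcal P)$, the set $U=W_r$ induces a prism graph, which gives form (iii).

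The main obstacle I expect is making the classification of triangles watertight and checking that ``induced'' is genuinely used. In form (ii), for instance, the third vertex of the vertical triangles over $u,v,w$ must be absent for $W$ to be correct, and it is absent simply because $|W|=6$. I would also check that in form (i) the subgraph is automatically induced, since distinct edges of $\mathcal P$ give no extra adjacencies. Finally, I would confirm that no matching edge in the mixed case could be horizontal, which holds because the two triangles share the sheet only at one point.
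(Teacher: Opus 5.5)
Your proposal is correct and follows the paper's proof closely. You classify the triangles of the graph of $\mathsf T$ as either a full fibre over a vertex of $\mathcal P$ or a $3$-cycle of $\mathcal P$ inside a single sheet. You then split on the types of the prism's two disjoint triangles to get form (i), an impossible mixed case, and forms (ii) and (iii). Two small points. Your remark on induced $4$-cycles is true but not used. In the triangle classification you only wrote out the case of two vertical edges and one horizontal edge; the other mixed case (two horizontal edges and one vertical edge) is excluded by the same argument, since the vertex of $\mathsf R$ would change exactly once.
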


\begin{proof}
A triangle in the graph of $\mathsf{T}$ either lies in a single fibre, and is then $\{\rho(r, v) : r \text{ a vertex of } \mathsf{R}\}$ for some $v$, or has the form $\{\rho(r, u),\rho(r, v),\rho(r, w)\}$ for a fixed $r$ and a $3$-cycle $uvw$.
Indeed, two edges of a triangle that change the vertex of $\mathsf{R}$ must both stay in one fibre, and then so does the third.
The graph of a prism consists of two disjoint triangles joined by a perfect matching.

\begin{itemize}
  \item If both triangles are fibres over $v$ and $w$, the matching edges leave the fibre, so $vw$ is an edge of $\mathcal{P}$; this is case (i).
  \item If a fibre over $v$ is matched with a triangle $\{\rho(r, u),\rho(r,u'),\rho(r,u'')\}$ not containing $\rho(r, v)$, then a vertex $\rho(r', v)$ with $r'\neq r$ would need a neighbour $\rho(r, x)$ with $x\neq v$, which is impossible.
	\item If both triangles are $\{\rho(r, u),\rho(r, v),\rho(r, w)\}$ and $\{\rho(r', u'),\rho(r', v'),\rho(r', w')\}$, then we consider two cases:
		\begin{itemize}
			\item $ \mathbf{r \neq r '} $ and the matching condition forces $\{u',v',w'\}=\{u,v,w\}$ which is case (ii). 
			\item $ \mathbf{r = r '} $ and we must have case (iii). \qedhere
		\end{itemize}
\end{itemize}
\end{proof}

The third case in Lemma \ref{lem:prism_subgraphs} can happen as Figure \ref{fig:induced_prism} shows.
\begin{figure}[ht]
\centering
\begin{tikzpicture}[scale=0.9, line join=round]
  \foreach \i/\ang in {0/90, 1/210, 2/330} {
    \coordinate (a\i) at (\ang:3);
    \coordinate (b\i) at (\ang:1);
  }
  \foreach \i/\ang in {0/150, 1/270, 2/30}
    \coordinate (c\i) at (\ang:1.1);

  \fill[blue!8] (a0)--(a1)--(a2)--cycle;

  \begin{scope}[gray!70]
    \foreach \i in {0,1,2} {
      \pgfmathtruncatemacro{\j}{mod(\i+1,3)}
      \draw (c\i) -- (a\i);
      \draw (c\i) -- (a\j);
      \draw (c\i) -- (b\i);
      \draw (c\i) -- (b\j);
    }
  \end{scope}

  \begin{scope}[very thick, red]
    \draw (a0)--(a1)--(a2)--cycle;
    \draw (b0)--(b1)--(b2)--cycle;
    \foreach \i in {0,1,2}
      \draw (a\i)--(b\i);
  \end{scope}

  \foreach \i in {0,1,2} {
    \fill (a\i) circle (2pt);
    \fill (b\i) circle (2pt);
    \filldraw[fill=white, draw=black, thick] (c\i) circle (2.2pt);
  }
\end{tikzpicture}
\caption{A regular triangulation of a triangle whose graph contains an induced prism, drawn in red. }
\label{fig:induced_prism}
\end{figure}

\begin{theorem}\label{thm:prime}
In the setting of Corollary~\ref{cor:prismatical}, the prismatical polytope $\mathsf{T}$ is not a connected sum of two prismatical polytopes.
\end{theorem}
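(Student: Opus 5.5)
We argue by contradiction. Suppose $\mathsf T=\mathsf P\joincs_{\mathsf F}\mathsf P'$ with $\mathsf P,\mathsf P'$ prismatical. In a connected sum the glued facet $\mathsf F=\mathsf F'$ becomes a \emph{missing facet}. Its vertex set $W$ is a set of six vertices of $\mathsf T$ lying in a hyperplane $H=\aff\mathsf F$. Its boundary faces are all faces of $\mathsf T$ (Definition~\ref{def:connected-sum}), so the subgraph of $G(\mathsf T)$ induced on $W$ contains the graph of a triangular prism. Moreover $\mathsf F$ is not a face of $\mathsf T$, and $H$ separates the vertices of $\mathsf T$ outside $W$ into two nonempty sets, the vertices of $\mathsf P$ and of $\mathsf P'$ not in $\mathsf F$.

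The first step is to show that $W$ \emph{induces} exactly the graph of a triangular prism. Any extra edge among vertices of $W$ would be an edge of $\mathsf T$ lying in $\aff\mathsf F$. Since $\mathsf P\cap H=\mathsf F$, such an edge would be a chord of $\mathsf F$ that is an edge of $\mathsf T$. That is impossible, because every edge of $\mathsf T$ lies in $\mathsf P$ or $\mathsf P'$, and there it is either an edge of $\mathsf F$ or meets $H$ in at most one point. So Lemma~\ref{lem:prism_subgraphs} applies, and $W$ is of type (i), (ii) or (iii).

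Next we rule out each type by showing that $W$ either spans a facet of $\mathsf T$ or fails to separate the graph. Type (i) is the vertex set of the fibre over an edge $vw$ of $\mathcal P$. If $vw$ is a boundary edge of $\mathsf S$, then $W$ spans the facet $\mathsf R\times[v,w]$ of Theorem~\ref{thm:total_space}(i), so it is not a missing facet. If $vw$ is interior, then the two triangular 2-faces of the prism on $W$, namely the fibres over $v$ and over $w$, would be triangles of $\mathsf T$ whose vertex figures in $\mathsf T$ are violated; more concretely, we check in the explicit coordinates that $H$ cannot separate. Type (ii) is the lift of a 3-cycle $uvw$ along the edge $[r,r']$ of $\mathsf R$. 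If $uvw$ bounds a cell of $\mathcal P$, this is a facet of type (ii) of Theorem~\ref{thm:total_space}. If $uvw$ is a non-facial 3-cycle, then the quadrilaterals $[\rho(r,u),\rho(r,v),\rho(r',v),\rho(r',u)]$ must be 2-faces of $\mathsf T$, which forces $uv$, $vw$, $wu$ to be edges of $\mathcal P$ (already given). We then need the triangle $\rho(r,u)\rho(r,v)\rho(r,w)$ to be a 2-face of $\mathsf T$. From Theorem~\ref{thm:total_space}, the 2-faces of $\mathsf T$ with all vertices at a fixed $r$ are exactly the lifts $\rho(r,\mathsf C)$ of cells $\mathsf C$, so $uvw$ would be a cell, a contradiction. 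The same 2-face argument kills type (iii). Its triangles are triangles in a single level $r$ and must be 2-faces, so both are cells of $\mathcal P$. Its quadrilaterals at level $r$ would also have to be 2-faces, but every 2-face of $\mathsf T$ within one level $r$ is a triangle (a lifted cell) or a lifted edge, never a quadrilateral.

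The main obstacle is type (i) with $vw$ an interior edge, where all 2-faces of the prism on $W$ are genuine faces of $\mathsf T$: the fibres over $v,w$ and the products of $vw$ with edges of $\mathsf R$ (the latter via the lifts of the cells adjacent to $vw$). Here I would use separation. The vertices of $\mathsf T$ off $W$ must split into two nonempty sides of $H$, each closed under adjacency in $G(\mathsf T)-W$. But $G(\mathsf T)-W$ is $K_3\times(G(\mathcal P)-\{v,w\})$, and removing two adjacent vertices from the graph of a triangulated polygon leaves it connected. This holds unless $\{v,w\}$ is a 2-cut, which for a triangulation of a disc happens only when $vw$ is a chord between boundary vertices. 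In that chordal case I would argue instead that $\mathsf P$ is the fibration restricted to one side of the chord, and that $\psi$ is not affine across $vw$. Because $\psi\equiv1$ on the boundary and $\psi$ is concave, a chord joining boundary vertices would force $\psi\equiv1$ along it, and hence $\psi\equiv 1$ on the whole region cut off. That contradicts the requirement that cells of linearity are triangles separated by genuine bends, as in the irredundant description in the proof of Theorem~\ref{thm:total_space}. Alternatively, $\aff W$ would contain the fibre over the chord and would have to support both sides, which fails because $\psi$ has a strict bend along $vw$. I expect checking this chordal subcase carefully to be the delicate point.
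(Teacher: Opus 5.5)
Your proof is correct, but it takes a different route from the paper's.

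\textbf{The paper's route.} The paper works directly with the hyperplane $H=\aff W$ in each case of Lemma~\ref{lem:prism_subgraphs}:
\begin{enumerate}[(i)]
\item $H=\mathbb R^2\times L$, and an interior endpoint forces $L\cap\mathsf S\supsetneq[v,w]$, so $H\cap\mathsf T$ is strictly larger than the prism.
\item It finds a vertex $z$ of $\mathcal P$ inside the non-facial triangle $uvw$ and shows that $H$ cuts the fibre edge $\rho(r,z)\rho(q,z)$.
\item $H=\mathbb R r\times\mathbb R^2$ cuts a fibre edge.
\end{enumerate}

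\textbf{Your route.} You use that every proper face of the missing facet is a face of $\mathsf T$. You then read off from Theorem~\ref{thm:total_space} that the $2$-faces of $\mathsf T$ with all vertices in one level $r$ are exactly the triangles $\rho(r,\mathsf C)$ with $\mathsf C\in\mathcal C$. This disposes of (ii) and (iii) combinatorially, with no coordinates and no auxiliary vertex $z$, which is cleaner than the paper. For (i) you combine graph separation with the same concavity observation the paper uses for chords. That argument is valid, but it rests on an unproved assertion: the only separating pairs of adjacent vertices in a triangulated disc are chords between boundary vertices. This is a standard fact about near-triangulations, but it is the one ingredient you do not justify. For the chord case, the precise reason is that $\psi\ge 1$ attains the value $1$ at an interior point of $\mathsf S$, so the concave $\psi$ is constant on all of $\mathsf S$. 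Hence $\mathcal P$ has no interior edges at all.

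\textbf{A false claim, and a shorter proof.} You assert that in case (i) with a non-boundary edge, all $2$-faces of the prism on $W$ are genuine faces of $\mathsf T$. This is false whenever an endpoint, say $v$, lies in the interior of $\mathsf S$. The fibre $\psi(v)\mathsf R\times\{v\}$ contains $(0,v)$, which is an interior point of $\mathsf R\times\mathsf S\subseteq\mathsf T$. Equivalently, no facet listed in Theorem~\ref{thm:total_space} contains an entire fibre over an interior vertex: type (i) facets lie over boundary edges, and type (ii) facets meet each fibre only in an edge.

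You use the claim only as motivation, so the proof survives. But correcting it shortens the argument. Your own $2$-face argument then kills case (i) whenever an endpoint is interior, leaving only the chord case, which concavity excludes. The separator fact then becomes unnecessary and the whole proof is uniformly combinatorial.

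Two minor points. A ``lifted edge'' is a $1$-face, not a $2$-face. The remark about ``violated vertex figures'' in your type (i) paragraph should be replaced by the observation above.
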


\begin{proof}
Suppose $\mathsf{T}$ is a connected sum along a triangular prism $\mathsf{Q}$.
Then there is a hyperplane $H$ such that $H\cap\mathsf{T}=\mathsf{Q}$, the vertices of $\mathsf{Q}$ are vertices of $\mathsf{T}$, and both open sides of $H$ contain vertices of $\mathsf{T}$.
In particular $H$ is not a supporting hyperplane, and it does not meet any edge of $\mathsf{T}$ in a single relative-interior point, since such a point would be a vertex of $\mathsf{Q}$.
Moreover, every edge of $\mathsf{T}$ joining two vertices of $\mathsf{Q}$ lies in $H\cap\mathsf{T}=\mathsf{Q}$, so the vertices of $\mathsf{Q}$ induce a prism graph and Lemma~\ref{lem:prism_subgraphs} applies.
In each case the six vertices span a $3$-dimensional affine subspace, which must be $H$.
We analise how the hyperplane $H$ intersects the polytope in each of the cases of Lemma~\ref{lem:prism_subgraphs}.

\begin{description}
    \item[Case (i)]
We have $H=\mathbb{R}^2\times L$, where $L$ is the line through the vertices $v$ and $w$ of $\mathcal{P}$.
If $vw$ is an edge of $\mathsf{S}$, then $H$ supports the facet $\mathsf{R}\times[v,w]$.
Otherwise one endpoint, say $v$, is in the interior of $\mathsf{S}$: if both were on the boundary, $\psi$ would equal one at an interior point of $\mathsf{S}$, forcing the concave function $\psi$ to be constant.
Then $L\cap\mathsf{S}$ strictly contains $[v,w]$, and $H\cap\mathsf{T}$, the preimage of $L\cap\mathsf{S}$, is strictly larger than $\mathsf{Q}$.
    \item[Case (ii)]
Let $\alpha$ be the linear functional equal to one on $\mathsf{G}=[r,r']$, and $\ell$ the affine function agreeing with $\psi$ at $u,v,w$; then $H=\{\alpha(x)=\ell(y)\}$.
If $uvw$ is a cell, $H$ supports a facet.
Otherwise, since $\mathcal{P}$ is a triangulation, some vertex $z\in\mathcal{V}$ lies in the interior of the triangle $uvw$.
By concavity, $\psi(z)>\ell(z)$: equality $\psi(z)=\ell(z)$ would force $\psi=\ell$ throughout $uvw$, making $uvw$ a cell of $\mathcal P$.
Let $q$ be the third vertex of $\mathsf{R}$; since the origin is in the interior of $\mathsf{R}$, we have $\alpha(q)<0<\ell(z)$.
Then $H$ strictly separates $\rho(r, z)$, where $\alpha(\psi(z)r)=\psi(z)>\ell(z)$, from the vertex $\rho(q,z)$, where $\alpha(\psi(z)q)<\ell(z)$.
So $H$ meets the edge $\rho(r, z)\rho(q,z)$ in a single relative-interior point.
    \item[Case (iii)]
We have $H=\mathbb{R}r\times\mathbb{R}^2$.
Since the origin is in the interior of $\mathsf{R}$, the line $\mathbb{R}r$ meets the edge of $\mathsf{R}$ opposite to $r$ in its relative interior.
So $H$ meets the corresponding edge of the fibre over $u\in U$ in a single relative-interior point.
\end{description}

 Each case contradicts the properties of $H$, which proves the theorem.
\end{proof}
\begin{corollary}\label{cor:prime-spectrum}
There is a connected-sum-prime prismatical $4$-polytope with $n$ facets if and only if $n$ is even and $n\notin\{2,4,8,10,14\}$.
\end{corollary}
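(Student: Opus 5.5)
The argument splits into necessity (which values of $n$ are impossible) and sufficiency (a construction for every remaining value). Necessity follows directly from the earlier results. Every prismatical $4$-polytope has an even number of facets by Lemma~\ref{lem:dim4-basic}. A $4$-polytope has at least five facets, so $n\ge 6$, which rules out $n=2,4$. Proposition~\ref{prop:no-eight} excludes $n=8$ outright. Theorem~\ref{thm:no-prime-ten} shows that every example with ten facets is a connected sum. For $n=14$ I would invoke Theorem~\ref{thm:product-facet-1-characterization}(vi), whose proof is the case analysis in the appendix. Conversely, $n=6$ is realised by $\mathsf B=\Delta^2\times\Delta^2$, which is connected-sum-prime, as noted in the introduction.

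For sufficiency with $n\ge 12$ I would use the fibration of Corollary~\ref{cor:prismatical}, whose total space is prime by Theorem~\ref{thm:prime}. First I count facets via Theorem~\ref{thm:total_space}. Let $\mathsf S$ be a $k$-gon with a triangulation $\mathcal P$ having $i$ interior vertices. Then $\mathcal P$ has $k-2+2i$ triangles, by Euler's formula or by an angle count. There is one facet $\mathsf R\times\mathsf F$ for each edge $\mathsf F$ of $\mathsf S$, and three facets $\mathsf G\times\mathsf C$ for each triangle $\mathsf C$. Hence
\[
 f_3(\mathsf T)=k+3(k-2+2i)=4k+6i-6 .
\]

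Next I check that every admissible $n$ has this form. Set $m=(n+6)/2$, so the requirement becomes $m=2k+3i$ with $k\ge3$ and $i\ge1$.
\begin{itemize}
\item If $m$ is odd, take $i=1$ and $k=(m-3)/2$. This gives $k\ge3$ exactly when $m\ge 9$, that is, $n\ge12$.
\item If $m$ is even, take $i=2$ and $k=(m-6)/2$. This gives $k\ge3$ exactly when $m\ge12$, that is, $n\ge18$.
\end{itemize}
The only even value $n\ge12$ left uncovered is $n=14$ ($m=10$), and this is precisely the value excluded by necessity. Indeed, $4k+6i=20$ has no solution with $k\ge3$ and $i\ge1$, consistent with the appendix result.

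The step I expect to be the real work is realising the required triangulations as the linearity domains of an admissible $\psi$. We need a concave piecewise linear $\psi$ that equals $1$ on $\partial\mathsf S$, uses each of the $i$ interior points as a vertex, and has only triangles as maximal regions of linearity. My plan is as follows.
\begin{itemize}
\item Place $\mathsf S$ as a regular $k$-gon centred at the origin, and put the $i\in\{1,2\}$ interior points near the origin.
\item Set the heights to $1$ at the boundary vertices and to roughly $2$ at the interior points, with a small generic perturbation. Let $\psi$ be the upper concave envelope of these lifted points.
\end{itemize}
Because the interior points are lifted well above the boundary level, each lies strictly above the convex hull of the others, so it is a vertex of the upper hull and a vertex of $\mathcal P$. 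Genericity, meaning no four lifted points are coplanar, except the boundary points, which are coplanar at height $1$ but lie in the lower facet and hence are irrelevant, ensures that every upper face is a triangle. Since $\psi\equiv1$ on $\partial\mathsf S$ and every boundary edge is an edge of $\mathcal P$, the hypotheses of Corollary~\ref{cor:prismatical} hold. Theorem~\ref{thm:prime} then gives a connected-sum-prime prismatical $4$-polytope with $n$ facets. For $i=1$ this is simply the cone triangulation over the interior point, which avoids any genericity issue.
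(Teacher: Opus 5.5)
Your proposal is correct and follows the paper's proof. The necessity direction uses the same citations. The sufficiency direction uses the same two fibration families: the cone over one interior point, giving $4k$ facets, and a triangulation with two interior points, giving $4k+6$ facets; together with $\mathsf B$ these cover every admissible $n$.

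The only difference is in how the two-interior-point triangulation is realised. You use a generic regular lifting. The real reason each interior point remains a vertex there is that their heights are nearly equal, not merely that they sit well above the boundary. The paper instead stellarly subdivides one cone triangle at its barycentre $z$, with $\psi_\varepsilon(z)=4/3+\varepsilon$.
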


\begin{proof}
By Lemma~\ref{lem:dim4-basic} the number of facets is even and at least six.
There is no example with eight facets by Proposition~\ref{prop:no-eight}, and none with ten or fourteen facets by Theorems~\ref{thm:no-prime-ten} and~\ref{thm:no-prime-fourteen}, while $\Delta^2\times\Delta^2$ is a connected-sum-prime example with six facets.
It remains to construct examples with $4k$ facets for $k\geq 3$ and with $4k+6$ facets for $k\geq 3$, since together with six these cover all even numbers outside $\{2,4,8,10,14\}$.

Let $\mathsf{S}$ be a $k$-gon with $k\geq3$ and let $\mathsf{R}$ be a triangle containing the origin in its interior.
Let $\psi$ be the piecewise linear function that equals one on the boundary of $\mathsf{S}$ and equals two at the barycentre.
Its regions of linearity are these $k$ triangles, so by Corollary~\ref{cor:prismatical} and Theorem~\ref{thm:total_space} the total space $\mathsf{P}_k$ is a prismatical $4$-polytope with $k+3k=4k$ facets, and it is connected-sum-prime by Theorem~\ref{thm:prime}.

For the second family, centrally subdivide one triangle at its barycentre $z$. 
Modify $\psi$ by setting $\psi_\varepsilon(z)=4/3+\varepsilon$ and retaining all old vertex values. 
For sufficiently small $\varepsilon>0$, the total space $\mathsf Q_k$ resulting  from this function $\psi_\varepsilon$  is a prismatical $4$-polytope with $k+3(k+2)=4k+6$ facets and is connected-sum-prime by Theorem~\ref{thm:prime}.
\end{proof}

\begin{figure}[ht]
\centering
\begin{tikzpicture}[scale=1.1, line join=round]
  \foreach \i in {0,...,5}
    \coordinate (v\i) at (60*\i:2);
  \coordinate (c) at (0,0);

  \fill[blue!10] (v0)--(v1)--(v2)--(v3)--(v4)--(v5)--cycle;
  \foreach \i in {0,...,5} \draw (c)--(v\i);
  \draw[thick] (v0)--(v1)--(v2)--(v3)--(v4)--(v5)--cycle;

  \foreach \i in {0,...,5} \fill (v\i) circle (1.5pt);
  \fill (c) circle (1.5pt) node[below=4pt, fill=blue!10, inner sep=1pt] {};
\end{tikzpicture}
\hspace{1.5cm}
\begin{tikzpicture}[scale=1.1, line join=round]
  \foreach \i in {0,...,5}
    \coordinate (v\i) at (60*\i:2);
  \coordinate (c) at (0,0);
  \coordinate (p) at (30:1.1547);

  \fill[blue!10] (v0)--(v1)--(v2)--(v3)--(v4)--(v5)--cycle;
  \fill[blue!25] (c)--(v0)--(v1)--cycle;
  \foreach \i in {0,...,5} \draw (c)--(v\i);
  \foreach \x in {c,v0,v1} \draw (p)--(\x);
  \draw[thick] (v0)--(v1)--(v2)--(v3)--(v4)--(v5)--cycle;

  \foreach \i in {0,...,5} \fill (v\i) circle (1.5pt);
  \fill (c) circle (1.5pt) node[below=4pt, fill=blue!10, inner sep=1pt] {};
  \fill (p) circle (1.5pt) node[above left=-1pt] {};
\end{tikzpicture}
\caption{The subdivisions of the hexagon used to construct $P_6$ and $Q_6$ respectively.}
\label{fig:base_subdivisions}
\end{figure}

Every prismatical $4$-polytope constructed so far admits a polyhedral subdivision, without new vertices, in which every cell is combinatorially equivalent to $\mathsf{B}$.
For $\Delta^2\times\Delta^2$ this is the trivial subdivision.
If $\mathsf{P}$ and $\mathsf{P}'$ admit such subdivisions, then so does any connected sum $\mathsf{P}\cs\mathsf{P}'$.
In particular, the dual graph of the subdivision of an iterated connected sum of copies of $\mathsf{B}$ is a tree.
For a total space $\mathsf{T}$ as in Corollary~\ref{cor:prismatical}, the preimage of each triangle of the subdivision of the base in the fibration is isomorphic to $\mathsf B$.
These preimages subdivide $\mathsf{T}$, and the dual graph of this subdivision is the dual graph of the triangulation of the base.
See Figure \ref{fig:dual_graph}

\begin{figure}[ht]
\centering
\begin{tikzpicture}[scale=1.2, line join=round]
  \foreach \i in {0,...,5}
    \coordinate (v\i) at (60*\i:2);
  \foreach \j in {0,1,2}
    \coordinate (p\j) at (120*\j:0.8);

  \begin{scope}[gray!50]
    \foreach \j in {0,1,2}
      \foreach \k in {-1,0,1} {
        \pgfmathtruncatemacro{\m}{mod(2*\j+\k+6,6)}
        \draw (p\j) -- (v\m);
      }
    \draw (p0)--(p1)--(p2)--cycle;
    \draw[thick] (v0)--(v1)--(v2)--(v3)--(v4)--(v5)--cycle;
  \end{scope}

  \coordinate (c) at (0,0);                       
  \foreach \i in {0,1,2} {                        
    \pgfmathtruncatemacro{\a}{2*\i+1}
    \pgfmathtruncatemacro{\b}{mod(\i+1,3)}
    \coordinate (m\i) at ($0.3333*(v\a)+0.3333*(p\i)+0.3333*(p\b)$);
  }
  \foreach \k in {0,...,5} {                      
    \pgfmathtruncatemacro{\l}{mod(\k+1,6)}
    \pgfmathtruncatemacro{\j}{mod(ceil(\k/2),3)}
    \coordinate (b\k) at ($0.3333*(v\k)+0.3333*(v\l)+0.3333*(p\j)$);
  }

  \begin{scope}[thick]
    \foreach \i in {0,1,2} {
      \pgfmathtruncatemacro{\x}{2*\i}
      \pgfmathtruncatemacro{\y}{2*\i+1}
      \pgfmathtruncatemacro{\z}{mod(2*\i+5,6)}
      \draw (c) -- (m\i);
      \draw (m\i) -- (b\x);
      \draw (m\i) -- (b\y);
      \draw (b\z) -- (b\x);
    }
  \end{scope}

  \fill (c) circle (2pt);
  \foreach \i in {0,1,2} \fill (m\i) circle (2pt);
  \foreach \k in {0,...,5} \fill (b\k) circle (2pt);
\end{tikzpicture}
\caption{The dual graph of the triangulated hexagon of
Figure~\ref{fig:triangulated_hexagon}. It is also the dual graph of the
subdivision of the corresponding total space into ten copies of $\mathsf{B}$,
and it contains cycles.}
\label{fig:dual_graph}
\end{figure}

\begin{question}
Does every prismatical 4-polytope admits a polyhedral subdivision with no extra vertices where maximal cells are isomorphic to $ \mathsf{B} $?
\end{question}

The answer is negative for cubical 3-polytopes using the cube instead of $\mathsf B$.

\begin{example}\label{ex:cubical_prime}
The convex hull of the columns of the matrix
\[
\begin{bmatrix}
0 & 1 & 0 & 1 & 0 & 1 & 0 & 1           & \tfrac12 & \tfrac57  \\[2pt]
0 & 0 & 1 & 1 & 0 & 0 & 1 & \tfrac12    & 1        & \tfrac57  \\[2pt]
0 & 0 & 0 & 0 & 1 & 1 & 1 & \tfrac32    & \tfrac32 & \tfrac{12}7
\end{bmatrix}
\]
is a cubical $3$-polytope with ten vertices and eight facets; see Figure~\ref{fig:cubical-no-subdivision}.
It admits no subdivision into combinatorial cubes without new vertices.
It is not itself a cube, so such a subdivision would have two cells sharing a quadrilateral; these cells lie on opposite sides of its affine hull and therefore span $4+4+4=12$ vertices.
\end{example}

Cubical polytopes have been studied from the point of view of constructions and of subdivisions into cubes, see Schwartz~\cite{Sch04} and Schwartz and Ziegler~\cite{SchZie04}.

\begin{figure}[ht]
\centering
\begin{tikzpicture}[line join=round]
  \begin{scope}[x={(-1.05cm,-0.81cm)}, y={(1.21cm,-0.70cm)}, z={(0cm,1.19cm)}, scale=1.5]
    \coordinate (p0) at (0,0,0);
    \coordinate (p1) at (1,0,0);
    \coordinate (p2) at (0,1,0);
    \coordinate (p3) at (1,1,0);
    \coordinate (p4) at (0,0,1);
    \coordinate (p5) at (1,0,1);
    \coordinate (p6) at (0,1,1);
    \coordinate (p7) at (1,1/2,3/2);
    \coordinate (p8) at (1/2,1,3/2);
    \coordinate (p9) at (5/7,5/7,12/7);

    \fill[blue!10] (p1)--(p5)--(p7)--(p3)--cycle;
    \fill[blue!10] (p2)--(p3)--(p8)--(p6)--cycle;
    \fill[blue!10] (p3)--(p8)--(p9)--(p7)--cycle;
    \fill[blue!10] (p4)--(p9)--(p7)--(p5)--cycle;
    \fill[blue!10] (p4)--(p9)--(p8)--(p6)--cycle;

    \draw[dashed] (p0)--(p1) (p0)--(p2) (p0)--(p4);

    \draw[thick] (p1)--(p3)--(p2) (p1)--(p5) (p2)--(p6);
    \draw[thick] (p5)--(p4)--(p6);
    \draw[thick] (p3)--(p7) (p3)--(p8);
    \draw[thick] (p5)--(p7)--(p9)--(p8)--(p6) (p4)--(p9);

    \foreach \i in {0,...,9} \fill (p\i) circle (0.9pt);
  \end{scope}
  \node at (0,-2.8) {(a)};

  \begin{scope}[xshift=6.2cm, scale=0.75]
    \coordinate (s0) at ( 0,   2.8);
    \coordinate (s1) at (-2.8, 0);
    \coordinate (s2) at ( 2.8, 0);
    \coordinate (s3) at ( 0,  -2.8);
    \coordinate (s4) at ( 0,   1.4);
    \coordinate (s5) at (-1.4, 0);
    \coordinate (s6) at ( 1.4, 0);
    \coordinate (s7) at (-0.65,-0.65);
    \coordinate (s8) at ( 0.65,-0.65);
    \coordinate (s9) at ( 0,  -0.35);

    \fill[blue!10] (s0)--(s1)--(s3)--(s2)--cycle;

    \draw (s0)--(s4) (s1)--(s5) (s2)--(s6) (s3)--(s7) (s3)--(s8);
    \draw (s4)--(s5)--(s7)--(s9)--(s4) (s4)--(s6)--(s8)--(s9);

    \draw[thick] (s0)--(s1)--(s3)--(s2)--cycle;

    \foreach \i in {0,...,9} \fill (s\i) circle (1.5pt);
  \end{scope}
  \node at (6.2,-2.8) {(b)};
\end{tikzpicture}
\caption{A cubical $3$-polytope with ten vertices and eight facets.}
\label{fig:cubical-no-subdivision}
\end{figure}

\section{Dimension greater than four}
\label{sec:dim-at-least-five}

This section is devoted to proving Theorem~\ref{thm:product-facet-1-characterization}(viii): the class $\mathscr P_d$ is empty for every $d\ge5$.
Throughout the section we argue by contradiction, so we fix $d\ge 5$ and a prismatical $d$-polytope $\mathsf P$.

Every facet of $\mathsf P$ is a simplicial prism $\Delta^{d-2}\times\Delta^1$, whose facets are of two kinds: the two \emph{simplicial ridges} $\Delta^{d-2}\times\set{0}$ and $\Delta^{d-2}\times\set{1}$, and the $d-1$ \emph{prismatical ridges}
$\Delta^{d-3}\times\Delta^1$, one for each facet of $\Delta^{d-2}$.
Consequently every ridge of $\mathsf P$ is either a $(d-2)$-simplex or a simplicial $(d-2)$-prism, and each facet of $\mathsf P$ contains exactly two simplicial ridges and exactly $d-1$ prismatical ridges.
Since two distinct facets of a polytope meet in at most one ridge, this yields two spanning subgraphs of the dual graph $G^*(\mathsf P)$:
\begin{itemize}
  \item the \emph{prismatical dual graph} $G^*_{\Pi}(\mathsf P)$, whose vertices are the facets of $\mathsf P$ and whose edges join two facets meeting in a prismatical ridge; it is $(d-1)$-regular;
  \item the \emph{simplicial dual graph} $G^*_{\Delta}(\mathsf P)$, whose vertices are the facets of $\mathsf P$ and whose edges join two facets meeting in a simplicial ridge; it is $2$-regular, hence a disjoint union of cycles.
\end{itemize}

We call the $d-1$ edges of a facet $\mathsf F\cong\Delta^{d-2}\times\Delta^1$ that join the two simplicial ridges of $\mathsf F$ the \emph{bridge edges} of $\mathsf F$; they are the edges $\set{p}\times\Delta^1$ with $p$ a vertex of $\Delta^{d-2}$.
The whole argument rests on the following elementary observation, which is where the hypothesis $d\ge5$ enters.

\begin{remark}\label{rem:intrinsic-prism-structure}
Let $k\ge2$.
The prism $\Delta^{k}\times\Delta^1$ has exactly two facets that are simplices,
namely $\Delta^{k}\times\set{0}$ and $\Delta^{k}\times\set{1}$; the remaining $k+1$ facets are prisms $\Delta^{k-1}\times\Delta^1$, which are not simplices because $k-1\ge1$.
Hence for $k\ge2$ the product decomposition of $\Delta^{k}\times\Delta^1$, and therefore its set of bridge edges, is determined by the face lattice.
For $k=1$ this fails: the square $\Delta^1\times\Delta^1$ carries two distinct prism structures.

Since $d\ge5$, the prismatical ridges of $\mathsf P$ are prisms $\Delta^{d-3}\times\Delta^1$ with $d-3\ge2$, so each prismatical ridge $\mathsf R$ has an intrinsically defined set of $d-2$ bridge edges, and these are bridge edges of both facets of $\mathsf P$ containing $\mathsf R$.
This is exactly what fails when $d=4$, where the prismatical ridges are quadrilaterals, as all the examples of Section~\ref{sec:prismatical-dim-4} illustrate.
\end{remark}

\begin{lemma}[Bridge point]\label{lem:bridge-point}
After a projective transformation we may assume that, for every facet $\mathsf F$ of $\mathsf P$, the affine hulls of the $d-1$ bridge edges of $\mathsf F$ meet in a single point $v_{\mathsf F}\in\R^d$, and that \[
  \mathsf S_{\mathsf F}:=\conv\paren*{\mathsf F\cup\set{v_{\mathsf F}}}
\]
is a $(d-1)$-simplex.
Moreover $v_{\mathsf F}\in\aff\mathsf F\setminus\mathsf F$; in particular $v_{\mathsf F}\notin\mathsf P$.
\end{lemma}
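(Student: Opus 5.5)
The plan is to prove the statement facet by facet using only incidence geometry inside $\aff\mathsf F\cong\R^{d-1}$, and then handle all facets at once with a single projective transformation. Fix a facet $\mathsf F\cong\Delta^{d-2}\times\Delta^1$, with simplicial ridges $\mathsf A=\conv\{a_1,\dots,a_{d-1}\}$ and $\mathsf A'=\conv\{a'_1,\dots,a'_{d-1}\}$ and bridge edges $[a_i,a'_i]$. By Remark~\ref{rem:intrinsic-prism-structure}, the bridge edges are determined combinatorially, since $d\ge5$.

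\textbf{Concurrency.} For $i\ne j$ the quadrilateral $[a_i,a_j,a'_j,a'_i]$ is a $2$-face of $\mathsf F$. Hence the lines $\ell_i=\aff[a_i,a'_i]$ and $\ell_j$ are coplanar, so they either meet or are parallel. There are $d-1\ge4$ of these lines, they are pairwise coplanar, and they do not all lie in one plane, because together they span the $(d-1)$-dimensional space $\aff\mathsf F$. A standard lemma says that a family of pairwise coplanar lines not contained in a common plane passes through a common point, possibly at infinity. To prove it, take $\ell_1,\ell_2,\ell_3$ not coplanar; each $\ell_k$ then meets $\ell_1,\ell_2,\ell_3$. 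Arguing in the projective closure, three pairwise-meeting lines that are not coplanar must be concurrent. So every $\ell_i$ passes through one projective point $v_{\mathsf F}$.

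\textbf{Projective normalization.} There are finitely many points $v_{\mathsf F}$, one per facet, some possibly at infinity. The hyperplanes of $\mathbb{RP}^d$ that are disjoint from $\mathsf P$ form a nonempty open set, for instance hyperplanes close to a hyperplane at infinity pushed slightly. Among them we choose one, $H_\infty$, that avoids all the $v_{\mathsf F}$; this is possible because avoiding a finite set is an open dense condition. Sending $H_\infty$ to infinity is a projective transformation admissible for $\mathsf P$. It preserves the combinatorics, collinearity and concurrency, and after it every $v_{\mathsf F}$ is an affine point of $\aff\mathsf F$.

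\textbf{$v_{\mathsf F}\notin\mathsf F$, and $\mathsf S_{\mathsf F}$ is a simplex.} Each bridge edge is a face of $\mathsf F$, so $\ell_i\cap\mathsf F=[a_i,a'_i]$. If $v_{\mathsf F}\in\mathsf F$, then $v_{\mathsf F}$ would lie on every bridge edge, but the bridge edges are pairwise disjoint. Since $\aff\mathsf F\cap\mathsf P=\mathsf F$, it follows that $v_{\mathsf F}\notin\mathsf P$. Next, $v_{\mathsf F}\notin\aff\mathsf A$: otherwise every $\ell_i$ would lie in $\aff\mathsf A$, and $a'_i$ would too, forcing $\mathsf F\subset\aff\mathsf A$. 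The same holds for $\mathsf A'$.

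So each of $\conv(\{v_{\mathsf F}\}\cup\mathsf A)$ and $\conv(\{v_{\mathsf F}\}\cup\mathsf A')$ is a $(d-1)$-simplex. It remains to show that $v_{\mathsf F}$ lies on the same side of both ridges, that is, beyond $\mathsf A$ say. For this, use that $\aff\mathsf A$ supports $\mathsf F$, so all $a'_i$ lie strictly on one side of it. If $v_{\mathsf F}$ were on that same side, each $a_i$ would separate $v_{\mathsf F}$ from $a'_i$ on $\ell_i$, but only for some $i$, giving a sign inconsistency. I would rule this out by comparing the affine parameters $t_i$ with $a_i=v_{\mathsf F}+t_i(a'_i-v_{\mathsf F})$ and using the supporting hyperplanes of both $\mathsf A$ and $\mathsf A'$. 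The conclusion should be $0<t_i<1$ for all $i$ (after possibly swapping $\mathsf A$ and $\mathsf A'$). Then $\mathsf F\subset\conv(\{v_{\mathsf F}\}\cup\mathsf A')$, so $\mathsf S_{\mathsf F}=\conv(\{v_{\mathsf F}\}\cup\mathsf A')$ is a $(d-1)$-simplex.

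\textbf{Main obstacle.} I expect the hardest step to be this last orientation argument: showing that all parameters $t_i$ lie in the same interval, which amounts to showing that $\mathsf F$ is a genuine frustum of the cone from $v_{\mathsf F}$. A secondary point needing care is checking that the projective normalization keeps the relevant points finite simultaneously for all facets.
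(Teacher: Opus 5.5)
Your concurrency argument, the projective normalisation, and the facts $v_{\mathsf F}\notin\mathsf F$, $v_{\mathsf F}\notin\mathsf P$ and $v_{\mathsf F}\notin\aff\mathsf A\cup\aff\mathsf A'$ all match the paper's proof. For concurrency the paper uses the equivalent formulation ``if two bridge lines meet at $p$ and a third avoids $p$, all lines are coplanar''; your non-coplanar-triple version needs one routine check, namely that each $\ell_k$ forms a non-coplanar triple with two of $\ell_1,\ell_2,\ell_3$.

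\textbf{The gap.} It is in the final step, which is what makes $\mathsf S_{\mathsf F}$ a simplex containing $\mathsf F$. You assert that $0<t_i<1$ for all $i$ after a possible swap, but you do not prove it. The heuristic you give is also wrong. If $v_{\mathsf F}$ lies on the same side of $\aff\mathsf A$ as the $a'_i$, nothing inconsistent happens. In that case, for \emph{every} $i$, the point $a'_i$ lies between $v_{\mathsf F}$ and $a_i$. That is simply the configuration with the roles of $\mathsf A$ and $\mathsf A'$ exchanged, not a contradiction. What must be shown is that the sign of $1-t_i$ is independent of $i$, and your proposal leaves exactly this open.

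\textbf{How to close it.} First, $v_{\mathsf F}\notin[a_i,a'_i]$ gives $t_i>0$ and $t_i\neq 1$. Let $h$ be an affine function on $\aff\mathsf F$ that vanishes on $\aff\mathsf A$ and is positive on $\mathsf F\setminus\mathsf A$. Evaluating $h$ at $a_i=(1-t_i)v_{\mathsf F}+t_i a'_i$ gives $(1-t_i)\,h(v_{\mathsf F})=-t_i\,h(a'_i)<0$. Hence $\operatorname{sign}(1-t_i)=-\operatorname{sign}h(v_{\mathsf F})$ for all $i$ at once. A single supporting hyperplane suffices, and no comparison with $\aff\mathsf A'$ is needed.

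The paper argues pairwise instead. In the quadrilateral $2$-face $[a_i,a_j,a'_j,a'_i]$, the edges $[a_i,a_j]$ and $[a'_i,a'_j]$ would cross inside the angle at $v_{\mathsf F}$ if $1-t_i$ and $1-t_j$ had opposite signs. Either argument completes your proof, and then $\mathsf F\subseteq\conv(\{v_{\mathsf F}\}\cup\mathsf A')$ follows as you say.
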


\begin{proof}
Let $\mathsf F$ be a facet of $\mathsf P$, with simplicial ridges $\mathsf T_0,\mathsf T_1$ and bridge edges $\mathsf e_1,\dots,\mathsf e_{d-1}$, where $\mathsf e_i=[a_i,b_i]$ with $a_i\in\Fc_0(\mathsf T_0)$ and $b_i\in\Fc_0(\mathsf T_1)$.
For $i\ne j$ the four vertices $a_i,a_j,b_i,b_j$ span a quadrilateral $2$-face of $\mathsf F$, so the lines $\aff\mathsf e_i$
and $\aff\mathsf e_j$ are distinct and coplanar; viewed in $\mathbb{RP}^d\supset\R^d$ they meet in exactly one point.
The bridge edges are not all coplanar, because their $2(d-1)\ge8$
endpoints affinely span $\aff\mathsf F$, of dimension $d-1\ge4$.
If two of the bridge lines meet at a point
$p$ and a third one avoids $p$, then that third line meets the plane spanned by the first two in two distinct points and hence lies in it; iterating, all the lines would be coplanar.
Therefore $\aff\mathsf e_1,\dots,\aff\mathsf e_{d-1}$ have a single common point $v_{\mathsf F}\in\mathbb{RP}^d$.

The set of projective hyperplanes disjoint from $\mathsf P$ is a nonempty open subset of the dual projective space \footnote{a set of projective hyperplanes is \emph{open} if, for every hyperplane in it, all sufficiently small changes to the coefficients of a chosen defining equation still represent hyperplanes in the set.}.
Additionally, the hyperplanes through a fixed point form a nowhere dense subset\footnote{a subset is \emph{nowhere dense} if every nonempty open set contains a nonempty open subset disjoint from it.}.
As $\mathsf P$ has finitely many facets, we may choose a projective hyperplane disjoint from $\mathsf P$ and from all the points $v_{\mathsf F}$, and apply a projective transformation sending it to the hyperplane at infinity (see \cite[Sec.~2.5]{Pin24}).
After this change of coordinates $\mathsf P$ is again a $d$-polytope in $\R^d$ and every $v_{\mathsf F}$ is an ordinary point of $\R^d$.

Fix $\mathsf F$ and write $v:=v_{\mathsf F}$.
Since $\mathsf e_i$ is a face of $\mathsf F$, we have $\mathsf F\cap\aff\mathsf e_i=\mathsf e_i$; if $v$ belonged to $\mathsf F$ it would therefore lie in $\mathsf e_1\cap\mathsf e_2=\varnothing$.
Hence $v\notin\mathsf F$, and since $v\in\aff\mathsf F$ and $\mathsf P\cap\aff\mathsf F=\mathsf F$, also $v\notin\mathsf P$.

Because $v\notin\mathsf e_i$, the points $a_i$ and $b_i$ lie on the same ray emanating from $v$; write $a_i=v+s_iu_i$ and $b_i=v+t_iu_i$ with $s_i,t_i>0$.
If $v$ lay in $\aff\mathsf T_0$
then every $b_i$ would lie in $\aff\mathsf T_0$ as well, forcing $\mathsf F\subseteq\aff\mathsf T_0$, which is absurd.
Hence $\set{v,a_1,\dots,a_{d-1}}$ is affinely independent and $\mathsf S:=\conv\set{v,a_1,\dots,a_{d-1}}$ is a $(d-1)$-simplex.
For $i\ne j$ the quadrilateral $2$-face $[a_i,a_j,b_j,b_i]$ has $[a_i,a_j]$ and $[b_i,b_j]$ among its edges, so these two segments do not cross; consequently $s_i-t_i$ and $s_j-t_j$ have the same sign.
Interchanging the labels of $\mathsf T_0$ and $\mathsf T_1$ if necessary, we may assume $t_i<s_i$ for all $i$, so that $b_i\in\ri[v,a_i]$ and $\mathsf F\subseteq\mathsf S$.
Therefore $\conv(\mathsf F\cup\set{v})=\mathsf S$ is a $(d-1)$-simplex, and $\mathsf F$ is obtained from $\mathsf S$ by truncating the vertex $v$, with $\mathsf T_0$ the facet of $\mathsf S$ opposite $v$ and $\ri\mathsf T_1\subseteq\ri\mathsf S$.
\end{proof}

From now on we assume that the coordinates have been normalised as in Lemma~\ref{lem:bridge-point}.
For a facet $\mathsf F$ we keep the notation $\mathsf T_0(\mathsf F)$ and $\mathsf T_1(\mathsf F)$ for the simplicial ridges of $\mathsf F$,
labelled so that $\mathsf T_0(\mathsf F)$ is the facet of $\mathsf S_{\mathsf F}$ opposite $v_{\mathsf F}$ and $\ri\mathsf T_1(\mathsf F)\subseteq\ri\mathsf S_{\mathsf F}$.

\begin{lemma}\label{lem:bridge-point-constant}
Let $\mathsf F$ and $\mathsf G$ be facets of $\mathsf P$ lying in the same connected component of $G^*_{\Pi}(\mathsf P)$.
Then $v_{\mathsf F}=v_{\mathsf G}$.
\end{lemma}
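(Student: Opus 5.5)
By connectivity, it suffices to treat two facets $\mathsf F$ and $\mathsf G$ that are adjacent in $G^*_{\Pi}(\mathsf P)$. So we assume $\mathsf F\cap\mathsf G=\mathsf R$ is a prismatical ridge, $\mathsf R\cong\Delta^{d-3}\times\Delta^1$, and we show $v_{\mathsf F}=v_{\mathsf G}$. Once this holds for adjacent facets, it propagates along any path in a connected component of $G^*_{\Pi}(\mathsf P)$.

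The key input is Remark~\ref{rem:intrinsic-prism-structure}. Since $d\ge5$, we have $d-3\ge2$, so the prism structure of $\mathsf R$ is determined by its face lattice. Its $d-2$ bridge edges are therefore intrinsic to $\mathsf R$, and we must check that they are bridge edges of both $\mathsf F$ and $\mathsf G$.

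To see this, write $\mathsf F\cong\Delta^{d-2}\times\Delta^1$. The prismatical ridge $\mathsf R$ of $\mathsf F$ is $\mathsf D\times\Delta^1$ for a facet $\mathsf D$ of $\Delta^{d-2}$, and its bridge edges are $\{p\}\times\Delta^1$ for $p\in\mathsf D$. These are exactly the bridge edges of $\mathsf F$ lying in $\mathsf R$. The simplicial facets of $\mathsf R$ are $\mathsf D\times\{0\}$ and $\mathsf D\times\{1\}$, and since $d-3\ge2$ they are the only simplex facets of $\mathsf R$. Hence the bridge edges of $\mathsf R$, meaning the edges joining its two simplex facets, are determined by $\mathsf R$ alone. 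The same reasoning in $\mathsf G$ identifies the same set. So $\mathsf F$ and $\mathsf G$ share $d-2\ge3$ bridge edges $\mathsf e_1,\dots,\mathsf e_{d-2}$.

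By Lemma~\ref{lem:bridge-point}, $v_{\mathsf F}$ is the common point of the lines $\aff\mathsf e_i$ over all bridge edges of $\mathsf F$, and in particular it lies on each $\aff\mathsf e_i$ with $i\le d-2$. The same holds for $v_{\mathsf G}$. Two distinct bridge lines, say $\aff\mathsf e_1$ and $\aff\mathsf e_2$, are coplanar and distinct, as noted in the proof of Lemma~\ref{lem:bridge-point}, so they meet in exactly one point of $\R^d$. Both $v_{\mathsf F}$ and $v_{\mathsf G}$ lie on both lines, which forces $v_{\mathsf F}=v_{\mathsf G}$.

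The only real subtlety is the identification in the second step, and it is exactly where $d\ge5$ is used. For $d=4$, $\mathsf R$ is a square and its two pairs of opposite edges are indistinguishable, so the argument fails. Once this identification is in place, the remaining steps are immediate.
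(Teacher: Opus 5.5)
Your proposal is correct and follows essentially the same route as the paper. You reduce to facets adjacent in $G^*_{\Pi}(\mathsf P)$, use Remark~\ref{rem:intrinsic-prism-structure} (valid since $d\ge5$) to see that the $d-2$ bridge edges of the shared prismatical ridge are bridge edges of both facets, and conclude because two distinct bridge lines meet in at most one point. Your explicit check that the edges of $\mathsf R$ joining its two simplex facets are exactly the bridge edges of $\mathsf F$ (and of $\mathsf G$) inside $\mathsf R$ just spells out what the paper takes directly from the remark.
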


\begin{proof}
Assume first that $\mathsf F$ and $\mathsf G$ are adjacent in $G^*_{\Pi}(\mathsf P)$, and let $\mathsf R:=\mathsf F\cap\mathsf G$ be the prismatical ridge they share.
By Remark~\ref{rem:intrinsic-prism-structure} the $d-2$ bridge edges of $\mathsf R$ are bridge edges of $\mathsf F$ and also bridge edges of $\mathsf G$.
Since $d-2\ge3$, at least two of them are available, and the common point of the affine hulls of any two distinct bridge edges is unique; hence $v_{\mathsf F}=v_{\mathsf G}$.
The general case follows by induction on the distance from $\mathsf F$ to $\mathsf G$ in $G^*_{\Pi}(\mathsf P)$.
\end{proof}

For a facet $\mathsf F$ of $\mathsf P$ we write $U_{\mathsf F}$ for the set of facets lying in the connected component of $\mathsf F$ in $G^*_{\Pi}(\mathsf P)$, and $v:=v_{\mathsf F}=v_{\mathsf G}$ for every $\mathsf G\in U_{\mathsf F}$.

\begin{lemma}\label{lem:cone-over-component}
Let $\mathsf F$ be a facet of $\mathsf P$, let $v:=v_{\mathsf F}$ and let $\mathsf Q:=\conv(\mathsf P\cup\set{v})$.
Then \begin{enumerate}[{\rm (i)}]
  \item $v$ is a vertex of $\mathsf Q$;
  \item the facets of $\mathsf Q$ containing $v$ are exactly the $(d-1)$-simplices $\mathsf S_{\mathsf G}=\conv(\mathsf G\cup\set{v})$ with $\mathsf G\in U_{\mathsf F}$,
        and $\mathsf G\mapsto\mathsf S_{\mathsf G}$ is a bijection from $U_{\mathsf F}$ onto this set of facets.
\end{enumerate}
\end{lemma}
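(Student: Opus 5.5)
The plan is to read off the facets of $\mathsf Q$ at $v$ from the vertex figure $\mathsf Q/v$, and to use the fact that a closed pseudomanifold sitting inside the boundary of a polytope of the same dimension must be the whole boundary. No beneath–beyond case analysis is needed.

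\emph{Step 1: part (i).} By Lemma~\ref{lem:bridge-point}, $v\notin\mathsf P$. The point $v$ is therefore not a convex combination of points of $\mathsf P$, so it is a vertex of $\mathsf Q=\conv(\mathsf P\cup\set{v})$.

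\emph{Step 2: each $\mathsf S_{\mathsf G}$ is a facet.} Let $\mathsf G\in U_{\mathsf F}$. By Lemma~\ref{lem:bridge-point-constant}, $v_{\mathsf G}=v$, and Lemma~\ref{lem:bridge-point} gives $v\in\aff\mathsf G$. The hyperplane $\aff\mathsf G$ supports $\mathsf P$ and contains $v$, so it supports $\mathsf Q$. Its intersection with $\mathsf Q$ is
\[
\conv\bigl((\mathsf P\cap\aff\mathsf G)\cup\set{v}\bigr)=\conv(\mathsf G\cup\set{v})=\mathsf S_{\mathsf G}.
\]
This is a $(d-1)$-simplex by Lemma~\ref{lem:bridge-point}, hence a facet of $\mathsf Q$ containing $v$. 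Distinct facets $\mathsf G\ne\mathsf G'$ of $\mathsf P$ have distinct affine hulls, so $\mathsf S_{\mathsf G}\ne\mathsf S_{\mathsf G'}$. Thus $\mathsf G\mapsto\mathsf S_{\mathsf G}$ is injective.

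\emph{Step 3: the simplices close up.} In $\mathsf S_{\mathsf G}$, the ridges through $v$ are the $d-1$ simplices $\conv(\set{v}\cup\mathsf t)$, where $\mathsf t$ runs over the facets of $\mathsf T_0(\mathsf G)$.

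Each such $\mathsf t$ is $\mathsf T_0(\mathsf G)\cap\mathsf R$ for a unique prismatical ridge $\mathsf R$ of $\mathsf G$. Let $\mathsf G'$ be the facet sharing $\mathsf R$ with $\mathsf G$; then $\mathsf G'\in U_{\mathsf F}$. By Remark~\ref{rem:intrinsic-prism-structure}, $\mathsf R$ is an intrinsic prism whose bridge edges are bridge edges of both $\mathsf G$ and $\mathsf G'$, and these edges lie on rays from $v$. Hence $\conv(\mathsf R\cup\set{v})=\conv(\mathsf t\cup\set{v})$ is a common $(d-2)$-face of $\mathsf S_{\mathsf G}$ and $\mathsf S_{\mathsf G'}$.

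One point needs care: the labelling of the two simplicial ridges must be compatible across $\mathsf R$. That is, $\mathsf t$ must be the end of $\mathsf R$ far from $v$ for $\mathsf G'$ as well. I would check this using the same ray argument as in Lemma~\ref{lem:bridge-point} applied to the bridge edges of $\mathsf R$: the endpoint farther from $v$ is determined by $\mathsf R$ alone.

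Passing to the vertex figure $\mathsf Q/v$, a $(d-1)$-polytope, the facets $\mathsf S_{\mathsf G}/v\cong\mathsf T_0(\mathsf G)$ for $\mathsf G\in U_{\mathsf F}$ form a pure $(d-2)$-dimensional subcomplex of $\mathcal B(\mathsf Q/v)$. In this subcomplex every ridge lies in at least two, hence exactly two, of its facets.

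\emph{Step 4: they are all the facets.} $\mathcal B(\mathsf Q/v)$ is a $(d-2)$-sphere, so its dual graph is connected, and each of its ridges lies in exactly two facets. Starting from any $\mathsf S_{\mathsf G}/v$ and crossing ridges, we never leave the subcomplex, because the second facet at each ridge is again of the form $\mathsf S_{\mathsf G'}/v$. Hence every facet of $\mathsf Q/v$ is of this form. This gives (ii): the facets of $\mathsf Q$ containing $v$ are exactly the $\mathsf S_{\mathsf G}$ with $\mathsf G\in U_{\mathsf F}$, and by Step~2 the correspondence is bijective.

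The main obstacle is Step 3: verifying that the face $\conv(\mathsf R\cup\set{v})$ is genuinely shared by both simplices, with the consistent orientation (the $\mathsf T_0$ versus $\mathsf T_1$ labelling) described there.
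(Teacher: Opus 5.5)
Your proposal is correct and follows essentially the same route as the paper: each $\mathsf S_{\mathsf G}$ with $\mathsf G\in U_{\mathsf F}$ is shown to be a facet of $\mathsf Q$ through $v$, each ridge $\conv(\mathsf R\cup\set{v})$ of $\mathsf S_{\mathsf G}$ at $v$ is shared with $\mathsf S_{\mathsf G'}$ for the facet $\mathsf G'\in U_{\mathsf F}$ across $\mathsf R$, and connectivity of the dual graph of $\mathsf Q/v$ finishes the argument. The orientation check you flag in Step 3 is not actually needed: $\mathsf R\subseteq\mathsf G'$ already gives $\conv(\mathsf R\cup\set{v})\subseteq\mathsf S_{\mathsf G'}$, so this $(d-2)$-face of $\mathsf S_{\mathsf G}$ lies in two distinct facets of $\mathsf Q$.
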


\begin{proof}
(i) By Lemma~\ref{lem:bridge-point}, $v\notin\mathsf P$, so $v$ is a vertex of $\conv(\mathsf P\cup\set{v})$.

(ii) Let $\mathsf G\in U_{\mathsf F}$.
By Lemma~\ref{lem:bridge-point-constant} we have $v\in\aff\mathsf G$, so the supporting hyperplane $\aff\mathsf G$ of $\mathsf P$ also supports $\mathsf Q$, and \[
  \mathsf Q\cap\aff\mathsf G =\conv\paren*{(\mathsf P\cap\aff\mathsf G)\cup\set{v}}
  =\conv(\mathsf G\cup\set{v})
  =\mathsf S_{\mathsf G},
\]
which by Lemma~\ref{lem:bridge-point} is a $(d-1)$-simplex and hence a facet of $\mathsf Q$
containing $v$.
Distinct facets of $\mathsf P$ have distinct affine hulls, so $\mathsf G\mapsto\mathsf S_{\mathsf G}$ is injective.

It remains to show that there are no further facets of $\mathsf Q$ through $v$.
Consider the vertex figure $\mathsf Q/v$, a $(d-1)$-polytope whose facets correspond to the facets of $\mathsf Q$ containing $v$ and whose ridges correspond to the ridges of $\mathsf Q$ containing $v$; its dual graph is connected.
Fix $\mathsf G\in U_{\mathsf F}$.
The facets of the simplex $\mathsf S_{\mathsf G}$ that contain $v$ are the $d-1$ sets $\conv(\mathsf R\cup\set{v})$ with $\mathsf R$ a prismatical ridge of $\mathsf G$; each of them is contained in $\mathsf S_{\mathsf G'}$, where $\mathsf G'$ is the other facet of $\mathsf P$
containing $\mathsf R$, and $\mathsf G'\in U_{\mathsf F}$.
Hence, in the dual graph of $\mathsf Q/v$, all $d-1$ neighbours of the vertex corresponding to $\mathsf S_{\mathsf G}$
already correspond to facets $\mathsf S_{\mathsf G'}$ with $\mathsf G'\in U_{\mathsf F}$.
Thus $\set{\mathsf S_{\mathsf G}:\mathsf G\in U_{\mathsf F}}$ is a union of connected components of the dual graph of $\mathsf Q/v$; as that graph is connected, it is the whole vertex set.
\end{proof}

\begin{corollary}[Separation]\label{cor:separation}
Let $\mathsf F$ be a facet of $\mathsf P$, let $v:=v_{\mathsf F}$, and let $\mathsf Q:=\conv(\mathsf P\cup\set{v})$.
Then the set of facets of $\mathsf P$ not in $U_{\mathsf F}$ is the disjoint union of the set $A_{\mathsf F}$ of facets visible from $v$ and the set $B_{\mathsf F}$ of facets nonvisible from $v$.
Both sets are nonempty and no ridge of $\mathsf P$ is contained in a facet of $A_{\mathsf F}$ and in a facet of $B_{\mathsf F}$.
In particular, deleting $U_{\mathsf F}$ disconnects $G^*(\mathsf P)$.
\end{corollary}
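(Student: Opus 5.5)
We use the standard description of $\mathsf Q=\conv(\mathsf P\cup\set{v})$ for a point $v\notin\mathsf P$. A facet $\mathsf G$ of $\mathsf P$ is \emph{visible} from $v$ if $v$ lies strictly beyond $\aff\mathsf G$. It is \emph{nonvisible} if $v$ lies strictly beneath it, and it is of the third kind if $v\in\aff\mathsf G$. We first identify the third kind with $U_{\mathsf F}$. By Lemma~\ref{lem:bridge-point-constant}, every $\mathsf G\in U_{\mathsf F}$ has $v\in\aff\mathsf G$. Conversely, if $v\in\aff\mathsf G$ for some facet $\mathsf G$, then $\aff\mathsf G$ supports $\mathsf Q$ and $\mathsf Q\cap\aff\mathsf G=\conv(\mathsf G\cup\set{v})$. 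This is a facet of $\mathsf Q$ containing $v$, so by Lemma~\ref{lem:cone-over-component}(ii) it equals $\mathsf S_{\mathsf G'}$ for some $\mathsf G'\in U_{\mathsf F}$. Taking affine hulls gives $\aff\mathsf G=\aff\mathsf G'$, hence $\mathsf G=\mathsf G'\in U_{\mathsf F}$. Therefore the facets not in $U_{\mathsf F}$ split disjointly into the visible set $A_{\mathsf F}$ and the nonvisible set $B_{\mathsf F}$.

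\emph{Nonemptiness.} Since $v\notin\mathsf P$, some facet inequality of $\mathsf P$ is strictly violated at $v$, so at least one facet is visible; it is not in $U_{\mathsf F}$, because $v$ lies on the hyperplanes of those facets. So $A_{\mathsf F}\neq\varnothing$. For $B_{\mathsf F}$, recall that the facets of $\mathsf Q$ are the cones $\mathsf S_{\mathsf G}$ through $v$ together with the nonvisible facets of $\mathsf P$. If $B_{\mathsf F}$ were empty, every facet of $\mathsf Q$ would contain $v$, which is impossible for a $d$-polytope. Hence $B_{\mathsf F}\neq\varnothing$.

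\emph{No shared ridge.} Suppose a ridge $\mathsf R$ lies in $\mathsf G_1\in A_{\mathsf F}$ and $\mathsf G_2\in B_{\mathsf F}$. Then $\mathsf R$ is a ridge on the boundary between the visible and nonvisible regions of $\partial\mathsf P$ as seen from $v$. By the standard beneath–beyond theorem (see \cite{Gru03}), $\conv(\mathsf R\cup\set{v})$ is then a facet of $\mathsf Q$ containing $v$. By Lemma~\ref{lem:cone-over-component}(ii), it must equal some $\mathsf S_{\mathsf G}$ with $\mathsf G\in U_{\mathsf F}$, so $\mathsf R\subseteq\aff\mathsf G\cap\mathsf P=\mathsf G$. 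Thus the ridge $\mathsf R$ lies in three distinct facets $\mathsf G_1,\mathsf G_2,\mathsf G$ of $\mathsf P$, contradicting that every ridge lies in exactly two facets. This step is the main point requiring care. If one prefers to avoid beneath–beyond, argue instead that the hyperplane $\aff(\mathsf R\cup\set{v})$ separates $\mathsf P$ from $v$ locally: $v$ is beyond $\mathsf G_1$ and beneath $\mathsf G_2$, so $\conv(\mathsf R\cup\set{v})$ is a supporting cell of $\mathsf Q$.

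\emph{Disconnection.} Every edge of $G^*(\mathsf P)$ corresponds to a shared ridge. After deleting $U_{\mathsf F}$, no edge joins $A_{\mathsf F}$ to $B_{\mathsf F}$, and both sets are nonempty. Hence $G^*(\mathsf P)-U_{\mathsf F}$ is disconnected.
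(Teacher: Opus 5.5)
Your proof is correct and follows the same outline as the paper's. You identify the facets with $v\in\aff\mathsf G$ as exactly $U_{\mathsf F}$ via Lemma~\ref{lem:cone-over-component}(ii), and you reach the same contradiction (a ridge lying in three facets) once $\conv(\mathsf R\cup\set{v})$ is known to be a facet of $\mathsf Q$.

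The differences are local:
\begin{itemize}
\item You get $B_{\mathsf F}\neq\varnothing$ from the fact that no vertex of a $d$-polytope lies on all its facets. The paper instead looks at the facet of $\mathsf Q$ across the ridge $\mathsf T_0(\mathsf G)$ of $\mathsf S_{\mathsf G}$.
\item You cite the beneath--beyond theorem for $\conv(\mathsf R\cup\set{v})$ being a facet. The paper proves this case directly: it moves the hyperplane through $\aff\mathsf R$ from $\aff\mathsf G_A$ to $\aff\mathsf G_B$ (convex combinations of the two facet inequalities) until it passes through $v$, by the intermediate value theorem. This is also the precise form of your ``separates locally'' fallback. Since $v$ lies on $\aff(\mathsf R\cup\set{v})$, what you actually need is that this hyperplane supports $\mathsf Q$.
\end{itemize}
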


\begin{proof}

Let $\mathsf G$ be a facet of $\mathsf P$ with $\mathsf G\notin U_{\mathsf F}$.
If $v\in\aff\mathsf G$, then, as in the proof of Lemma~\ref{lem:cone-over-component}(ii),
$\conv(\mathsf G\cup\set{v})$ would be a facet of $\mathsf Q$ containing $v$, hence equal to some $\mathsf S_{\mathsf G'}$ with $\mathsf G'\in U_{\mathsf F}$; comparing affine hulls gives $\mathsf G=\mathsf G'$, a contradiction.
So $v\notin\aff\mathsf G$ and $\mathsf G$ is either visible or nonvisible from $v$.
Note also that if $\mathsf G$ is nonvisible from $v$, then $\aff\mathsf G$ supports $\mathsf Q$ and $\mathsf Q\cap\aff\mathsf G=\mathsf G$, so $\mathsf G$
is a facet of $\mathsf Q$.

Since $v\notin\mathsf P$, at least one facet of $\mathsf P$ is visible from $v$, so $A_{\mathsf F}\ne\varnothing$.
For $B_{\mathsf F}$, fix $\mathsf G\in U_{\mathsf F}$ and put $\mathsf T:=\mathsf T_0(\mathsf G)$, a ridge of $\mathsf Q$ with $v\notin\mathsf T$.
Let $\mathsf W$ be the facet of $\mathsf Q$ containing $\mathsf T$ other than $\mathsf S_{\mathsf G}$.
If $v\in\mathsf W$ then $\mathsf W=\mathsf S_{\mathsf G''}$ for some $\mathsf G''\in U_{\mathsf F}$, whence $\mathsf S_{\mathsf G}=\conv(\mathsf T\cup\set{v})\subseteq\mathsf W$ and $\mathsf S_{\mathsf G}=\mathsf W$, a contradiction.
So $v\notin\mathsf W$; writing $\mathsf W=\mathsf Q\cap H$ for a supporting hyperplane $H$ of $\mathsf Q$ with $v\notin H$, we get $\mathsf W=\mathsf P\cap H$, a facet of $\mathsf P$ nonvisible from $v$.
As $v\notin\aff\mathsf W$, we have $\mathsf W\notin U_{\mathsf F}$, so $B_{\mathsf F}\ne\varnothing$.

Suppose now that a ridge $\mathsf R$ of $\mathsf P$ is contained in $\mathsf G_A\in A_{\mathsf F}$ and in $\mathsf G_B\in B_{\mathsf F}$.
Let $h_A(x)\le \beta_A$ and $h_B(x)\le\beta_B$ be the facet inequalities of $\mathsf G_A$ and $\mathsf G_B$, so that $h_A(v)>\beta_A$ and $h_B(v)<\beta_B$.
For $\lambda\in(0,1)$ the inequality $\lambda h_A+(1-\lambda)h_B\le\lambda\beta_A+(1-\lambda)\beta_B$ is valid on $\mathsf P$ with equality set $\mathsf G_A\cap\mathsf G_B=\mathsf R$.
By the intermediate value theorem there is $\lambda^*\in(0,1)$ for which $v$ satisfies this inequality with equality; the corresponding hyperplane supports $\mathsf Q$ and meets it in $\conv(\mathsf R\cup\set{v})$, which has dimension $d-1$ because $v\notin\aff\mathsf G_A \supseteq\aff\mathsf R$.
Hence $\conv(\mathsf R\cup\set{v})$ is a facet of $\mathsf Q$
containing $v$, so by Lemma~\ref{lem:cone-over-component} it equals $\mathsf S_{\mathsf G}$ for some $\mathsf G\in U_{\mathsf F}$.
Then $\mathsf R\subseteq\mathsf P\cap\aff\mathsf S_{\mathsf G}=\mathsf G$, so $\mathsf R$ is a ridge of $\mathsf G$ and $\mathsf G\in\set{\mathsf G_A,\mathsf G_B}$, contradicting $\mathsf G\in U_{\mathsf F}$.

\end{proof}

\begin{theorem}\label{thm:no-prismatical-high-dim}
For every $d\ge5$ the class $\mathscr P_d$ is empty; that is, there is no $d$-polytope all of whose facets are simplicial prisms.
\end{theorem}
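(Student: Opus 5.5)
The plan is to derive a contradiction from Corollary~\ref{cor:separation} alone, by an extremal argument over the connected components of the prismatical dual graph $G^*_{\Pi}(\mathsf P)$. Suppose $\mathsf P\in\mathscr P_d$ with $d\ge5$ and normalise coordinates as in Lemma~\ref{lem:bridge-point}. The components of $G^*_{\Pi}(\mathsf P)$ are exactly the sets $U_{\mathsf F}$. For each such component $U$, Corollary~\ref{cor:separation} gives a partition of the remaining facets into two nonempty sets $A_U$ and $B_U$ with no ridge shared between a facet of $A_U$ and a facet of $B_U$. The proof rests on three structural facts about these sides, followed by a minimality argument.

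First, I will show that every component $U'\neq U$ lies entirely in $A_U$ or entirely in $B_U$. Any two facets of $U'$ are joined by a path of prismatical ridges avoiding $U$, and no ridge crosses between $A_U$ and $B_U$.

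Second, I will show that $U\cup A_U$ and $U\cup B_U$ each induce connected subgraphs of $G^*(\mathsf P)$. Take a facet $\mathsf G\in A_U$ and a shortest path in $G^*(\mathsf P)$ from $\mathsf G$ to $U$, which exists since the dual graph is connected. Before first reaching $U$, this path cannot cross from $A_U$ into $B_U$, so it stays in $A_U$. Combined with the connectivity of $U$ itself, this gives connectivity of $U\cup A_U$, and symmetrically of $U\cup B_U$. As a consequence, every component contained in $A_U$ (respectively $B_U$) is joined, through facets of the same side, to $U$. In particular, each side contains a component that is directly adjacent in $G^*(\mathsf P)$ to some facet of $U$.

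Third, the extremal step. Put $s(U):=\min\set{|A_U|,|B_U|}$, which is at least $1$, and choose a component $U$ minimising $s(U)$. Let $X$ be a side with $|X|=s(U)$ and let $Y$ be the other side. By the second step, choose a component $U'\subseteq X$ that is adjacent to $U$. The set $U\cup Y$ is connected and disjoint from $U'$, so it lies within a single side of $U'$. It meets that side, because $U$ is adjacent to $U'$ and hence not all of $U\cup Y$ can be separated from $U'$'s neighbourhood incorrectly; more precisely, $U\cup Y$ avoids $U'$, so by the partition in Corollary~\ref{cor:separation} it lies inside $A_{U'}$ or inside $B_{U'}$. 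The opposite side of $U'$ is therefore disjoint from $U\cup Y\cup U'$, so it is contained in $X\setminus U'$. This gives $s(U')\le |X|-|U'|<s(U)$, contradicting the minimality of $s(U)$. Hence no prismatical $d$-polytope exists for $d\ge5$.

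The main obstacle I expect is verifying carefully that the separation in Corollary~\ref{cor:separation} is a genuine partition of all facets outside $U$ into two ridge-disjoint, nonempty classes, for every component $U$. The corollary provides exactly this, so the remaining work is the bookkeeping in the second step: a connected set of facets avoiding $U'$ cannot meet both sides of $U'$, because any path between the two sides avoiding $U'$ would have to contain a ridge shared between a facet of $A_{U'}$ and a facet of $B_{U'}$. Once that is in place, the minimality argument closes without further input.
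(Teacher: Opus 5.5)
Your proof is correct and follows the paper's route. Both arguments use Corollary~\ref{cor:separation} to show that every component of $G^*_{\Pi}(\mathsf P)$ splits the remaining facets into two ridge-disjoint classes, each made of whole components, and then derive a contradiction from finiteness. The only difference is the last step. The paper contracts the components to a connected graph $\Gamma$ in which every vertex would be a cut vertex, which a leaf of a spanning tree rules out; your minimal-smaller-side argument proves the same fact directly in $G^*(\mathsf P)$. Two small clean-ups: you do not need $U'$ to be adjacent to $U$, since any component inside $X$ works, and the garbled sentence before ``more precisely'' can simply be deleted.
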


\begin{proof}
 Suppose that $\mathsf P$ is a prismatical $d$-polytope with $d\ge5$, normalised as in Lemma~\ref{lem:bridge-point}.
Let $U_1,\dots,U_k$ be the vertex sets of the connected components of $G^*_{\Pi}(\mathsf P)$.
Define a graph $\Gamma$ with vertex set $\set{1,\dots,k}$, where distinct vertices $i,j$ are adjacent if some facet in $U_i$ shares a ridge with some facet in $U_j$.
Since $G^*(\mathsf P)$ is connected, $\Gamma$ is connected: replacing each facet along a path in $G^*(\mathsf P)$ from a facet in $U_i$ to a facet in $U_j$ by the index of its component, and suppressing consecutive repetitions, gives a walk from $i$ to $j$ in $\Gamma$.

Fix $j$.
By Corollary~\ref{cor:separation}, the facets outside $U_j$ are partitioned into two nonempty sets $A_j$ and $B_j$, with no edge of $G^*(\mathsf P)$ joining these sets.
Every component $U_i$ with $i\ne j$ lies entirely in one of them; otherwise a path in $G^*_{\Pi}(\mathsf P)$ within $U_i$ would contain an edge joining $A_j$ to $B_j$, a contradiction.

Consequently, the vertices of $\Gamma$ other than $j$ are partitioned into the two nonempty sets \[
 \set{i\ne j:U_i\subseteq A_j},
 \qquad \set{i\ne j:U_i\subseteq B_j}.
\]
No edge of $\Gamma-j$ joins these sets, so $\Gamma-j$ is disconnected.
Since $j$ was arbitrary, deleting any vertex of $\Gamma$ disconnects it.

This is impossible for a finite connected graph.
To see this, choose a spanning tree of $\Gamma$ and a leaf of that tree.
Deleting the leaf leaves a connected tree spanning all the remaining vertices, so deleting the same vertex from $\Gamma$ cannot disconnect it.
This contradiction proves the theorem.
\end{proof}

Theorem~\ref{thm:no-prismatical-high-dim} settles part~(viii) of Theorem~\ref{thm:product-facet-1-characterization} and completes its proof.


\section{Acknowledgements}
\label{sec:acknowledgments} 
The authors thank Raman Sanyal and Florian Rieg for helpful comments.
Florian Rieg brought our attention to the cubical polytope of Example \ref{ex:cubical_prime}.
This research began during Guillermo Pineda Villavicencio’s visit to Federico Castillo and Jos\'e Samper at the Pontificia Universidad Católica de Chile in January 2026.
He thanks them for their hospitality. 

\subsection{Declaration on the use of generative AI and AI-assisted technologies}
\label{subsec:acknowledments-ai} 

During the preparation of this work, the authors used OpenAI's GPT-5.6 Sol \cite{OpenAIChatGPT} and Anthropic's Claude Opus 4.8 \cite{AnthropicClaude} models to assist with proofreading and refining the presentation of parts of the proofs in Section~\ref{sec:prismatical-dim-4} and the Appendix.
GPT-5.6 Sol was also used to generate SageMath \cite{SAGE} code supporting the construction and verification of the  families of prismatical $4$-polytopes in Section~\ref{subsec:polytope-families}.

\appendix
\section{Prismatical 4-polytopes with 14 facets}

The purpose of this section is to prove the following theorem.

\begin{theorem}\label{thm:no-prime-fourteen}
Every prismatical 4-polytope with fourteen facets is combinatorially isomorphic to a connected sum of three copies of $\Delta^2\times\Delta^2$. 
In particular, there is no connected-sum-prime example. 
\end{theorem}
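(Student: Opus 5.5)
The plan is to mirror the proof of Theorem~\ref{thm:no-prime-ten}. The goal is a triangular ridge containing two simple vertices, so that Lemma~\ref{lem:split} applies. The lemma gives $\mathsf P\cong\mathsf B\cs\mathsf Q$ with $f_3(\mathsf Q)=10$, and then Theorem~\ref{thm:no-prime-ten} identifies $\mathsf Q$ as $\mathsf B\cs\mathsf B$. For the statement ``combinatorially isomorphic to a connected sum of three copies'' I will not claim a unique type. I only need that the result is an iterated connected sum of three copies of $\mathsf B$, together with Proposition~\ref{prop:connected-sum-closure}.

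\textbf{Counting.} Here $f_3=14$, so $\sum_u f_3(u,\mathsf P)=84$ and $f_0\le 21$. Put $n_i$ for the number of vertices lying in $i$ facets. The pigeonhole step needs $2n_4>14$, that is, $n_4\ge 8$. From $\sum_i n_i=f_0$ and $\sum_i i\,n_i=84$ we get
\[
2n_4=\sum_{i\ge 6}(i-6)n_i+6f_0-84 .
\]
By \eqref{eq:loc}, a vertex with $f_3(u,\mathsf P)=p$ forces $f_0\ge (3p+6)/2$. So the vertex $u$ with maximal $p$ (which is at least $6$, since $\mathsf P$ is not simple by Proposition~\ref{prop:smallest-prismatical}) gives $f_0\ge 12$, and $f_0\ge 15$ if $p\ge8$. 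With $p=8$ the identity gives $2n_4\ge 2n_8+6\cdot15-84\ge 8$, so $n_4\ge4$; for larger $p$ the bound improves. When $f_0\ge 15$ we get $2n_4\ge 6$, which is not enough in general. The counting therefore decides the case $f_0\ge 19$ directly. It fails when $f_0$ is between $12$ and $18$, and that is where the real work lies.

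\textbf{Remaining cases.} I would refine the pigeonhole argument: count only triangles in which simple vertices lie. If no triangular ridge has two simple vertices, then each of the $14$ triangles has at most one simple vertex, so $n_4\le 7$. Hence $84-4n_4\le \sum_{i\ge6}i\,n_i$ with $f_0-n_4$ nonsimple vertices, and combined with $f_0\ge12$ this leaves a finite list of degree profiles $(n_4,n_6,n_8,\dots)$. Since $f_0\le16$ in most of them, \eqref{eq:figs} applies and gives the possible vertex figures and edge multiplicities. For each profile I would study the $14$ triangular ridges. These triangles pair up, via the simplicial dual graph $G^*_\Delta(\mathsf P)$, into a $2$-regular structure: each facet has exactly two triangles, and each triangle lies in two facets. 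So $G^*_\Delta$ is a union of cycles covering the $14$ facets, and each facet contributes $6$ vertex incidences split as $3+3$ across its two triangles. Every vertex lies in $f_3(u,\mathsf P)/2$ triangular ridges, because the vertex figure is simplicial and triangle ridges correspond to certain edges of it. Double counting along the cycles of $G^*_\Delta$ should give a contradiction in most profiles, using the edge-multiplicity data (for example, a vertex with figure $(4,4,4,3,3)$ lies in exactly three triangle ridges).

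\textbf{Main obstacle.} The hard step is the range $12\le f_0\le 16$ with $n_4\le7$, that is, $f_0=12$ or $13$ with many degree-$6$ vertices. There counting alone probably does not suffice. I would try the local reconstruction from Lemma~\ref{lem:split}. Take a simple vertex $b$ on a triangle $[b,x,y]$ with $x,y$ nonsimple, and build out the facets $\mathsf F^\pm$ and the prisms through $b$. Then use the vertex figures of $x$ and $y$ (bipyramid or octahedron) to force $\mathsf P$ either into having a simple pair on some other triangle or into a small configuration contradicting $f_0$. Failing that, I would use the dual $\mathsf P^*$ as in Proposition~\ref{prop:no-eight}. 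Its facets are simple $3$-polytopes, and it has $14$ vertices of degree $5$. Bounding its facet sizes by a degree argument analogous to the six-vertex facet exclusion should rule out the leftover configurations.
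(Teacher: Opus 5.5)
Your reduction is the same as the paper's, and the bound $n_4\le7$ under the contrary assumption is correct. From there on, though, the proposal is a list of hopes rather than an argument, and it is aimed at the wrong cases.

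From $84\ge 4n_4+6(f_0-n_4)$ and $n_4\le7$ you get $f_0\le16$ in every case, not just in most. Moreover, $f_0\in\{12,13\}$ cannot occur at all: for $f_0\le14$, \eqref{eq:loc} forces $f_3(u,\mathsf P)\le6$ at every vertex, so $84\le6f_0$. The profiles that actually survive are:
\begin{itemize}
\item $f_0=14$ with $(n_4,n_6,n_8)=(0,14,0)$;
\item $f_0=15$ with $(3+k,12-2k,k)$ for $0\le k\le4$;
\item $f_0=16$ with $(6,10,0)$ or $(7,8,1)$.
\end{itemize}
None of your tools gets a grip on these. Take $(0,14,0)$. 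There is no simple vertex from which to start a local reconstruction. The triangular-ridge incidence count $14\cdot3=42=14\cdot3$ is consistent, so double counting along the cycles of $G^*_\Delta(\mathsf P)$ yields nothing. In the dual, with fourteen degree-five vertices and only triangular-prism facets, the argument of Proposition~\ref{prop:no-eight} collapses. A six-vertex facet now has twelve outgoing edges spread over eight outside vertices, which gives no contradiction.

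What is missing are the two ingredients the paper builds for exactly this purpose.

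\textbf{A mod-$4$ invariant (Lemma~\ref{lem:match}).} At each vertex $u$, the triangular ridges through $u$ give a perfect matching of \emph{key} edges in the dual facet $\mathsf F_u$. This sorts the six-facet vertices into two types. The nonsimple edges lying in two triangular ridges then form a graph whose nontrivial components are $K_4$, $K_{1,3}$ or $K_2$. The $K_2$-components are in bijection with the simple vertices, whence $n_6=4a+4b+2n_4$. If $n_8=0$, the counts give $n_6-2n_4=126-8f_0\equiv2\pmod4$. This eliminates $(0,14,0)$, $(3,12,0)$ and $(6,10,0)$ at once.

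\textbf{A facet-level bound (Lemma~\ref{lem:fourteen-simple-facet}).} It comes from applying Lemma~\ref{lem:adjacent-simple-vertices} to the simplicial vertex figures. A facet contains at most two simple vertices, and at most one if some vertex of it has three nonsimple edges in it. For $n_8>0$ this sharpens your bound to $n_4\le6$. That pins down $f_0=15$ and $(n_4,n_6,n_8)=(3+k,12-2k,k)$ with $1\le k\le3$.

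The paper then finishes with a parity count of nonsimple edge--vertex incidences inside each facet. It combines this with the observation that the edges lying in five facets form a $2$-regular graph on the stacked vertices, so there are either none or exactly three of them. Without substitutes for these two lemmas your plan does not close.
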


For the proof of Theorem~\ref{thm:no-prime-fourteen}, we first establish three auxiliary lemmas.  

\begin{lemma}\label{lem:match} Let $\mathsf P$ be a prismatical 4-polytope, and put $n_i:=\#\{u\in\Fc_0(\mathsf P):f_3(u,\mathsf P)=i\}$.
Suppose that every vertex of $\mathsf P$ is incident with either four or six facets and that no triangular ridge has all three vertices simple.
Then $n_6\ge2n_4$ and $n_6-2n_4\equiv0\pmod4$.
\end{lemma}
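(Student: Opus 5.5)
The plan is to show that simple vertices are pairwise nonadjacent, deduce the inequality by counting edges between simple and nonsimple vertices, and then attack the congruence, which I expect to be the hard part.

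\emph{Step 1: no two simple vertices are adjacent.} First dispose of $\mathsf P\cong\mathsf B$: there every vertex is simple, so every triangular ridge has three simple vertices, which the hypothesis excludes. So we may assume $f_3\ge 10$ by Lemma~\ref{lem:dim4-basic} and Proposition~\ref{prop:no-eight}. The proof of Lemma~\ref{lem:split} shows that if a triangular ridge has two simple vertices, then its third vertex is simple too. Hence, under our hypothesis, every triangular ridge contains at most one simple vertex.

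Next look at a simple vertex $u$. Its vertex figure is a tetrahedron whose four triangles correspond to the four prisms at $u$. Each prism at $u$ has exactly one triangular $2$-face at $u$, and each triangular ridge lies in two facets. So the triangular $2$-faces at $u$ correspond to two opposite edges of the tetrahedron, and these cover all four of its vertices. Thus every edge at $u$ lies in a triangular ridge through $u$. By the previous paragraph, no neighbour of a simple vertex is simple.

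\emph{Step 2: the inequality $n_6\ge 2n_4$.} Count the edges joining a simple vertex to a nonsimple one; by Step 1 there are exactly $4n_4$ of them. Every edge at a simple vertex lies in exactly three facets. At a vertex $u$ with $f_3(u,\mathsf P)=6$, the vertex figure is a triangular bipyramid with degree sequence $(4,4,4,3,3)$ by \eqref{eq:figs}. Since $f_3(\mathsf e,\mathsf P)$ is the degree in $\mathsf P/u$, only the two apex edges of $u$ lie in three facets. So each nonsimple vertex has at most two simple neighbours, giving $4n_4\le 2n_6$.

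\emph{Step 3: the congruence.} Refining the count, let $s$ be the number of edges in exactly three facets that join two nonsimple vertices. Each nonsimple vertex has exactly two such three-facet edges, so $2n_6=4n_4+2s$, that is, $s=n_6-2n_4$. This reduces the congruence to showing $s\equiv 0\pmod 4$.

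The subgraph $\Sigma$ of three-facet edges is $2$-regular on nonsimple vertices, so its components are cycles; the two apex edges at $u$ continue through the triangular ridges in a controlled way. The approach I would try first is to show that $\Sigma$ decomposes into closed ``apex chains'' that alternate between segments through simple vertices and runs of nonsimple-to-nonsimple edges. For this I would use the facet-ridge structure (each apex edge of $u$ lies in the three prisms around one apex of the bipyramid) together with the counts $4n_4+6n_6=6f_3$, $2n_4+3n_6=3f_3$ and the fact that $f_3$ is even. The aim is to show that nonsimple-to-nonsimple runs come in pairs of even length.

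This parity step is the main obstacle, and I am not confident the cycle analysis closes without a more careful local case study around triangular ridges with one simple vertex.
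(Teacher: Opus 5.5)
Your Steps 1 and 2 are correct. Step 2 is a more direct proof of $n_6\ge 2n_4$ than the paper's, which gets both conclusions at once from the identity $n_6=4a+4b+2n_4$. The identity $s=n_6-2n_4$ in Step 3 is also right. The gap is the congruence: it is half of the statement, you do not prove it, and the ingredients you list cannot prove it. The relations $4n_4+6n_6=6f_3$ with $f_3$ even, the bound $3f_3(u,\mathsf P)\le 2f_0-6$, and $n_6\ge 2n_4$ all hold for $(n_4,n_6,f_3)=(0,14,14)$ and $(3,12,14)$. In both cases $s\equiv 2\pmod 4$, and these are precisely the cases where the proof of Theorem~\ref{thm:no-prime-fourteen} needs the congruence. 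Nor is the congruence a property of individual cycles or runs. When $n_4>0$ the components of $\Sigma$ need not be cycles, since simple vertices have degree $4$ in $\Sigma$. Moreover, in the $12$-facet polytope $\mathsf P_3$ from the proof of Corollary~\ref{cor:prime-spectrum}, every vertex lies in six facets and $\Sigma$ is a disjoint union of four triangles, so $s=12$ even though every cycle has odd length.

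What is missing is a local classification at six-facet vertices. Each prism through such a vertex $u$ has exactly one triangular $2$-face at $u$. So the edges of the bipyramid $\mathsf P/u$ coming from triangular ridges meet each of its six triangles exactly once; the paper phrases this as key edges forming a perfect matching of the dual facet $\mathsf F_u$. Up to symmetry there are two patterns: the three equatorial edges (type $A$), or one equatorial edge plus the two apex edges to the third equatorial vertex $x$ (type $B$). It follows that the two three-facet edges at a type $A$ vertex lie in no triangular ridge. At a type $B$ vertex they lie in the two triangular ridges through $ux$, the unique edge at $u$ in two triangular ridges. Every edge at a simple vertex lies in exactly one triangular ridge. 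Hence three-facet edges at type $A$ vertices join type $A$ vertices, and there are exactly $n_A$ of them, where $n_A$ is the number of type $A$ vertices.

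The rest needs the paper's analysis of the graph $\mathsf K$ of nonsimple edges lying in two triangular ridges. The three equatorial neighbours of a type $A$ vertex are either all type $A$ (a $K_4$-component) or all type $B$ (a $K_{1,3}$-component, whose leaves are pairwise joined by three-facet edges). If the $\mathsf K$-neighbour of a type $B$ vertex $v$ is also type $B$, both triangular ridges through that $\mathsf K$-edge have a simple third vertex, so both three-facet edges at $v$ go to simple vertices. With $a$ and $b$ the numbers of $K_4$- and $K_{1,3}$-components, this gives $n_A=4a+b$ and $s=n_A+3b=4(a+b)$, which is the paper's $n_6=4a+4b+2n_4$. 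Your proposal has neither the type $A$/$B$ dichotomy nor this component structure, and without them nothing produces the factor $4$.
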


\begin{proof}
For an edge $\mathsf e$ of $\mathsf P$,  let $\tau(\mathsf e)$ be the number of triangular ridges containing $\mathsf e$.
For a vertex $u$ of $\mathsf P$, let $\mathsf F_u$ be the facet of the dual polytope $\mathsf P^*$ conjugate to $u$.
Then $\mathsf F_u$ is isomorphic to $(\mathsf P/u)^*$, so $\mathsf F_u$  is simple.
Moreover, $\mathsf F_u$ has $f_3(u,\mathsf P)$ vertices: it is a tetrahedron when $f_3(u,\mathsf P)=4$ and a triangular prism when $f_3(u,\mathsf P)=6$.

Call an edge of $\mathsf F_u$ \emph{key} if its conjugate edge in $\mathsf P/u$ corresponds to a triangular ridge of $\mathsf P$ containing $u$.
Let $v$ be a vertex of $\mathsf F_u$,
and let $\mathsf I_v$ be the corresponding prism facet of $\mathsf P$.
The edges of $\mathsf F_u$ incident with $v$ correspond to the ridges of $\mathsf I_v$ containing $u$.
Since every vertex of a 
prism lies in exactly one of its two triangular faces, precisely one of these ridges is triangular.
Hence precisely one key edge is incident with $v$.
Thus the key edges form a perfect matching in $\mathsf F_u$.

For an edge $[u,x]$ of $\mathsf P$, let $\mathsf G_{x}$ be the face of $\mathsf F_u$ conjugate to the corresponding vertex of $\mathsf P/u$.
Then \begin{equation}\label{eq:key}
 \tau([u,x])=\#\{\text{key edges contained in }\mathsf G_x\}.
\end{equation}
If $u$ is simple, $\mathsf F_u$ is a tetrahedron.
Its key edges form a perfect matching,
and each triangular face contains exactly one of them.
Thus all four edges incident with $u$ are simple and have $\tau=1$.
If $u$ is incident with 6 facets in $\mathsf P$, $\mathsf F_u$ is a prism.
Up to automorphism, $G(\mathsf F_u)$  has two perfect matchings: \emph{type $A$}, consisting of the three vertical edges, and \emph{type $B$},
consisting of one vertical edge and one edge in each triangular face; see Figure~\ref{fig:matchings}.
We call $u$ \emph{type $A$} or \emph{type $B$} accordingly.
Formula \eqref{eq:key} gives \begin{table}[htbp]
    \centering $\begin{array}{l|l|l}
 &\text{simple edges}&\text{nonsimple edges}\\ \hline \text{simple vertex}&4\text{ with }\tau=1&\text{none}\\
 \text{type }A&2\text{ with }\tau=0&3\text{ with }\tau=2\\
 \text{type }B&2\text{ with }\tau=1&
   2\text{ with }\tau=1,\ 1\text{ with }\tau=2.
\end{array}$
    \caption{Classification of vertices in $\mathsf P$ according to the matching of key edges in $\mathsf F_u$.}
    \label{tab:key-edge-classification}
\end{table}

For instance, for a type $A$ matching, neither triangular face of $\mathsf F_u$ contains a key edge, while every quadrilateral face contains two.  
\begin{figure}[htbp]             
\begin{center}     
\includegraphics[scale=1]{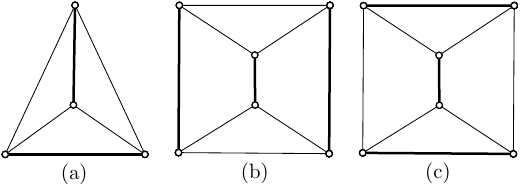}  
\end{center}
\caption{Matchings in $\mathsf F_u$ highlighted in bold. {(a)} A matching in a tetrahedron. (b) A type $A$ matching in the prism $\mathsf F_u$. (c) A type $B$ matching in the prism $\mathsf F_u$.}
\label{fig:matchings}   
\end{figure} 

Let $\mathsf K$ be the spanning subgraph of $G(\mathsf P)$ whose edges are the nonsimple edges with $\tau=2$.
If $u$ is a simple vertex in $\mathsf P$, then $\deg_{\mathsf K}(u)=0$ (see \eqref{eq:key} and Table~\ref{tab:key-edge-classification}).
If  $u$ has type $A$ then $\deg_{\mathsf K}(u)=3$, while  if  $u$ has type $B$ then $\deg_{\mathsf K}(u)=1$; see \eqref{eq:key} and Table~\ref{tab:key-edge-classification}. 

\begin{figure}   
\begin{center}
\includegraphics[scale=1]{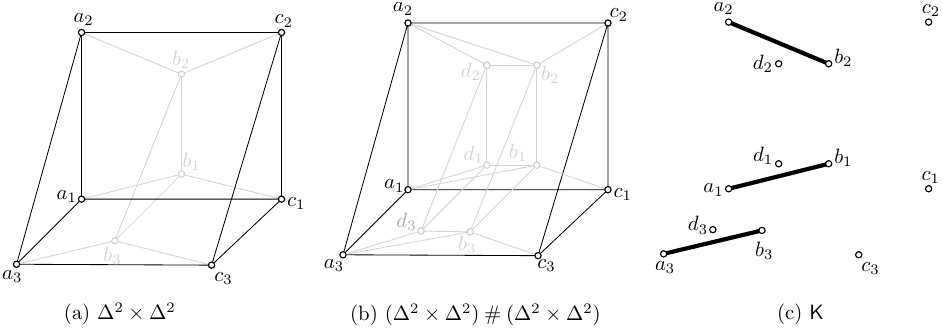}   
\end{center}
\caption{The prismatical 4-polytope $\Delta^2\times \Delta^2$ and connected sums. {(a)} A Schlegel diagram of $\Delta^2\times\Delta^2$.  (b) A Schlegel diagram of the prismatical $4$-polytope $\mathsf P=(\Delta^2\times\Delta^2)\mathbin{\#}(\Delta^2\times\Delta^2)$.
The images in $\mathsf P$ of the two summands have vertex sets   $\{a_1,a_2,a_3,b_1,b_2,b_3,c_1,c_2,c_3\}$ and $\{a_1,a_2,a_3,b_1,b_2,b_3,d_1,d_2,d_3\}$, respectively; the common labels $a_i,b_i$ denote the identifications made along the two prism facets used in forming the connected sum. (c) The spanning subgraph $\mathsf K$ of $G(\mathsf P)$, with its edges highlighted in bold.
Lemma~\ref{lem:match} does not apply to $\mathsf P$, because the triangular ridges $[c_1,c_2,c_3]$ and $[d_1,d_2,d_3]$ have all three vertices simple.}
\label{fig:B-BcsB-subgraphK}   
\end{figure}

We now classify the nontrivial connected components of $\mathsf K$.
Suppose that $u$ has type $A$, and let $v_1,v_2,v_3$ be its three $\mathsf K$-neighbours.
These vertices correspond to the three quadrilateral faces  $\mathsf G_1, \mathsf G_2, \mathsf G_3$ of $\mathsf F_u$.
Each $v_i$ is nonsimple, because $[u,v_i]$ is a nonsimple edge; hence each $v_i$ has type $A$ or type $B$.
For $i\ne j$,  the faces $\mathsf G_i$ and $\mathsf G_j$ intersect in a vertical key edge.
Consequently,  $[u,v_i,v_j]$ is a triangular ridge of $\mathsf P$.
Suppose that one of the $v_i$,  say $v_1$, has type $B$; see Figure~\ref{fig:matchings}(c).
The edge $[u,v_1]$ belongs to $\mathsf K$, so the corresponding face $\mathsf G_u$ of $\mathsf F_{v_1}$ is the unique quadrilateral face containing two key edges.
Each of these key edges is contained, besides $\mathsf G_u$, in one of the two triangular faces of $\mathsf F_{v_1}$.
Hence $[v_1,v_2]$ and $[v_1,v_3]$ are simple.
Since each of these edges lies in a triangular ridge, neither $v_2$ nor $v_3$ can have type $A$.
They therefore have type $B$, and the component containing $u$ is $K_{1,3}$.
If none of $v_1,v_2,v_3$ has type $B$, then all three have type $A$.
For $i\ne j$, the edge $[v_i,v_j]$ lies in the triangular ridge $[u,v_i,v_j]$ and therefore has positive $\tau$-value.
It cannot be a simple edge at a type $A$ vertex, so it is nonsimple and has $\tau=2$.
Thus all three edges $[v_i,v_j]$ belong to $\mathsf K$, and the component containing $u$ is $K_4$.
Finally, every nontrivial component containing no type $A$ vertex is a single edge joining two type $B$ vertices, and hence is a $K_2$-component.
The remaining components of $\mathsf K$ are the isolated simple vertices. 

It remains to count the $K_2$-components in $\mathsf K$.
Let $\mathsf T=[u,v,w]$ be a triangular ridge through a simple vertex $u$.
The edges $[u,v]$ and $[u,w]$ are simple.
By hypothesis, at least one of $v,w$ is nonsimple; say $v$.
Since $[u,v]$ lies in $\mathsf T$, we have $\tau([u,v])=1$.
Hence $v$ cannot have type $A$ and must have type $B$. 

In $\mathsf F_{v}$, the triangular face $\mathsf G_u$  associated with $[v,u]$ intersects the face $\mathsf G_w$ associated with $[v,w]$  in a key edge.
In a type $B$ matching,  the key edge contained in a triangular face is also contained in a quadrilateral face containing two key edges;  see Figure~\ref{fig:matchings}.
Consequently, $\mathsf G_w$ is a quadrilateral and contains two key edges.
Thus  $[v,w]$ is a nonsimple edge with $\tau([v,w])=2$ and is the unique $\mathsf K$-edge incident with $v$.
In particular, $w$ is nonsimple.
Moreover,
$[u,w]$ is simple and has $\tau([u,w])=1$.
Hence $w$ cannot have type $A$ and must have type $B$.
Since a type $B$ vertex has degree one in $\mathsf K$, $[v,w]$  is  a $K_2$-component of $\mathsf K$. 

Conversely, let $[v,w]$ be the edge of a $K_2$-component.
Both $v$ and $w$ have type $B$ and $\tau([v,w])=2$.
Consider either of the two triangular ridges $\mathsf T=[u,v,w]$ containing $[v,w]$.
The  type $B$
matching at $v$ and $w$ shows that $[u,v]$ and $[u,w]$ are simple.
The vertex $u$ cannot have type $A$, because every simple edge at a type $A$ vertex has $\tau=0$,  whereas both $[u,v]$ and $[u,w]$
 have $\tau =1$.
Nor can $u$ have type $B$: in that case the faces $\mathsf G_v$ and $\mathsf G_w$ of $\mathsf F_u$ would be its two triangular faces, which are disjoint, whereas the ridge $\mathsf T$
would require their intersection to be a key edge.
Thus $u$ is simple. 

Consequently, sending $(u,\mathsf T)$ to $(\mathsf e,\mathsf T)$, where $\mathsf e$ is the edge of $\mathsf T$ opposite $u$, gives a bijection between the incidences of simple vertices with triangular ridges and the incidences of $K_2$-components with triangular ridges.
Each simple vertex belongs to exactly two triangular ridges, and the edge of each $K_2$-component belongs to exactly  two triangular ridges.
Hence \[
 2n_4=2\,\#\{K_2\text{-components}\}.
\]
Thus the number of $K_2$-components is $n_4$.
Let $a$ and $b$ be the numbers of $K_4$-components and $K_{1,3}$-components, respectively.
The vertices in $\mathsf P$ incident with six facets are precisely the vertices in the nontrivial components of $\mathsf K$.
Therefore \[
 n_6=4a+4b+2n_4.
\]
Hence $n_6\ge2n_4$ and $n_6-2n_4\equiv0\pmod4$.
\end{proof}

\begin{remark}
Figure~\ref{fig:B-BcsB-subgraphK}(c) shows that the hypothesis of excluding triangular ridges having three simple vertices in Lemma~\ref{lem:match} is essential.
In that case there are two such ridges: $[c_1,c_2,c_3]$ and $[d_1,d_2,d_3]$.
\end{remark}

Let $0\leq s\leq d-1$.
A $d$-polytope is \emph{$s$-simplicial} if every $s$-face is a simplex.

\begin{lemma}\label{lem:adjacent-simple-vertices}
Let $d\ge3$.
A 2-simplicial $d$-polytope with two adjacent simple vertices is a simplex.
\end{lemma}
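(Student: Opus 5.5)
The plan is to trap $\mathsf P$ between two simplicial vertex cones and show that their intersection is a simplex spanned by vertices of $\mathsf P$.

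Let $u$ and $v$ be adjacent simple vertices of the $2$-simplicial $d$-polytope $\mathsf P$. Since $u$ is simple, it has exactly $d$ neighbours, say $v=w_1,w_2,\dots,w_d$. Its vertex figure is a $(d-1)$-simplex, so the vectors $w_i-u$ are linearly independent, and the points $u,w_1,\dots,w_d$ are affinely independent.

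\textbf{Step 1 (combinatorics).} At a simple vertex every pair of incident edges spans a $2$-face, because any two vertices of the simplex $\mathsf P/u$ form an edge of it. So for each $j\ge2$ the edges $uv$ and $uw_j$ lie in a common $2$-face. By $2$-simpliciality this face is the triangle $[u,v,w_j]$, and hence $vw_j$ is an edge of $\mathsf P$. The vertex $v$ is simple, so it has exactly $d$ neighbours; these are therefore precisely $u,w_2,\dots,w_d$. This is the only place where $2$-simpliciality is used, and I expect it to be the step that needs the most care. The point to check is that "simple" really gives a $2$-face through every pair of edges at $u$, which follows from the vertex figure being a simplex.

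\textbf{Step 2 (geometry).} For any vertex $x$, the polytope lies in the cone $x+\operatorname{cone}\{y-x : xy \text{ an edge}\}$, since the tangent cone at a vertex is generated by its edge directions. Let $\Sigma:=\conv\{u,v,w_2,\dots,w_d\}$, which is a $d$-simplex by Step 1, and write points in barycentric coordinates $(\lambda_u,\lambda_v,\lambda_{w_2},\dots,\lambda_{w_d})$ with respect to $\Sigma$.
\begin{itemize}
\item[(a)] The cone at $u$, generated by the edge directions to $v,w_2,\dots,w_d$, is exactly $\{\lambda_v\ge0,\ \lambda_{w_j}\ge0 \text{ for all } j\}$.
\item[(b)] The cone at $v$, generated by the edge directions to $u,w_2,\dots,w_d$ (Step 1), is exactly $\{\lambda_u\ge0,\ \lambda_{w_j}\ge0 \text{ for all } j\}$.
\end{itemize}
Both cones contain $\mathsf P$, and their intersection is $\{\text{all }\lambda\ge0\}=\Sigma$. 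Hence $\mathsf P\subseteq\Sigma$.

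\textbf{Step 3.} All vertices of $\Sigma$ are vertices of $\mathsf P$, so $\Sigma\subseteq\mathsf P$. Combined with Step 2 this gives $\mathsf P=\Sigma$, a $d$-simplex. The hypothesis $d\ge3$ only ensures that the notions are meaningful; in dimension two the conclusion fails trivially for polygons, which are not $2$-simplicial.
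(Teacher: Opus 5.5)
Your proof is correct, and after the shared first step it takes a different route from the paper's. Both arguments begin the same way. Simplicity of $u$ puts $uv$ and $uw_j$ in a common $2$-face. $2$-simpliciality makes that face the triangle $[u,v,w_j]$, so $vw_j$ is an edge. Simplicity of $v$ then shows that its neighbourhood is exactly $\{u,w_2,\dots,w_d\}$.

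\emph{The paper's finish.} It argues combinatorially. If $\mathsf P$ had a further vertex, the $d-1$ vertices $w_2,\dots,w_d$ would separate $\{u,v\}$ from it in $G(\mathsf P)$. This contradicts Balinski's theorem.

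\emph{Your finish.} You argue geometrically. The tangent cones at $u$ and $v$ are simplicial and generated by the edge directions. In barycentric coordinates for $\Sigma$ they are $\{\lambda_v,\lambda_{w_j}\ge 0\}$ and $\{\lambda_u,\lambda_{w_j}\ge 0\}$. Their intersection is exactly $\Sigma$, so $\mathsf P\subseteq\Sigma\subseteq\mathsf P$.

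\emph{Comparison.} Your route is more elementary and self-contained. It needs only that the cone of feasible directions at a vertex is generated by its edges, not the $d$-connectivity of polytope graphs. It also exhibits $\mathsf P$ directly as the convex hull of $u$ and its neighbours. The paper's route, once Balinski's theorem is granted, is a two-line argument that uses only the graph and never touches coordinates.

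Two cosmetic points. That $\Sigma$ is a $d$-simplex follows from your preliminary affine-independence observation, not from Step 1. Your closing remark about $d=2$ conflates hypothesis and conclusion: for polygons it is the $2$-simplicial hypothesis that fails or is undefined, not the conclusion.
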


\begin{proof}
Let $u,v$ be adjacent simple vertices of a 2-simplicial $d$-polytope $\mathsf P$, and let $S$
consist of the $d-1$ neighbours of $u$ other than $v$.
Since $u$ is simple and every $2$-face is triangular, each vertex of $S$ is adjacent to $v$.
As $v$ is also simple, neither $u$
nor $v$ has a neighbour outside $S\cup\set{u,v}$.
If $\mathsf P$ were not a simplex, it would have a vertex outside $S\cup\set{u,v}$,
so $S$ would separate $G(\mathsf P)$.
This contradicts Balinski's theorem \cite[Thm.~4.1.1]{Pin24}.
\end{proof}

Throughout the remainder of this subsection, simplicity of a vertex or edge refers to the ambient $4$-polytope.

\begin{lemma}[Simple vertices of the ambient polytope contained in a facet]
\label{lem:fourteen-simple-facet}
Let $\mathsf P$ be a prismatical $4$-polytope with no triangular ridge whose three vertices are simple, and let $\mathsf F$ be a facet of $\mathsf P$.
The following hold.
\begin{enumerate}[{\rm (i)}]
    \item  Every nonsimple vertex of $\mathsf F$ is incident with at least two nonsimple edges of $\mathsf F$.
\item The facet $\mathsf F$ has at most two simple vertices of $\mathsf P$,
and at most one if some vertex of $\mathsf F$ is incident with three nonsimple edges of $\mathsf F$.
\end{enumerate}
\end{lemma}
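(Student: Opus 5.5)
The plan is to translate everything into vertex figures for (i), and then use a counting argument on the graph of $\mathsf F$ for (ii).

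For (i), let $u$ be a nonsimple vertex of $\mathsf F$. The three edges of $\mathsf F$ at $u$ are two edges of the triangle of $\mathsf F$ through $u$ and one bridge edge. These three edges lie in the common facet $\mathsf F$, so they correspond to the three vertices of a triangular facet of the vertex figure $\mathsf P/u$. As in the proof of Lemma~\ref{lem:dim4-basic}, $\mathsf P/u$ is a simplicial $3$-polytope, and $f_3([u,v],\mathsf P)=\deg_{\mathsf P/u}(\bar v)$. Hence an edge $[u,v]$ is simple exactly when $\bar v$ has degree $3$ in $\mathsf P/u$, that is, when $\bar v$ is a simple vertex of $\mathsf P/u$. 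Suppose two of the three edges were simple. They would give two adjacent simple vertices of the simplicial, hence $2$-simplicial, $3$-polytope $\mathsf P/u$. By Lemma~\ref{lem:adjacent-simple-vertices}, $\mathsf P/u$ would then be a tetrahedron, so $f_3(u,\mathsf P)=4$ and $u$ would be simple, a contradiction. So at most one edge of $\mathsf F$ at $u$ is simple, which gives (i). Note also that a simple vertex has all its edges simple, since each edge at it lies in exactly three of its four facets.

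For (ii), let $S$ be the set of vertices of $\mathsf F$ that are simple in $\mathsf P$, put $s:=\#S$, and let $e(S)$ be the number of edges of $\mathsf F$ with both ends in $S$. Every edge of $\mathsf F$ between $S$ and its complement is simple, because it has a simple endpoint. By (i), each nonsimple vertex of $\mathsf F$ is incident with at most one simple edge of $\mathsf F$, so it is an endpoint of at most one such cut edge. Since the graph of $\mathsf F$ is $3$-regular, this gives the inequality
\[
3s-2e(S)\le 6-s .
\]

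I would finish with a short case check on induced subgraphs of the prism graph. If $s\ge3$, then $S$ cannot contain a triangle of $\mathsf F$: the two triangles are triangular ridges of $\mathsf P$, and the hypothesis forbids three simple vertices on one. A $3$-set of prism vertices that is not a triangle induces at most $2$ edges, so $9-4=5>3$ and the inequality fails. For $s=4,5,6$ the same inequality should fail once one bounds $e(S)$ while forbidding a full triangle in $S$, and checking this is the only mildly fiddly step. Hence $s\le2$.

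For the refinement, suppose some vertex $w$ of $\mathsf F$ has three nonsimple edges in $\mathsf F$. Then $w$ is nonsimple and is adjacent in $\mathsf F$ to no vertex of $S$, so it absorbs no cut edge. The right-hand side therefore drops to $5-s$. With $s=2$ and $e(S)\le1$, the left-hand side is at least $4>3$, a contradiction, so $s\le1$.
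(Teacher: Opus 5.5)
Your proposal is correct and is essentially the paper's argument: part (i) is the same vertex-figure reduction to Lemma~\ref{lem:adjacent-simple-vertices}, and your cut-edge inequality $3s-2e(S)\le 6-s$ repackages the paper's count in the prism graph. There, (i) forces at least three nonsimple vertices, then at least four (exactly three would form a triangle of $\mathsf F$), and the refinement holds because four nonsimple vertices, each needing two nonsimple edges, exactly use up the at most four edges left after deleting two vertices. The case check you left open is immediate: any $4$-set spans at most $4$ edges, so the left side is at least $4>2$, and for $s=5,6$ the set $S$ contains a whole triangle of $\mathsf F$ (for $s=6$ the inequality alone only gives $0\le 0$, so it is the hypothesis, not the inequality, that excludes this case).
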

\begin{proof} (i)
Let $u$ be a nonsimple vertex of $\mathsf F$.
The vertex figure $\mathsf P/u$ is simplicial (Lemma~\ref{lem:prismatical-dual-basics}) and is not a tetrahedron.
The three edges of $\mathsf F$ at $u$ correspond to the three vertices of a triangular facet of $\mathsf P/u$.
A simple edge incident with $u$ corresponds to a vertex of degree three in $\mathsf  P/u$.
Lemma~\ref{lem:adjacent-simple-vertices} therefore shows that at least two of these three edges are nonsimple. 

(ii) By the hypothesis of the lemma, $\mathsf F$ has at least one nonsimple vertex, in which case Part (i) ensures that it has at least three.
If $\mathsf F$ has exactly three nonsimple vertices, they are pairwise adjacent, leaving the opposite triangle in $\mathsf F$ with three simple vertices, contrary to the hypothesis.
Hence $\mathsf F$ has at least four nonsimple vertices, and so at most two simple ones.

Finally, deleting two vertices from a triangular-prism graph leaves at most $9-3-3+1=4$ edges.
If $\mathsf F$ has two simple vertices, delete them.
By Part (i), each of the four remaining vertices is still incident with at least two nonsimple edges, and hence with exactly two.
Thus, if some vertex of $\mathsf F$ is incident with three nonsimple edges of $\mathsf F$, then $\mathsf F$ has at most one simple vertex.
\end{proof}

We are finally ready to prove the nonexistence of connected-sum-prime prismatical 4-polytopes with fourteen facets.

\begin{proof}[Proof of Theorem~\ref{thm:no-prime-fourteen}]
Let $\mathsf P$ have fourteen facets.
By Lemma~\ref{lem:split} and Theorem~\ref{thm:no-prime-ten}, it suffices to find a triangular ridge whose three vertices are simple.
Suppose that no such ridge exists.
Put $n_i:=\#\set{u\in\Fc_0(\mathsf P):f_3(u,\mathsf P)=i}$. 
By \eqref{eq:counts}, $\sum_{u\in \Fc_0(\mathsf P)}  f_3(u,\mathsf P)=84$.

\textbf{Restricting the vertex figures.} Since each simple vertex belongs to four facets, and  each facet contains at most two simple vertices (Lemma~\ref{lem:fourteen-simple-facet}), we find that $4n_4\leq28$, so $n_4\leq7$.

By Lemma~\ref{lem:dim4-basic},   $f_3(u,\mathsf P)$ is even for each vertex $u$, and so is at least six at a nonsimple vertex.
Consequently, 
\[
 \sum_{u\in \Fc_0(\mathsf P)}  f_3(u,\mathsf P)=84\geq4n_4+6(f_0-n_4)=6f_0-2n_4\geq6f_0-14.
\]
Thus $f_0\leq16$, and Lemma~\ref{lem:dim4-basic} gives $f_3(u,\mathsf P)\in\set{4,6,8}$.
Subtracting $4f_0=4n_4+4n_6+4n_8$ from $4n_4+6n_6+8n_8=84$ first and then using $n_4=f_0-n_6-n_8$ yield 
\begin{equation}
\label{eq:fourteen-counts}
 n_6=42-2f_0-2n_8,\qquad n_4=3f_0-42+n_8.
\end{equation}
If $n_8=0$, these equations imply \[
 n_6-2n_4=126-8f_0\equiv2\pmod4,
\]
contrary to Lemma~\ref{lem:match}.
Hence $n_8>0$.

We call a vertex $u$ with $f_3(u,\mathsf P)=8$ \emph{octahedral} or \emph{stacked} according to its vertex figure in \eqref{eq:figs}.
For a facet $\mathsf F$ containing $u$, the nonsimple edges of $\mathsf F$ at $u$ correspond to the vertices of degree greater than three in the associated triangular facet of $\mathsf P/u$.
Reading the triangular facets of the four possible vertex figures gives Table~\ref{tab:nonsimple-edges}.

\begin{table}[htbp]   
\centering \begin{tabular}{c|c}
 \text{Vertex figure at }$u$
 & \text{number of nonsimple edges per facet at }$u$\\ \hline \text{tetrahedron}&0\text{ in all four facets}\\
 \text{triangular bipyramid}&2\text{ in all six facets}\\
 \text{octahedron}&3\text{ in all eight facets}\\
 \text{stacked type}&2\text{ in six facets, }3\text{ in two facets}.
\end{tabular}
   \caption{Nonsimple edges at a vertex $u$ of $\mathsf P$ depending on the vertex figure $\mathsf P/u$.}
    \label{tab:nonsimple-edges}
\end{table}

Figure~\ref{fig:fourteen-nonsimple-edges} depicts the rows in Table~\ref{tab:nonsimple-edges} related to a nonsimple vertex. 

For a stacked vertex $u$, call an incidence $(u,\mathsf F)$ \emph{special} if the three edges of $\mathsf F$ at $u$ are nonsimple.
There are exactly two such incidences.
In Figure~\ref{fig:fourteen-nonsimple-edges}(c), the facets of $\mathsf P$ involved in a special incidence  correspond to  the triangular faces $[v_1,w_1,w_2]$ and $[v_2,w_1,w_2]$ of $\mathsf{P}/u$. 

\begin{figure}[htbp]             
\begin{center}     
\includegraphics[scale=1.1]{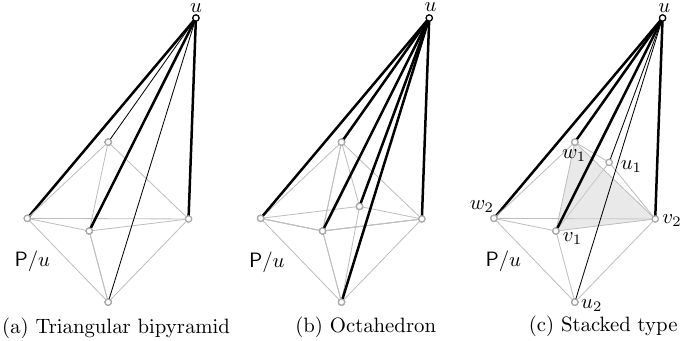}  
\end{center}
\caption{Nonsimple edges at a vertex $u$ of $\mathsf P$ depending on the vertex figure $\mathsf P/u$.
Nonsimple edges are highlighted in bold, simple edges are thin, and edges of a vertex figure are coloured in grey. (a) $\mathsf P/u$ is a triangular bipyramid. (b) $\mathsf P/u$ is an octahedron. (c) $\mathsf P/u$ is of stacked type.
The shaded triangle is the facet of the triangular bipyramid on which the additional vertex is stacked.
In (c), the only triangular facets containing no degree-three vertex are $[v_1,w_1,w_2]$ and $[v_2,w_1,w_2]$, which give the two special incidences in Table~\ref{tab:nonsimple-edges}.}
\label{fig:fourteen-nonsimple-edges}   
\end{figure}

Since $n_8>0$, some facet has a vertex incident with three nonsimple edges of that facet.
By Lemma~\ref{lem:fourteen-simple-facet}, this facet has at most one simple vertex.
Thus $4n_4\leq27$, and hence $n_4\leq6$.
On the other hand, \eqref{eq:loc} at a vertex with $f_3(u,\mathsf P)=8$ gives $f_0\geq15$.
Together with \eqref{eq:fourteen-counts}, we get $n_4=3f_0-42+n_8\le 6$, which forces \begin{equation}
\label{eq:fourteen-reduction}
 f_0=15,\qquad (n_4,n_6,n_8)=(3+k,12-2k,k),\qquad 1\leq k\leq3.
\end{equation}

\textbf{Parity in a facet.} In each facet $\mathsf F$ of $\mathsf{P}$, we count incidences between nonsimple edges and nonsimple vertices.
Each nonsimple edge has precisely two nonsimple endvertices, while each nonsimple vertex is incident with two or three nonsimple edges of $\mathsf F$.
The total number of incidences is therefore even, so an even number of vertices of $\mathsf F$ are incident with three nonsimple edges.
By the preceding table, these vertices are precisely the octahedral vertices of $\mathsf F$ and the stacked vertices whose incidence with $\mathsf F$ is special.
Hence \begin{equation}
\label{eq:fourteen-parity}
 \#\set{\text{octahedral vertices in }\mathsf F}
 +\#\set{\text{special incidences with }\mathsf F}
 \equiv0\pmod2.
\end{equation}
An edge $\mathsf e$ with $f_3(\mathsf e,\mathsf P)=5$ can have only stacked endvertices, and every stacked vertex is incident with exactly two such edges, by \eqref{eq:figs}.
These edges thus form a $2$-regular graph on the stacked vertices.
Since there are at most three vertices with $f_3(u,\mathsf P)=8$ (see \eqref{eq:fourteen-reduction}), either there are no stacked vertices or there are exactly three, forming a triangle. 

\textbf{No stacked vertices.} In this subcase, all $k$ vertices with $f_3(u,\mathsf P)=8$ are octahedral, so every facet contains an even number of them by \eqref{eq:fourteen-parity}.
If $k=1$, this is impossible.
If $k=2$, every facet contains either both or neither.
The two vertices would then belong to exactly the same facets, which is impossible because a vertex is the intersection of all facets containing it.

If $k=3$, by \eqref{eq:fourteen-parity} each facet containing octahedral vertices contains precisely two such vertices.
Their $3\cdot8=24$ incidences therefore occur in twelve facets.
By Table~\ref{tab:nonsimple-edges}, in each of these facets, an octahedral vertex is incident with three nonsimple edges.
Thus, by Lemma~\ref{lem:fourteen-simple-facet}, each of the twelve facets with octahedral vertices has at most one simple vertex, and each of the two remaining facets  has at most two.
Hence, counting incidences between simple vertices and facets, we have   
\[
 4n_4\leq12+2\cdot2=16,
\]
contrary to $n_4=6$ in \eqref{eq:fourteen-reduction}.

As a result, this subcase does not arise.

\textbf{Three stacked vertices, forming a triangle.} Let these vertices be $u,v,w$. 
Each edge  $\mathsf e$ of the triangle $u,v,w$ satisfies $f_3(\mathsf e,\mathsf P)=5$. 
At $u$, the two vertices corresponding to $[u,v]$ and $[u,w]$ are adjacent in $\mathsf P/u$ (these are vertices $v_1$ and $v_2$ in Figure~\ref{fig:fourteen-nonsimple-edges}), so these edges lie in a common ridge of $\mathsf P$. 
That ridge cannot be a quadrilateral, because $v$ and $w$ are adjacent. 
Thus $\mathsf R:=[u,v,w]$ is a triangular ridge.

Consider a facet $\mathsf F$ containing $\mathsf R$. 
At $u$, the corresponding triangular face of its vertex figure $\mathsf P/u$ contains an edge $\mathsf f$ formed by the degree-five vertices in $\mathsf P/u$. 
Each of the two degree-three vertices forms a triangular face of $\mathsf{P}/u$ with the two degree-five vertices; these are precisely the facets containing $\mathsf f$. 
Hence the triangular face of $\mathsf{P}/u$ corresponding to $\mathsf F$ contains a degree-three vertex. 
The same applies to $v$ and $w$. 
Consequently, all three edges of $\mathsf F$ joining $\mathsf R$ to the opposite triangular face $\mathsf R'$ of $\mathsf F$ are simple, and none of the incidences $(u,\mathsf F)$, $(v,\mathsf F)$ and $(w,\mathsf F)$ is special.
If one vertex of $\mathsf R'$ were simple, each of its neighbours in $\mathsf R'$ would be incident with at most one nonsimple edge of $\mathsf F$ and would therefore also be simple by Lemma~\ref{lem:fourteen-simple-facet}(i).
This would make all three vertices of $\mathsf R'$ simple, contrary to our assumption.
Hence both facets containing $\mathsf R$, say $\mathsf F$ and $\mathsf F'$, have no simple vertices.

The six simple vertices require $24$ vertex--facet incidences. 
The twelve facets  of $\mathsf P$ other than $\mathsf F$ and $\mathsf F'$ must therefore each contain exactly two simple vertices, since each facet in $\mathsf P$ contains at most two simple vertices (Lemma~\ref{lem:fourteen-simple-facet}(ii)). 
However, every stacked vertex has a special incidence with a facet not containing $\mathsf R$, and that facet has at most one simple vertex by Lemma~\ref{lem:fourteen-simple-facet}(ii). 
This is a contradiction that rules this subcase out.

As a consequence,  $\mathsf P$ must have a triangular ridge with three simple vertices. 
Lemma~\ref{lem:split} gives $\mathsf P\cong\mathsf B\cs\mathsf Q$, where $\mathsf B=\Delta^2\times\Delta^2$ and $f_3(\mathsf Q)=10$. 
By Theorem~\ref{thm:no-prime-ten}, $\mathsf Q\cong\mathsf B\cs\mathsf B$, completing the proof.
\end{proof}

\begin{remark}
\label{cor:fourteen-two-types}
Up to combinatorial isomorphism, there are exactly two
prism-facetted $4$-polytopes with fourteen facets. Write $\mathsf B=\Delta^2\times\Delta^2$ and let $S$ be the six vertices identified in forming $\mathsf Q:=\mathsf B\#\mathsf B$.  The facet of $\mathsf Q$ used for the third gluing contains three or four vertices of $S$, and these two classes are automorphism orbits. For the resulting polytope $\mathsf P$, put $n_i:=\#\set{u\in\Fc_0(\mathsf P):f_3(u,\mathsf P)=i}$. The resulting types are
distinguished by their numbers of vertices incident with four,
six and eight facets:
\(
 (n_4,n_6,n_8)=(6,6,3)\) and \(
 (n_4,n_6,n_8)=(7,4,4)\).
\end{remark}


\begin{thebibliography}{51}

\bibitem{Sch04}
A.~Schwartz, \emph{Constructions of cubical polytopes}, Ph.D.\ thesis,
  Technische Universit\"at Berlin, 2004.

\bibitem{SchZie04}
A.~Schwartz and G.~M. Ziegler, \emph{Construction techniques for cubical complexes,
  odd cubical $4$-polytopes, and prescribed dual manifolds},
  Experiment. Math. \textbf{13} (2004), no.~4, 385--413.

\bibitem{PerShe67}
M.~A. Perles and G.~C. Shephard, \emph{Facets and nonfacets of convex polytopes},
  Acta Math. \textbf{119} (1967), 113--145.
  
\bibitem{JosZie00}
M.~Joswig and G.~M. Ziegler, \emph{Neighborly cubical polytopes},
  Discrete Comput. Geom. \textbf{24} (2000), no.~2--3, 325--344.

\bibitem{BliBli98}
G.~Blind and R.~Blind, \emph{The almost simple cubical polytopes},
  Discrete Math. \textbf{184} (1998), 25--48.

\bibitem{Bar71}
D.~W. Barnette, \emph{The minimum number of vertices of a simple polytope},
  {Isr. J. Math.} \textbf{10} (1971), 121--125. 


\bibitem{Bar73}
D.~W. Barnette, \emph{A proof of the lower bound conjecture for convex
  polytopes}, {Pac. J. Math.} \textbf{46} (1973), 349--354. 

\bibitem{Bar69}
D.~W. Barnette, \emph{A simple $4$-dimensional nonfacet},
  Israel J. Math. \textbf{7} (1969), 16--20.





 \bibitem{Gru03}
 B.~Gr{{\"u}}nbaum, \emph{Convex polytopes}, 2nd ed., Graduate Texts in
   Mathematics, vol. 221, Springer-Verlag, New York, 2003, Prepared and with a
   preface by V. Kaibel, V. Klee and G. M. Ziegler.

\bibitem{McM71}
P.~McMullen, \emph{The minimum number of facets of a convex polytope}, J.
  London Math. Soc. (2) \textbf{3} (1971), 350--354. 

\bibitem{OpenAIChatGPT}
{OpenAI}, \emph{{ChatGPT} ({GPT-5.6 Sol})}, \url{https://chatgpt.com/}, 2026,
  Large language model, accessed 29 August 2026.

\bibitem{PfePilSan12}
J.~Pfeifle, V.~Pilaud, and F.~Santos, \emph{Polytopality and {C}artesian
  products of graphs}, {Isr. J. Math.} \textbf{192} (2012), no.~1, 121--141.


\bibitem{Pin24}
G.~Pineda-Villavicencio, \emph{Polytopes and graphs}, Cambridge Studies in
  Advanced Mathematics, vol. 211, Cambridge Univ. Press, Cambridge, 2024.

\bibitem{SAGE}
{The Sage Developers}, \emph{Sage mathematics software {(Version 10.9)}}, 2026.


\bibitem{AnthropicClaude}
{Anthropic}, \emph{{Claude (Opus 4.8)}}, \url{https://www.anthropic.com/claude/opus},
  2026, Large language model, accessed 29 August 2026.




  \bibitem{SanZie10}
R.~Sanyal and G.~M. Ziegler, \emph{Construction and analysis of projected
  deformed products}, Discrete Comput. Geom. \textbf{43} (2010), no.~2,
  412--435.



  

\end{thebibliography}
\end{document}